\numberwithin{equation}{section}
\def\ep{{\epsilon}}
\def\p{{\partial}}
\newcounter{marnote}
\begin{document}
\newtheorem{lem}{Lemma}[section]
\newtheorem{rem}{Remark}
\newtheorem{question}{Question}
\newtheorem{prop}{Proposition}
\newtheorem{cor}{Corollary}
\newtheorem{thm}{Theorem}[section]
\newtheorem{definition}{Definition}[section]
\newtheorem{openproblem}{Open Problem}
\newtheorem{conjecture}{Conjecture}

\newenvironment{dedication}
  {
   \vspace*{\stretch{.2}}
   \itshape             
  }
  {
   \vspace{\stretch{1}} 
  }

\title
{Critical mean field equations for equilibrium turbulence with sign-changing prescribed functions
\thanks{
Email:
sunll@xtu.edu.cn (Linlin Sun);
zhuxiaobao@ruc.edu.cn (Xiaobao Zhu)
}
\thanks{Keywords: mean field equation; equilibrium turbulence; variational method; Moser-Trudinger inequality; sign-changing prescribed function}
}
\author{
Linlin Sun\\
School of Mathematics and Computational Science\\
Xiangtan University\\
Xiangtan 411105, P. R. China\\
\\
Xiaobao Zhu\\
School of Mathematics\\
Renmin University of China\\
Beijing 100872, P. R. China\\
}

\date{ }
\maketitle


\begin{abstract}
Let $(M,g)$ be a compact Riemann surface with unit area. We investigate the mean field equation for equilibrium turbulence:
\begin{align}\label{eq-mfe-0}
\begin{cases}
-\Delta u = \rho_1\left(\frac{h_1e^{u}}{\int_Mh_1e^udv_g}-1\right) - \rho_2\left(\frac{h_2e^{-u}}{\int_Mh_2e^{-u}dv_g}-1\right), \\
\int_Mudv_g=0,
\end{cases}
\end{align}
where $\rho_1=8\pi$ and $\rho_2\in(0,8\pi]$ are parameters, and $h_1, h_2$ are smooth functions on $M$ that are positive somewhere. By employing a refined Brezis-Merle type analysis, we establish sufficient conditions of Ding-Jost-Li-Wang type for the existence of solutions to \eqref{eq-mfe-0} in critical cases, particularly when $h_1$ and $h_2$ may change signs. Our results extend Zhou's existence theorems (Nonlinear Anal. 69 (2008), no.~8, 2541--2552) for the case $h_1=h_2\equiv 1$.
 \end{abstract}

\setcounter {section} {0}

\section{Introduction and main theorems}

Let $(M,g)$ be a compact Riemann surface with unit area, $\rho_1, \rho_2 \geq 0$ be parameters, and $h_1, h_2$ be smooth functions on $M$ that are positive somewhere. We study the mean field equation
\begin{align}\label{eq-mfe-1}
\begin{cases}
-\Delta u = \rho_1\left(\frac{h_1e^{u}}{\int_M h_1 e^u dv_g} - 1\right) - \rho_2\left(\frac{h_2e^{-u}}{\int_M h_2 e^{-u} dv_g} - 1\right), \\
\int_M u dv_g = 0,
\end{cases}
\end{align}
where $\Delta$ denotes the Laplace-Beltrami operator and $dv_g$ is the volume form on $(M,g)$.

This equation arises in mathematical physics through the works of Joyce and Montgomery \cite{JM} and Pointin and Lundgren \cite{PL}, where it models the mean field theory for equilibrium turbulence with vortices of arbitrary sign. Further physical background and related results can be found in \cite{Ch,Lions,New} and references therein.

In this paper, we focus on the analytic aspects of equation \eqref{eq-mfe-1}, particularly its existence theory. Before presenting known results, we introduce necessary notations. Let $L^p(M)$ ($p \geq 1$) and $W^{1,q}(M)$ ($q \geq 1$) denote the standard Lebesgue and Sobolev spaces with respect to the volume measure $dv_g$, equipped with norms $\|\cdot\|_p$ and $\|\cdot\|_{1,q}$ respectively. Following convention, we write $H^1(M) = W^{1,2}(M)$ and define
\begin{align*}
    E = \left\{w \in H^1(M) \mid \int_M w dv_g = 0\right\}.
\end{align*}
Given that $h_1$ and $h_2$ are positive somewhere, the admissible space
\begin{align*}
    \mathcal{E} = \left\{w \in E \mid \int_M h_1 e^w dv_g > 0 \text{ and } \int_M h_2 e^{-w} dv_g > 0\right\}
\end{align*}
is nonempty. Note that $\mathcal{E} = E$ when $h_1, h_2 > 0$ on $M$.

Equation \eqref{eq-mfe-1} arises as the Euler-Lagrange equation for the functional
\begin{align*}
    J_{\rho_1,\rho_2}(u) = \frac{1}{2} \int_M |\nabla u|^2 dv_g - \rho_1 \log \int_M h_1 e^{u} dv_g - \rho_2 \log \int_M h_2 e^{-u} dv_g
\end{align*}
defined on $\mathcal{E}$. A fundamental tool is the Moser-Trudinger type inequality established by Ohtsuka and Suzuki \cite{OS} in 2006:
\begin{align}\label{mt-ineq}
    \log \int_M e^{u} dv_g + \log \int_M e^{-u} dv_g \leq \frac{1}{16\pi} \int_M |\nabla u|^2 dv_g + C, \quad \forall u \in E,
\end{align}
where the constant $\frac{1}{16\pi}$ is sharp. This inequality naturally leads to the following classification of \eqref{eq-mfe-1} based on the parameters $(\rho_1,\rho_2)$:

\begin{itemize}
    \item \textbf{Subcritical case}: $\rho_1, \rho_2 \in [0,8\pi)$
    \item \textbf{Critical case}:
    \begin{itemize}
        \item Partial critical case: either $\rho_1 = 8\pi$ with $\rho_2 \in [0,8\pi)$ or $\rho_2 = 8\pi$ with $\rho_1 \in [0,8\pi)$
        \item Full critical case: $\rho_1 = \rho_2 = 8\pi$
    \end{itemize}
    \item \textbf{Supercritical case}: at least one $\rho_i > 8\pi$
\end{itemize}

From the inequality \eqref{mt-ineq}, it follows that in the subcritical case, the functional $J_{\rho_1,\rho_2}$ is both bounded from below and coercive. Standard variational arguments then guarantee the attainment of its infimum.

The two partial critical cases are equivalent through the transformation $u \mapsto -u$ in \eqref{eq-mfe-1}. When $\rho_2 = 0$, equation \eqref{eq-mfe-1} reduces to
\begin{align}\label{eq-kw}
    -\Delta u = 8\pi\left(\frac{h_1e^{u}}{\int_M h_1 e^{u} dv_g} - 1\right),
\end{align}
which has been extensively studied in the literature. On the standard sphere, this becomes the celebrated Nirenberg problem. The existence theory for \eqref{eq-kw} has been developed in \cite{M71,M73,KW74,H86,CD88,CY87,CY88,H90,CL93,St05,CLLX21} and related works.

For general compact Riemann surfaces, this is known as the Kazdan-Warner problem. Let $K$ denote the Gauss curvature of $(M,g)$. Ding-Jost-Li-Wang \cite{DJLW97} pursued a variational approach and established that for strictly positive $h_1$, the condition
\begin{align}\label{cond-djlw}
    \Delta\log h_1(x) + 8\pi - 2K(x) > 0 \quad \forall x \in M,
\end{align}
guarantees the existence of a minimal type solution to \eqref{eq-kw}. This is now referred to as the Ding-Jost-Li-Wang condition. Li-Zhu \cite{LZ19}
gave a new proof based on flow methods. Subsequent work has extended this result in several directions:

\begin{itemize}
    \item For nonnegative $h_1$, Yang and the second author \cite{YZ17} showed that \eqref{cond-djlw} remains sufficient when blow-up at zeros of $h_1$ is excluded. This was later reproved by the first author and Zhu \cite{SZ21} using flow methods.

    \item When $h_1$ changes sign, the condition \eqref{cond-djlw} restricted to the positive set $M_1^+ = \{x \in M : h_1(x) > 0\}$ still ensures existence \cite{SZ24,LX22,WY22,Z24}.
\end{itemize}

In the partial critical case where $\rho_2 \in (0,8\pi)$, as well as in the full critical case, Zhou \cite{Zhou} established an existence result for constant coefficient functions. Specifically, when $h_1 = h_2 \equiv 1$, the condition
\begin{align}\label{zhou-cond-1}
(8\pi - \rho_2) - 2K(x) > 0 \quad \forall x \in M,
\end{align}
guarantees the existence of a minimal type solution to \eqref{eq-mfe-1}.

For the supercritical case, the functional $J_{\rho_1,\rho_2}$ fails to be bounded below due to the inequality \eqref{mt-ineq}, which precludes the existence of minimal type solutions. While we do not address the supercritical case in this work, we refer interested readers to \cite{Zhou,J13,B15} for developments in this direction.

The present paper focuses on the critical case of the mean field equation for equilibrium turbulence with general prescribed functions $h_1$ and $h_2$. Our primary objective is to extend Zhou's existence results to this more general setting, where the coefficient functions are no longer restricted to be constant.

In the partial critical case, we have
\begin{thm}\label{thm-1}
Let $(M,g)$ be a compact Riemann surface with unit area and Gauss curvature $K$, and let $h_1,h_2\in C^2(M)$ be functions that are positive somewhere. For $\rho_2\in(0,8\pi)$, define the positive set $M_1^+ = \{x\in M : h_1(x) > 0\}$. If the following condition holds:
\begin{align}\label{cond-1}
\Delta\log h_1(x) + (8\pi - \rho_2) - 2K(x) > 0 \quad \forall x \in M_1^+,
\end{align}
then the functional $J_{8\pi,\rho_2}$ admits a minimizer $u\in \mathcal{E}$ solving the equation:
\begin{align}\label{eq-1}
-\Delta u = 8\pi\left(\frac{h_1e^{u}}{\int_M h_1 e^u dv_g} - 1\right) - \rho_2\left(\frac{h_2e^{-u}}{\int_M h_2 e^{-u} dv_g} - 1\right).
\end{align}
\end{thm}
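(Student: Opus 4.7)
I would employ a Ding-Jost-Li-Wang type variational strategy combined with a subcritical approximation. Since $\rho_2\in(0,8\pi)$ is strictly subcritical, only the positive bubble $h_1 e^u$ can concentrate with critical mass; accordingly, set $\rho_1^{(k)}=8\pi-1/k$ and use \eqref{mt-ineq} together with direct methods on the nonempty set $\mathcal{E}$ to produce a minimizer $u_k\in\mathcal{E}$ of $J_{\rho_1^{(k)},\rho_2}$. Two alternatives then arise: either $\{u_k\}$ is bounded in $H^1$, in which case weak limits produce a minimizer of $J_{8\pi,\rho_2}$ directly, or $u_k$ blows up in the positive direction. The heart of the proof is to rule out the latter under \eqref{cond-1}.

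In the blow-up scenario, I would carry out a Brezis-Merle / concentration-compactness analysis for the coupled system. Because $\rho_2<8\pi$, the negative exponential $h_2 e^{-u_k}/\int h_2 e^{-u_k}\,dv_g$ stays in $L^p$ for some $p>1$, so $-\rho_2\log\int h_2 e^{-u_k}\,dv_g$ only contributes a finite limit, while $\rho_1^{(k)} h_1 e^{u_k}/\int h_1 e^{u_k}\,dv_g$ should concentrate to $8\pi\delta_p$ at a single point $p$. A key step is to show $p\in M_1^+$: if $h_1(p)\le 0$, the positive mass of $h_1 e^{u_k}$ near $p$ could not sustain the full concentration, contradicting the normalization and the positive mass accounting. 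Granted $p\in M_1^+$, the standard decomposition $u_k = 8\pi G(\cdot,p) + w_k$ with bounded $w_k$ and $G(x,p)=-\tfrac{1}{2\pi}\log\dist(x,p)+A(p)+o(1)$ gives a sharp lower bound of the form
\begin{align*}
\liminf_{k\to\infty} J_{\rho_1^{(k)},\rho_2}(u_k) \;\geq\; -8\pi - 8\pi\log h_1(p) - 32\pi^2 A(p) - \rho_2\log\!\int_M h_2\, e^{-8\pi G(\cdot,p)}\,dv_g - C_{M,g},
\end{align*}
where the last integral is finite since $8\pi\cdot\tfrac{1}{2\pi}=4<\infty$ keeps the singular weight subcritical.

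To contradict this bound, I would pick $x_0\in M_1^+$ achieving the positivity in \eqref{cond-1} and construct the Moser-type bubble
\begin{align*}
\phi_\epsilon(x) = -2\log\bigl(\epsilon^2+\dist(x,x_0)^2\bigr)\chi(x) + 8\pi\bigl(1-\chi(x)\bigr)G(x,x_0) + c_\epsilon,
\end{align*}
with a cutoff $\chi$ supported near $x_0$, normalized to lie in $\mathcal{E}$. Expanding $J_{8\pi,\rho_2}(\phi_\epsilon)$ as $\epsilon\to 0^+$, I expect
\begin{align*}
J_{8\pi,\rho_2}(\phi_\epsilon) = -8\pi - 8\pi\log h_1(x_0) - 32\pi^2 A(x_0) - \rho_2\log\!\int_M h_2\, e^{-8\pi G(\cdot,x_0)}\,dv_g - \tfrac{1}{2}\bigl(\Delta\log h_1(x_0)+8\pi-\rho_2-2K(x_0)\bigr)\epsilon^2\log\tfrac{1}{\epsilon}+O(\epsilon^2).
\end{align*}
The bracketed coefficient is strictly positive by \eqref{cond-1}, so $J_{8\pi,\rho_2}(\phi_\epsilon)$ falls strictly below the lower bound of the previous step for small $\epsilon$, contradicting the (approximate) minimality of $u_k$. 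Hence the blow-up alternative is impossible, $\{u_k\}$ is bounded, and a minimizer exists.

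The main obstacle is the Brezis-Merle step in the sign-changing setting coupled with the negative exponential: I must verify single-bubble concentration with mass exactly $8\pi$ at a point in $M_1^+$ and show that the $-\rho_2\log\int h_2 e^{-u_k}$ contribution passes to the limit with \emph{precisely} the same constant $-\rho_2\log\int h_2 e^{-8\pi G(\cdot,x_0)}\,dv_g$ that appears in the test-function expansion, so the two terms cancel on subtraction. This requires uniform $L^1$-estimates on $h_2 e^{-u_k}$ and control of $u_k - 8\pi G(\cdot,p)$ away from the concentration point, for which I would build on the sign-changing Kazdan-Warner analyses in \cite{SZ24,LX22,WY22,Z24} and adapt Zhou's \cite{Zhou} coupled-case computations to variable coefficients.
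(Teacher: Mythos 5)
Your overall strategy matches the paper's: subcritical approximation $\rho_1 = 8\pi-\epsilon$, Brezis--Merle analysis, a lower bound for the infimum under blow-up, and a test-function upper bound that contradicts it under \eqref{cond-1}. However, there are two genuine gaps that would derail the computation as you have set it up.

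First, the statement that ``$-\rho_2\log\int h_2 e^{-u_k}\,dv_g$ only contributes a finite limit'' because $h_2 e^{-u_k}$ stays in $L^p$ is not free: you must actually prove $-u_\epsilon \leq C$. In the paper this requires showing first that the limit measure $\gamma = 8\pi h_1\mu_1-\rho_2 h_2\mu_2$ satisfies $\gamma(\{x_l\})\geq 4\pi$ at every blow-up point (a barrier/comparison argument in the style of Brezis--Merle, Lemma~\ref{lem-gamma}), which then feeds a one-sided Harnack estimate (Theorem~8.17 of \cite{GT}) to exclude concentration of the negative exponential (Lemma~\ref{lem--uep}). Without this, you cannot assert the single-bubble scenario or the $L^p$ control you rely on.

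Second, and more importantly, the limiting object is \emph{not} $8\pi$ times the Green's function of the Laplacian plus a bounded remainder in the way your decomposition $u_k = 8\pi G(\cdot,p)+w_k$ suggests. The weak $W^{1,s}$-limit $G$ satisfies the \emph{nonlinear} equation $-\Delta G = 8\pi(\delta_{x_1}-1) - \rho_2(\beta_2 h_2 e^{-G}-1)$ with $\beta_2 = 1/\int_M h_2 e^{-G}dv_g$; the negative exponential does not vanish in the limit, it regularizes. As a consequence the lower bound is not of the form $-\rho_2\log\int_M h_2 e^{-8\pi G(\cdot,p)}dv_g - C_{M,g}$: the Dirichlet energy $\int_{M\setminus B_\delta}|\nabla G|^2 dv_g$ produces, via integration by parts against the nonlinear $G$, the extra term $-\tfrac{\rho_2}{2}\beta_2\int_M h_2 G e^{-G}dv_g$ in addition to $-\rho_2\log\int_M h_2 e^{-G}dv_g$, and $A(p)$ is the Robin constant of this nonlinear $G$, not of the linear Green's function. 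Your test function $\phi_\epsilon$ must therefore be glued to this same nonlinear $G$ (and needs the first-order corrections $\lambda(p) r\cos\theta + \nu(p) r\sin\theta$ from its Taylor expansion near $p$) so that the constant terms cancel exactly and the leading residual is $-4\pi\mathcal{N}\,\epsilon^2(-\log\epsilon^2)$ with $\mathcal{N} = \tfrac{1}{4\pi}[\Delta\log h_1(p)+(8\pi-\rho_2)-2K(p)] + \tfrac{1}{4\pi}[(b_1+\lambda+k_1)^2+(b_2+\nu+k_2)^2]>0$. With a test function built from the linear Green's function, the constants on the two sides will not match, and there is no reason the difference should have a sign.
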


In the full critical case, we have
\begin{thm}\label{thm-2}
Let $(M,g)$ be a compact Riemann surface with unit area and Gauss curvature $K$, and let $h_1, h_2 \in C^2(M)$ be functions that are positive somewhere. Define the positive sets $M_i^+ = \{x \in M : h_i(x) > 0\}$ for $i = 1, 2$. If the following conditions are satisfied:
\begin{align}\label{cond-2}
\Delta \log h_i(x) - 2K(x) > 0 \quad \forall x \in M_i^+, \quad i = 1, 2,
\end{align}
then the functional $J_{8\pi,8\pi}$ admits a minimizer $u\in \mathcal{E}$ solving the equation:
\begin{align}\label{eq-2}
-\Delta u = 8\pi\left(\frac{h_1e^{u}}{\int_M h_1 e^u dv_g} - \frac{h_2e^{-u}}{\int_M h_2 e^{-u} dv_g}\right).
\end{align}
\end{thm}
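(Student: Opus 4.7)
\textbf{Proof strategy for Theorem~\ref{thm-2}.} The plan is to follow the Ding-Jost-Li-Wang scheme from \cite{DJLW97}: subcritical approximation, blow-up analysis in case of non-convergence, derivation of a sharp lower bound for the blow-up value, and construction of test functions beating that bound. First I would take a sequence $\rho_k\nearrow 8\pi$ and minimize the subcritical functional $J_{\rho_k,\rho_k}$ over $\mathcal{E}$. By the subcritical theory recalled in the introduction, each such infimum is attained at some $u_k\in\mathcal{E}$, which solves a subcritical version of \eqref{eq-2}. If $\{u_k\}$ stays bounded in $H^1(M)$, elliptic regularity together with weak lower semicontinuity of $J_{8\pi,8\pi}$, combined with \eqref{mt-ineq} for uniform control of the two logarithmic terms, yields a weak limit $u\in\mathcal{E}$ that minimizes $J_{8\pi,8\pi}$ and solves \eqref{eq-2}. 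The problem thus reduces to excluding the unbounded alternative under \eqref{cond-2}.

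Suppose instead $\|u_k\|_{H^1}\to\infty$. I would then invoke the refined Brezis-Merle analysis developed for sign-changing data in \cite{SZ24,YZ17}, applied simultaneously to the normalized densities $\mu_k^+:=h_1 e^{u_k}/\int_M h_1 e^{u_k}\,dv_g$ and $\mu_k^-:=h_2 e^{-u_k}/\int_M h_2 e^{-u_k}\,dv_g$. Since the Liouville-type classification forces each concentration point to carry a mass which is a positive integer multiple of $8\pi$ and the total masses tend to $8\pi$, each density concentrates at a single point $p_1,p_2\in M$. Because $\mu_k^\pm$ can concentrate only where $h_{1,2}$ is positive and $u_k\to\pm\infty$ there, one obtains $p_i\in M_i^+$ and $p_1\neq p_2$. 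A local bubble-scaling and Pohozaev computation, extending that of \cite{DJLW97} to the two-component system, then produces an asymptotic lower bound
\begin{equation*}
\liminf_{k\to\infty} J_{\rho_k,\rho_k}(u_k)\geq \Lambda(p_1,p_2),
\end{equation*}
where $\Lambda(p_1,p_2)$ is an explicit quantity involving $h_1(p_1)$, $h_2(p_2)$, the Gauss curvature, and the Green's function of $-\Delta$ on $(M,g)$ evaluated at the blow-up pair.

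The final step is to contradict this lower bound with a well-chosen test function. Fixing $q_i\in M_i^+$, I would build a two-bubble profile of the form
\begin{equation*}
\phi_\epsilon(x)=\psi_{\epsilon,q_1}(x)-\psi_{\epsilon,q_2}(x)+\text{Green and cutoff corrections},
\end{equation*}
with standard DJLW bubble $\psi_{\epsilon,q}(x)=\log\tfrac{\epsilon^2}{(\epsilon^2+\dist(x,q)^2)^2}$ transplanted to $(M,g)$ via isothermal coordinates and adjusted so that $\phi_\epsilon\in\mathcal{E}$. Expanding $J_{8\pi,8\pi}(\phi_\epsilon)$ as $\epsilon\to 0$, the leading $O(\log\epsilon)$ divergences cancel exactly because $\rho_1=\rho_2=8\pi$, and the next-order coefficient at each bubble center is proportional to $\Delta\log h_i(q_i)-2K(q_i)$; by \eqref{cond-2} this is strictly positive at both $q_1,q_2$, so minimizing the Green's-function interaction over the two centers produces $J_{8\pi,8\pi}(\phi_\epsilon)<\Lambda(p_1,p_2)$ for $\epsilon$ small. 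The resulting contradiction forces boundedness of $\{u_k\}$, and the first step delivers the desired minimizer.

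The main obstacle is the sign-changing two-bubble blow-up analysis of the second step. Beyond adapting the refined Brezis-Merle arguments of \cite{SZ24,YZ17} from the one-component Kazdan-Warner setting to a two-component system, one must verify that (i) the two bubbles cannot coalesce, since $u_k$ cannot tend simultaneously to $+\infty$ and $-\infty$ at the same point, (ii) no secondary bubbles split off from either density despite the total masses hitting the critical threshold $8\pi$, and (iii) neither blow-up point lies on the sign-change locus of $h_1$ or $h_2$. These three items are exactly what the refined Brezis-Merle analysis announced in the abstract must deliver before the test function comparison above can conclude.
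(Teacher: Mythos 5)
Your overall blueprint (subcritical approximation, blow-up analysis, sharp lower bound, test-function comparison) matches the paper's, but there are two structural gaps in the middle step that the paper treats carefully and that your sketch glosses over.

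First, you jump directly to a two-bubble scenario. The paper instead divides the blow-up alternative into three mutually exclusive cases according to whether $\log\int_M h_1 e^{u_\epsilon}\,dv_g$ and $\log\int_M h_2 e^{-u_\epsilon}\,dv_g$ individually tend to $+\infty$ or stay bounded. In Cases 1 and 2 exactly one of the two densities concentrates; the paper then observes that the argument of Theorem~\ref{thm-1} (partial critical case) goes through verbatim with $\rho_2=8\pi$ once one knows $\log\int_M h_2 e^{-u_\epsilon}\,dv_g\leq C$, and uses condition~\eqref{cond-2} with $\rho_2=8\pi$ to kill the single bubble. Only Case 3 (both logs diverging) requires the genuinely two-bubble analysis you describe. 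Your proposal does not address Cases 1 and 2, and they are not vacuous: nothing a priori forces the two densities to concentrate simultaneously.

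Second, your claim that a "Liouville-type classification forces each concentration point to carry a mass which is a positive integer multiple of $8\pi$" is not how the paper settles the blow-up structure, and as stated it is not sufficient here because the two densities interact at each potential concentration point. What the paper actually proves is: (a) by an improved Moser--Trudinger (Aubin-type) inequality, at least one of $\mathrm{supp}\,\mu_1$, $\mathrm{supp}\,\mu_2$ is a single point; and (b) a local Pohozaev identity (Lemma~\ref{lem-poho}) at each $x_l\in S$ gives the algebraic constraint $\bigl[\sigma_1(\{x_l\})-\sigma_2(\{x_l\})\bigr]^2=\sigma_1(\{x_l\})+\sigma_2(\{x_l\})$, where $\sigma_i=h_i\mu_i$. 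Plugging in $\sigma_1(\{x_1\})=1$ forces $\sigma_2(\{x_1\})\in\{0,3\}$, and since the total mass of $\sigma_2$ is $1$ one concludes $\sigma_2(\{x_1\})=0$ and $\sigma_2$ concentrates at a different single point $x_2$. This Pohozaev step is precisely what rules out coalescence (your obstacle (i)) and also fixes the normalization of both masses — it is the analytic core missing from your sketch. Also, a smaller point: the paper's lower bound and test function are anchored at the actual blow-up points $x_1,x_2$ produced by the analysis (with the Green function $G$ of $8\pi(\delta_{x_1}-\delta_{x_2})$), not at freely chosen centers optimizing a Green-function interaction, so "minimizing the Green's-function interaction over the two centers" is not the mechanism used.
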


The Ding-Jost-Li-Wang type condition (as seen in \eqref{cond-djlw}) arises naturally in the study of critical semilinear equations. Indeed, conditions \eqref{zhou-cond-1}, \eqref{cond-1}, and \eqref{cond-2} all represent variations of this fundamental condition. This type condition also arise in the study of existence results for critical Toda systems \cite{LL05,JW01,JLW06,SZhu24+,SZhu24++} and the critical prescribed $Q$ curvature problems \cite{LLL12,BL24}.

To end the introduction, we would like to sketch the proofs of our main theorems. For both Theorems \ref{thm-1} and \ref{thm-2}, we consider
the sequence of minimizers (denote by $u_\ep$) of the corresponding perturbed functionals. We analysis the minimizers carefully, there are two possibilities:
(1) $u_\ep$ is convergent in $H^1(M)$;
(2) $u_\ep$ or $-u_\ep$ blows up.
If we are in (1), then we are done. So we assume $u_\ep$ blows up. In the proof of Theorem \ref{thm-1}, we show that $u_\ep$  blows up only
at one point where $h_1$ is positive, $-u_\ep\leq C$  and does not blow up.
In the proof of Theorem \ref{thm-2}, there are three cases:

Case 1. $u_\ep$ only blows up at one point where
$h_1$ is positive, $-u_\ep\leq C$ and does not blow up.

Case 2. $u_\ep\leq C$ and does not blow up and $-u_\ep$ only blows up at one point where
$h_2$ is positive.

Case 3. $u_\ep$ only blows up at one point $x_1$ with $h_1(x_1)>0$ and $-u_\ep$ only blows up at one point $x_2$ with $h_2(x_2)>0$, and $x_1\neq x_2$.

Based on these facts, using blowup analysis, we know the asymptotic property of $u_\ep$ (or $-u_\ep$) near the blowup point. We can show
$u_\ep$ converges to some Green function. These combining with the Dirichlet principle can give estimate on the explicit lower bound for the functional when blowup happens. After that, we construct a sequence of test functions $\phi_\ep$ such that under assumptions \eqref{cond-1} (or \eqref{cond-2}) the value of the functional at $\phi_\ep$ is strictly smaller than the lower bound we derived. This contradiction tells us that
the blowup does not happen and we are in the first situation.

The outline of the rest of the paper is following: In Sections 2 and 3,  we present the proofs of Theorems \ref{thm-1} and \ref{thm-2} respectively. Throughout the whole paper,
the constant $C$ is varying from line to line and even in the same line, we do not distinguish
sequence and its subsequences since we only care about the existence result.

\section{Proof of Theorem \ref{thm-1}}
In this section, we shall prove Theorem \ref{thm-1}. Firstly, we do some analysis on
the partial critical case, i.e., $\rho_1=8\pi$ and $\rho_2\in(0,8\pi)$. Secondly, we derive the lower bound for $J_{8\pi,\rho_2}$ by assuming
$u_\ep$ blows up. Lastly, we construct a blowup sequence $\phi_\ep$ such that $J_{8\pi,\rho_2}(\phi_\ep)$ is smaller than the lower bound, which
contradicts $u_\ep$ blows up.
\subsection{Analysis for the partial critical case}

For any $\ep>0$ small, inequality \eqref{mt-ineq} shows that there exists a $u_\ep\in\mathcal{E}$ such that
\begin{align}\label{minimizer}
J_{8\pi-\ep,\rho_2}(u_\ep) = \inf_{\mathcal{E}} J_{8\pi-\ep,\rho_2}(u)
\end{align}
and
 \begin{align}\label{eq-uep-1}
 \begin{cases}
 -\Delta u_\ep = (8\pi-\ep)\left(\frac{h_1e^{u_\ep}}{\int_Mh_1e^{u_\ep}dv_g}-1\right) - \rho_2\left(\frac{h_2e^{-u_\ep}}{\int_Mh_2e^{-u_\ep}dv_g}-1\right),\\
 \int_Mu_\ep dv_g=0.
 \end{cases}
 \end{align}
In this subsection, we shall study analytic properties of $u_\ep$.

First of all, we have $\|\Delta u_\ep\|_1\leq C$. In fact, we have
\begin{lem}\label{lem-bd}
There exist two positive constants $C_1$ and $C_2$ such that
\begin{align*}
C_1\leq \frac{\int_Me^{u_\ep}dv_g}{\int_{M}h_1e^{u_\ep}dv_g},~\frac{\int_Me^{-u_\ep}dv_g}{\int_{M}h_2e^{-u_\ep}dv_g}\leq C_2.
\end{align*}
\end{lem}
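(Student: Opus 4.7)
The plan is to dispose of the lower bounds via the trivial pointwise estimate $h_i\le\|h_i\|_\infty$ and to derive the upper bounds by an algebraic elimination that combines four standing inequalities: the minimizer bound $J_{8\pi-\ep,\rho_2}(u_\ep)\le C$, the Moser--Trudinger inequality \eqref{mt-ineq}, Jensen's inequality, and the sup-bounds on $h_1,h_2$.

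First I would handle the easy direction. From $h_i\le\|h_i\|_\infty$ one has $\int_M h_i e^{\pm u_\ep}\,dv_g\le\|h_i\|_\infty\int_M e^{\pm u_\ep}\,dv_g$, so the left inequalities hold with $C_1=\min(1/\|h_1\|_\infty,1/\|h_2\|_\infty)$. For the upper bounds, abbreviate $T=\int_M|\nabla u_\ep|^2\,dv_g$, $a=\log\int_M h_1 e^{u_\ep}\,dv_g$, $b=\log\int_M h_2 e^{-u_\ep}\,dv_g$, $p=\log\int_M e^{u_\ep}\,dv_g$, $q=\log\int_M e^{-u_\ep}\,dv_g$. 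Fixing some $w_0\in\mathcal{E}$, the minimality of $u_\ep$ gives $J_{8\pi-\ep,\rho_2}(u_\ep)\le J_{8\pi-\ep,\rho_2}(w_0)\le C$, which rearranges to $(8\pi-\ep)a+\rho_2 b\ge T/2-C$; inequality \eqref{mt-ineq} reads $p+q\le T/(16\pi)+C$; Jensen's inequality applied to $\int_M u_\ep\,dv_g=0$ on the unit-area $M$ gives $p,q\ge 0$; and the sup-bounds give $a\le p+C$, $b\le q+C$.

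The upper bounds then follow from two parallel eliminations. To control $p-a$, I would substitute $b\le q+C$ into the minimizer inequality, isolate $a\ge T/(2(8\pi-\ep))-\rho_2 q/(8\pi-\ep)-C$, and combine with $p\le T/(16\pi)+C-q$ from \eqref{mt-ineq} to obtain
\[
p-a\;\le\;-\frac{\ep\,T}{16\pi(8\pi-\ep)}-\frac{8\pi-\ep-\rho_2}{8\pi-\ep}\,q+C\;\le\;C,
\]
using $q\ge 0$ and the two positivity conditions $\ep>0$ and $8\pi-\ep-\rho_2>0$ (the latter holds for small $\ep$ since $\rho_2<8\pi$). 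Symmetrically, substituting $a\le p+C$ isolates $b\ge T/(2\rho_2)-(8\pi-\ep)p/\rho_2-C$; combining with $q\le T/(16\pi)+C-p$ and then the crude $p\le T/(16\pi)+C$ makes the surviving $T$-terms collapse to
\[
q-b\;\le\;-\frac{\ep\,T}{16\pi\,\rho_2}+C\;\le\;C,
\]
again by $\ep>0$. Exponentiating both bounds gives $C_2=e^C$.

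The main obstacle is combinatorial rather than conceptual: both subcriticality gaps $\ep>0$ (from $\rho_1=8\pi-\ep$) and $8\pi-\rho_2>0$ (from $\rho_2<8\pi$) must be activated simultaneously, and the order of substitutions in each of the two eliminations must be chosen so that the surviving $T$-terms carry the correct (nonpositive) sign. If either gap degenerated, the coefficient arithmetic would not close and at least one of the ratios could diverge as $T\to\infty$.
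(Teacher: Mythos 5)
Your proof is correct and rests on exactly the same ingredients as the paper's: the Moser--Trudinger inequality \eqref{mt-ineq}, the minimality $J_{8\pi-\ep,\rho_2}(u_\ep)\le C$, Jensen's inequality giving $p,q\ge 0$, and the pointwise bound $h_i\le\max_M h_i$. The only difference is bookkeeping. The paper inserts the minimizer bound into \eqref{mt-ineq} once, obtains the single inequality $\tfrac{8\pi-\ep}{8\pi}(p-a)+\tfrac{\rho_2}{8\pi}(q-b)\le C$ (after using $p,q\ge 0$ to replace $p+q$ by the weighted sum), and then, because both $p-a$ and $q-b$ are already known to be $\ge\log C_1$, reads off both upper bounds from the one inequality. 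You instead run two parallel eliminations, isolating $a$ (resp.\ $b$) from the minimizer bound, substituting the sup-bound for the other variable, and then canceling the Dirichlet energy $T$ against the MT inequality; this requires one extra crude substitution ($p\le T/(16\pi)+C$) in the second elimination but avoids having to invoke the lower bounds again. Both routes are valid; the paper's is slightly shorter and, as a small bonus, uses only $\rho_2>0$ rather than $\rho_2<8\pi$ at this stage (your $q$-coefficient $-\tfrac{8\pi-\ep-\rho_2}{8\pi-\ep}$ has the wrong sign if $\rho_2>8\pi-\ep$, though in fact plugging in the crude bound on $q$ would then make the $T$-coefficient collapse to exactly zero, so your scheme also survives the boundary case). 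Your closing remark that ``both gaps must be activated simultaneously'' is therefore a touch overstated --- the computation only strictly needs $\ep>0$ --- but this does not affect the correctness of the argument under the stated hypotheses.
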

\begin{proof}
Since $h_1$ and $h_2$ are positive somewhere, one can choose $$C_1=\min\left\{\frac{1}{\max_Mh_1},\frac{1}{\max_Mh_2}\right\}$$
such that
\begin{align}\label{bd-1}
C_1\leq \frac{\int_Me^{u_\ep}dv_g}{\int_{M}h_1e^{u_\ep}dv_g}~~\text{and}~~C_1\leq\frac{\int_Me^{-u_\ep}dv_g}{\int_{M}h_2e^{-u_\ep}dv_g}.
\end{align}
By \eqref{mt-ineq} and \eqref{minimizer}, we have
\begin{align}\label{bd-2}
&\log\int_Me^{u_\ep}dv_g+\log\int_Me^{-u_\ep}dv_g\leq\frac{1}{16\pi}\int_M|\nabla u_\ep|^2dv_g+C\nonumber\\
=&\frac{1}{8\pi}\left(J_{8\pi-\ep,\rho_2}(u_\ep)+(8\pi-\ep)\log\int_Mh_1e^{u_\ep}dv_g+\rho_2\log\int_Mh_2e^{-u_\ep}dv_g\right)\nonumber\\
\leq& C+\frac{8\pi-\ep}{8\pi}\log\int_Mh_1e^{u_\ep}dv_g+\frac{\rho_2}{8\pi}\log\int_Mh_2e^{-u_\ep}dv_g.
\end{align}
Using Jensen's inequality, we know $\log\int_Me^{u_\ep}dv_g\geq0$ and $\log\int_Me^{-u_\ep}dv_g\geq0$. Then we have by \eqref{bd-2} that
\begin{align*}
\frac{8\pi-\ep}{8\pi}\log\frac{\int_Me^{u_\ep}dv_g}{\int_Mh_1e^{u_\ep}dv_g}+\frac{\rho_2}{8\pi}\log\frac{\int_Me^{-u_\ep}dv_g}{\int_Mh_2e^{-u_\ep}dv_g}\leq C.
\end{align*}
This together with \eqref{bd-1} yields that there exists a constant $C_2$ such that
\begin{align*}
\frac{\int_Me^{u_\ep}dv_g}{\int_Mh_1e^{u_\ep}dv_g}\leq C_2~~\text{and}~~\frac{\int_Me^{-u_\ep}dv_g}{\int_Mh_2e^{-u_\ep}dv_g}\leq C_2.
\end{align*}
We finish the proof of the lemma.
\end{proof}

Secondly, the $L^s$ norm of $\nabla u_\ep$ is bounded for any $s\in(1,2)$. Namely, one has
\begin{lem}\label{lem-Ls}
For any $s\in(1,2)$, $\|\nabla u_\ep\|_{s}\leq C$.
\end{lem}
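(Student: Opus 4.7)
The plan is to pass from the $L^1$ bound $\|\Delta u_\ep\|_1\leq C$ (noted immediately before this lemma and itself a consequence of \eqref{eq-uep-1} together with Lemma \ref{lem-bd}, using the elementary estimate $\int_M |h_i| e^{\pm u_\ep} dv_g \leq (\max_M |h_i|)\int_M e^{\pm u_\ep} dv_g$ to control the ratios $\int_M |h_i|e^{\pm u_\ep}/\int_M h_i e^{\pm u_\ep}$) to the desired $L^s$ bound on $\nabla u_\ep$ by representing $u_\ep$ through the Green's function of $-\Delta$ on $(M,g)$.

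Let $G(x,y)$ be the Green's function of $-\Delta$: on the compact surface $(M,g)$ one has $G(x,y) = -\frac{1}{2\pi}\log d(x,y) + O(1)$ together with the gradient asymptotic $|\nabla_x G(x,y)| \leq C/d(x,y)$. Since $\int_M u_\ep dv_g = 0$, I would invoke
\begin{align*}
u_\ep(x) = \int_M G(x,y)(-\Delta u_\ep)(y)\, dv_g(y), \qquad |\nabla u_\ep(x)| \leq C\int_M \frac{|\Delta u_\ep(y)|}{d(x,y)}\, dv_g(y).
\end{align*}
Minkowski's integral inequality then yields, for any $s\in(1,2)$,
\begin{align*}
\|\nabla u_\ep\|_s \leq C\int_M |\Delta u_\ep(y)|\left(\int_M d(x,y)^{-s}\, dv_g(x)\right)^{1/s} dv_g(y) \leq C\|\Delta u_\ep\|_1 \leq C,
\end{align*}
where one uses that $\sup_{y\in M}\int_M d(x,y)^{-s}\, dv_g(x) < \infty$ whenever $s<2$ (in geodesic polar coordinates around $y$, the singular factor becomes $r^{1-s}$, which is integrable near $r=0$).

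No serious obstacle arises. The argument is the classical weak-type estimate for the Riesz potential $f\mapsto \nabla(-\Delta)^{-1} f$ applied to $f\in L^1$, combined with the embedding $L^{2,\infty}\hookrightarrow L^s$ for $s<2$; equivalently, $\nabla u_\ep$ itself lies in the weak space $L^{2,\infty}$ with norm controlled by $\|\Delta u_\ep\|_1$. The only quantitative point to verify is the uniform integrability $\sup_y \int_M d(\cdot,y)^{-s}\, dv_g < \infty$, which is the standard $s<n$ restriction specialized to the two-dimensional setting $n=2$ and is exactly why the range of admissible $s$ stops short of $2$.
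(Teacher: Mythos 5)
Your proof is correct but takes a genuinely different route from the paper's. The paper argues by duality: writing $s'$ for the H\"older conjugate of $s$ (so $s'>2$), it tests $\nabla u_\ep$ against mean-zero functions $\phi\in W^{1,s'}(M)$ with $\|\phi\|_{1,s'}=1$, observes via the Sobolev embedding $W^{1,s'}(M)\hookrightarrow L^\infty(M)$ (valid in dimension two precisely because $s'>2$) that such $\phi$ are uniformly bounded, and then integrates by parts to reduce to $\|\Delta u_\ep\|_1\le C$. You instead represent $u_\ep$ via the Green's function, use the standard kernel decay $|\nabla_x G(x,y)|\le C/d(x,y)$, and invoke Minkowski's integral inequality together with $\sup_y\int_M d(x,y)^{-s}\,dv_g<\infty$ for $s<2$; this is the Riesz-potential estimate $f\mapsto\nabla(-\Delta)^{-1}f$ from $L^1$ to weak $L^2$ and then into $L^s$. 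Both proofs hinge on the same dimensional fact (the threshold at $s=2$ in dimension two) and both reduce to the $L^1$ bound on $\Delta u_\ep$; the paper's duality argument is self-contained and avoids any regularity discussion of the Green's kernel, while yours makes the weak-$L^2$ nature of $\nabla u_\ep$ more transparent at the cost of having to justify differentiation under the integral sign in the Green representation.
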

\begin{proof}
Let $s'=1/s>2$, we know by definition that
\begin{align*}
\|\nabla u_\ep\|_s=\sup\left\{\left|\int_M\nabla u_\ep\nabla \phi dv_g\right|: \phi\in W^{1,s'}(M), \int_M \phi dv_g=0, \|\phi\|_{1,s'}=1\right\}.
\end{align*}
The Sobolev embedding theorem shows that $\|\phi\|_{L^\infty(M)}\leq C$ for some constant $C$.
Meanwhile, Lemma \ref{lem-bd} tells us that $\|\Delta u_\ep\|_1$ is bounded. Then it follows from equation \eqref{eq-uep-1} that
\begin{align*}
\left|\int_M\nabla u_\ep\nabla \phi dv_g\right|=\left|\int_M\phi(-\Delta u_\ep)dv_g\right|\leq C.
\end{align*}
This ends the proof.
\end{proof}

We denote by
\begin{align*}
m_\ep=\max_M\left(u_\ep-\log\int_Mh_1e^{u_\ep}dv_g\right),~~n_\ep=\max_M\left(-u_\ep-\log\int_Mh_2e^{-u_\ep}dv_g\right).
\end{align*}

\begin{lem}\label{lem-char} The following three items are equivalent:

$(i)$ $m_{\ep}+n_{\ep}\to+\infty$ as $\ep\to0$;

$(ii)$ $\|\nabla u_\ep\|_2\to+\infty$ as $\ep\to0$;

$(iii)$ $\log\int_Mh_1e^{u_\ep}dv_g+\log\int_Mh_2e^{-u_\ep}dv_g\to+\infty$ as $\ep\to0$.
\end{lem}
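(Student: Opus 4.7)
The plan is to close the equivalence as a cycle $(ii)\Rightarrow(iii)\Rightarrow(i)\Rightarrow(ii)$, using the minimality of $u_\ep$, the Moser-Trudinger inequality \eqref{mt-ineq}, and standard elliptic regularity applied to \eqref{eq-uep-1}. Throughout I abbreviate $A=\log\int_M h_1 e^{u_\ep}\,dv_g$ and $B=\log\int_M h_2 e^{-u_\ep}\,dv_g$.

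For $(ii)\Rightarrow(iii)$, I would test minimality of $u_\ep$ against a fixed reference $\phi_0\in\mathcal{E}$. The inequality $J_{8\pi-\ep,\rho_2}(u_\ep)\leq J_{8\pi-\ep,\rho_2}(\phi_0)$ rearranges to
$$\tfrac{1}{2}\|\nabla u_\ep\|_2^2\leq(8\pi-\ep)A+\rho_2 B+C,$$
and Lemma~\ref{lem-bd} together with Jensen's inequality bounds $A$ and $B$ from below. Hence $\|\nabla u_\ep\|_2^2\to\infty$ forces $A+B\to\infty$, which is (iii).

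The content of the lemma lies in $(iii)\Rightarrow(i)$, which I would prove by contrapositive. Suppose $m_\ep+n_\ep$ remains bounded along a subsequence. Since $1=\int_M h_1 e^{u_\ep-A}\,dv_g\leq e^{m_\ep}\max_M h_1^+$ (and similarly for $h_2,n_\ep$), each of $m_\ep,n_\ep$ is bounded below; hence each is bounded above and below. Their definitions then read as the pointwise bounds $e^{u_\ep-A}\leq e^{m_\ep}=O(1)$ and $e^{-u_\ep-B}\leq e^{n_\ep}=O(1)$. Rewriting \eqref{eq-uep-1} as
$$-\Delta u_\ep=(8\pi-\ep)\,h_1 e^{u_\ep-A}-\rho_2\,h_2 e^{-u_\ep-B}-(8\pi-\ep-\rho_2),$$
the right-hand side is uniformly bounded in $L^\infty(M)$. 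Combined with $\int_M u_\ep\,dv_g=0$, standard elliptic regularity yields $u_\ep$ bounded in $W^{2,p}(M)$ for every $p<\infty$, hence in $C^0(M)$. This makes $\int_M h_i e^{\pm u_\ep}\,dv_g$ uniformly bounded above, while Lemma~\ref{lem-bd} supplies the corresponding positive lower bounds, so $A$ and $B$ stay bounded, contradicting (iii).

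Finally, $(i)\Rightarrow(ii)$ is the softest step. By contrapositive, if $\|\nabla u_\ep\|_2$ is bounded, Moser-Trudinger applied to scalar multiples of $u_\ep$ yields $\int_M e^{q u_\ep}\,dv_g\leq C_q$ for every $q\in\R$; together with the denominator bounds from Lemma~\ref{lem-bd}, the right-hand side of \eqref{eq-uep-1} lies in every $L^p$. Elliptic regularity then gives $\|u_\ep\|_{C^1}\leq C$, so $m_\ep$ and $n_\ep$ are bounded. The main obstacle throughout is the $(iii)\Rightarrow(i)$ direction: the crucial insight is that $m_\ep$ and $n_\ep$ directly control the pointwise sizes of the normalized nonlinearities $h_i e^{\pm u_\ep}/\int_M h_i e^{\pm u_\ep}\,dv_g$, so their boundedness upgrades the $L^1$-control from Lemma~\ref{lem-bd} into $L^\infty$-control on the source and lets the elliptic bootstrap close.
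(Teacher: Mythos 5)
Your proposal is correct and follows essentially the same path as the paper: you prove the same three implications $(i)\Rightarrow(ii)$, $(ii)\Rightarrow(iii)$, $(iii)\Rightarrow(i)$ (merely listed in a different starting order), each by the same ideas — minimality against a fixed reference function to bound the energy, Lemma~\ref{lem-bd} together with Jensen to bound $\log\int_M h_1e^{u_\ep}dv_g$ and $\log\int_M h_2e^{-u_\ep}dv_g$ from below, and the observation that boundedness of $m_\ep,n_\ep$ converts the $L^1$ control of Lemma~\ref{lem-bd} into an $L^\infty$ bound on $\Delta u_\ep$ so that elliptic regularity plus $\int_M u_\ep\,dv_g=0$ closes the argument.
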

\begin{proof}
$(i)\Rightarrow(ii)$: Suppose not, then $\|\nabla u_\ep\|_2\leq C$. Since $\overline{u_\ep}=0$, we know by the Moser-Trudinger inequality that
$e^{|u_\ep|}$ is bounded in $L^s(M)$ for any $s\geq1$. Using Jensen's inequality, $\int_Me^{u_\ep}dv_g\geq e^{\overline{u_\ep}}=1$ and
$\int_Me^{-u_\ep}dv_g\geq e^{-\overline{u_\ep}}=1$. This together with Lemma \ref{lem-bd} shows that
\begin{align}\label{char-1}
\frac{1}{\int_Mh_1e^{u_\ep}dv_g}\leq C_2~~\text{and}~~\frac{1}{\int_Mh_2e^{-u_\ep}dv_g}\leq C_2.
\end{align}
Taking these facts into \eqref{eq-uep-1} we have $\|\Delta u_\ep\|_2\leq C$. By Poincar\'{e}'s inequality, $\|u_\ep\|_2\leq C\|\nabla u_\ep\|_2\leq C$.
Then by the elliptic estimates, $\|u_\ep\|_{2,2}\leq C$. The Sobolev embedding theorem shows $\|u_\ep\|_{C^{1}(M)}\leq C$. It together with
\eqref{char-1} yields $m_\ep\leq C$ and $n_\ep\leq C$. This contradicts $(i)$.

$(ii)\Rightarrow(iii)$: Since $J_{8\pi-\ep,\rho_2}(u_\ep)$ is bounded, we have
\begin{align}\label{char-2}
(8\pi-\ep)\log\int_Mh_1e^{u_\ep}dv_g+\rho_2\log\int_Mh_2e^{-u_\ep}dv_g\to+\infty
\end{align}
as $\ep\to0$. Combining \eqref{char-1} and \eqref{char-2} we arrive at $(iii)$.

$(iii)\Rightarrow(i)$: If not, we have $m_\ep+n_\ep\leq C$. By Lemma \ref{lem-bd}, one has $m_\ep, n_\ep\geq\log C_1$. Then both
$m_\ep$ and $n_\ep$ are bounded. This together with \eqref{char-1} yields $\|\Delta u_\ep\|_{L^{\infty}(M)}\leq C$. By Lemma \ref{lem-Ls}
and the Sobolev inequality we obtain $\|u_\ep\|_2\leq C$. Then from the elliptic estimates we have $\|u_\ep\|_{2,2}\leq C$.  The Sobolev embedding
 theorem shows $\|u_\ep\|_{C^{1}(M)}\leq C$. Then $\max_{M}|u_\ep|\leq C$ and
\begin{align}\label{char-3}
C\geq& \max_Mu_\ep+\max_M(-u_\ep)\nonumber\\
    =&m_\ep+\log \int_Mh_1e^{u_\ep}dv_g+n_\ep+\log\int_Mh_2e^{-u_\ep}dv_g\nonumber\\
    \geq&2\log C_1+\log \int_Mh_1e^{u_\ep}dv_g+\log\int_Mh_2e^{-u_\ep}dv_g.
\end{align}
Since \eqref{char-3} contradicts $(iii)$, we have $(i)$. This finishes the proof.
\end{proof}

\begin{definition}[Blow up]\label{def} If one of the three items in Lemma \ref{lem-char} holds, we call $u_\ep$ blows up.
\end{definition}

For $(u_\ep)$, there are two possibilities: One is $\|\nabla u_\ep\|_2\leq C$, then since $\overline{u_\ep}=0$ we have by
Poincar\'{e}'s inequality that $\|u_\ep\|_{1,2}\leq C$, one can show $u_\ep\rightharpoonup u_0\in \mathcal{E}$ weakly in $H^1(M)$ as $\ep\to0$
and $u_0$ minimizes $J_{8\pi,\rho_2}$ in $\mathcal{E}$, then the proof of Theorem \ref{thm-1} can terminate. Or else,
$\|\nabla u_\ep\|_2\to+\infty$ as $\ep\to0$. By Lemma \ref{lem-Ls} and Definition \ref{def}, $u_\ep$ blows up. In the rest of the proof of
Theorem \ref{thm-1}, we always assume $u_\ep$ blows up.

\begin{lem}\label{lem-ubd}
If $u_\ep$ blows up, then
\begin{align*}
\log\int_Mh_1e^{u_\ep}dv_g\to+\infty~\text{as}~\ep\to0~\text{and}~~\log\int_Mh_2e^{-u_\ep}dv_g\leq C.
\end{align*}
\end{lem}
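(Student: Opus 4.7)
The lemma asserts that when $u_\ep$ blows up in this partial critical regime, it does so only in the positive direction: writing $A_\ep=\log\int_M h_1 e^{u_\ep}dv_g$ and $B_\ep=\log\int_M h_2 e^{-u_\ep}dv_g$, we want $A_\ep\to+\infty$ while $B_\ep\le C$. The structural reason is that $\rho_2<8\pi$ is \emph{strictly} subcritical, so the $e^{-u_\ep}$ side cannot concentrate; the sharp coefficient $1/(16\pi)$ in the Moser-Trudinger inequality \eqref{mt-ineq} is exactly calibrated to make this work.

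The plan is to combine three ingredients: (i) the energy upper bound $J_{8\pi-\ep,\rho_2}(u_\ep)\le J_{8\pi-\ep,\rho_2}(v_0)\le C$, obtained by testing against any fixed $v_0\in\mathcal{E}$; (ii) the sharp inequality \eqref{mt-ineq}; and (iii) the two-sided comparison in Lemma \ref{lem-bd}, which lets us swap $\int_M e^{\pm u_\ep}dv_g$ for $\int_M h_i e^{\pm u_\ep}dv_g$ up to additive constants. Unpacking (i) gives
\begin{align*}
\tfrac12\|\nabla u_\ep\|_2^2 \le C + (8\pi-\ep)A_\ep + \rho_2 B_\ep,
\end{align*}
and substituting this into \eqref{mt-ineq}, after using (iii), yields
\begin{align*}
A_\ep + B_\ep \le \frac{1}{16\pi}\|\nabla u_\ep\|_2^2 + C \le \frac{8\pi-\ep}{8\pi}A_\ep + \frac{\rho_2}{8\pi}B_\ep + C,
\end{align*}
which rearranges to $\frac{\ep}{8\pi}A_\ep + \frac{8\pi-\rho_2}{8\pi}B_\ep \le C$.

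To finish, I would observe that Jensen's inequality gives $\int_M e^{u_\ep}dv_g\ge 1$ since $\overline{u_\ep}=0$, so Lemma \ref{lem-bd} forces $A_\ep\ge -\log C_2$, a uniform lower bound. Since $8\pi-\rho_2>0$ by hypothesis, the previous display then forces $B_\ep\le C$. Finally, because $u_\ep$ blows up, Lemma \ref{lem-char}(iii) yields $A_\ep+B_\ep\to+\infty$, and combined with $B_\ep\le C$ this delivers $A_\ep\to+\infty$, establishing both conclusions. I do not anticipate a serious obstacle here; the only point that must be handled with care is the bookkeeping of additive constants when passing between $\int_M e^{\pm u_\ep}dv_g$ and $\int_M h_i e^{\pm u_\ep}dv_g$, and the strict positivity of the coefficient $(8\pi-\rho_2)/(8\pi)$ is what makes the whole scheme work and is exactly why the statement is confined to the partial critical regime.
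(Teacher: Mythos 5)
Your proof is correct and mirrors the paper's argument almost step for step: you combine the energy upper bound with the Ohtsuka--Suzuki inequality to obtain $\tfrac{\ep}{8\pi}A_\ep + \tfrac{8\pi-\rho_2}{8\pi}B_\ep \le C$, use the Jensen/Lemma~\ref{lem-bd} lower bound $A_\ep \ge -\log C_2$ to absorb the $\ep$-term, conclude $B_\ep\le C$ from strict subcriticality, and then invoke Lemma~\ref{lem-char}(iii) to deduce $A_\ep\to+\infty$. The paper does exactly this, starting from its equation \eqref{bd-2} and passing between $\int_M h_i e^{\pm u_\ep}\,dv_g$ and $\int_M e^{\pm u_\ep}\,dv_g$ via $\log\max_M h_i$, so no new ideas are needed.
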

\begin{proof}
In view of \eqref{bd-2}, we have
\begin{align*}
 &\log\int_Mh_1e^{u_\ep}dv_g+\log\int_Mh_2e^{-u_\ep}dv_g\\
\leq&\log\max_{M}h_1+\log\max_{M}h_2+\log\int_Me^{u_\ep}dv_g+\log\int_Me^{-u_\ep}dv_g\\
\leq&\frac{8\pi-\ep}{8\pi}\log\int_Mh_1e^{u_\ep}dv_g+\frac{\rho_2}{8\pi}\log\int_Mh_2e^{-u_\ep}dv_g+C,
\end{align*}
which yields
\begin{align*}
 \frac{\ep}{8\pi}\log\int_Mh_1e^{u_\ep}dv_g+\left(1-\frac{\rho_2}{8\pi}\right)\log\int_Mh_2e^{-u_\ep}dv_g\leq C.
\end{align*}
This together with \eqref{char-1} shows $\log\int_Mh_2e^{-u_\ep}dv_g\leq C$. Since $u_\ep$ blows up, by Lemma \ref{lem-char} we know
$\log\int_Mh_1e^{u_\ep}dv_g\to+\infty$ as $\ep\to0$. This finishes the proof.
\end{proof}

By Lemma \ref{lem-Ls}, there exists a function $G\in W^{1,s}(M)$
such that $u_\ep\rightharpoonup G$ weakly in $W^{1,s}(M)$ as $\ep\to0$ for any $s\in(1,2)$.
In view of Lemma \ref{lem-bd}, there exist two nonnegative bounded measures $\mu_1$ and $\mu_2$ such that
\begin{align*}
e^{u_\ep-\log\int_Mh_1e^{u_\ep}dv_g}dv_g\to\mu_1~~\text{and}~~e^{-u_\ep-\log\int_Mh_2e^{-u_\ep}dv_g}dv_g\to\mu_2
\end{align*}
as $\ep\to0$ in the sense of measures on $M$ (In fact, the convergence here is sub-convergence, for simplicity, we do not distinguish it
in this paper). We denote by
\begin{align*}
\gamma = 8\pi h_1\mu_1 - \rho_2 h_2\mu_2
\end{align*}
and
\begin{align*}
S=\left\{x\in M:~|\gamma(\{x\})|\geq 4\pi\right\}.
\end{align*}
Using similar arguments as Lemma 2.8 in \cite{DJLW97}, we have
\begin{align}\label{uep-out}
\|u_\ep\|_{L_{\text{loc}}^{\infty}(M\setminus S)}\leq C.
\end{align}
Since $u_\ep$ blows up, $S$ is not empty. Or else, with a finite covering argument and \eqref{uep-out},
we have $\|u_\ep\|_{L^{\infty}(M)}\leq C$. Then by \eqref{char-1}, $m_\ep$ and $n_\ep$ are bounded from above. This contradicts with
$u_\ep$ blows up.
Meanwhile, by the definition of $S$, for any $x\in S$ we have
\begin{align*}
\mu_1(\{x\})\geq\frac{1}{4\max_M|h_1|}~~\text{or}~~\mu_2(\{x\})\geq\frac{2\pi}{\rho_2\max_M|h_2|}.
\end{align*}
Recall that $\mu_1$ and $\mu_2$ are bounded, so $S$ is a finite set. Without loss of generality, we assume $S=\{x_l\}_{l=1}^L$. It follows from \eqref{uep-out},
Lemma \ref{lem-ubd} and Fatou's lemma that
\begin{align}\label{mu1mu2}
\mu_1 = \sum_{l=1}^L\mu_1(\{x_l\})\delta_{x_l}~~\text{and}~~\mu_2 = \beta_2e^{-G}+\sum_{l=1}^L\mu_2(\{x_l\})\delta_{x_l},
\end{align}
where $\beta_2=\lim\limits_{\ep\to0}\frac{1}{\int_Mh_2e^{-u_\ep}dv_g}$.

\begin{lem}\label{lem-single}
If $u_\ep$ blows up, then $\emph{supp}\mu_1$ is a single point set.
\end{lem}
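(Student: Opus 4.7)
The plan is to reduce the claim to a simple counting argument: the $h_1$-weighted total mass of $\mu_1$ is exactly $1$, while every atom of $\mu_1$ contributes at least $1$ to this total, so there can be only one atom.

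First I would identify the total $h_1$-weighted mass. For each $\ep$ the normalization $\int_M \frac{h_1 e^{u_\ep}}{\int_M h_1 e^{u_\ep}\,dv_g}\,dv_g = 1$ is built into the definition. Testing the measure-sense convergence $e^{u_\ep - \log\int_M h_1 e^{u_\ep}dv_g}\,dv_g \to \mu_1$ against the continuous function $h_1$ gives $\int_M h_1\,d\mu_1 = 1$. Combined with the purely atomic structure of $\mu_1$ from \eqref{mu1mu2}, this becomes
$$\sum_{l=1}^L h_1(x_l)\mu_1(\{x_l\}) = 1.$$

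Second I would establish the quantization bound $h_1(x_l)\mu_1(\{x_l\})\ge 1$ at every atom of $\mu_1$. Fix $x_l\in S$ with $\mu_1(\{x_l\})>0$ and choose $r>0$ so that $B_{2r}(x_l)\cap (S\setminus\{x_l\})=\emptyset$. By \eqref{uep-out}, $u_\ep$ is uniformly bounded on $\partial B_r(x_l)$; by Lemma \ref{lem-ubd}, $\int_M h_2 e^{-u_\ep}\,dv_g$ stays bounded, so the second nonlinear term $\rho_2 h_2 e^{-u_\ep}/\int_M h_2 e^{-u_\ep}\,dv_g$ is a harmless source on $B_r(x_l)$: since $u_\ep(x_l)\to +\infty$, $e^{-u_\ep}$ decays exponentially there, forcing the atom of $\mu_2$ at $x_l$ to vanish in the limit. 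Writing $u_\ep=v_\ep+w_\ep$ on $B_r(x_l)$, with $v_\ep$ the Newton potential of the concentrating positive forcing and $w_\ep$ solving a Dirichlet problem with uniformly bounded data, I would then run a Brezis--Merle type argument in the spirit of Lemmas 2.4--2.5 of \cite{DJLW97} and \cite{OS}: a genuine blow-up must carry right-hand side mass of at least $8\pi$, so the local limit of $(8\pi-\ep)h_1 e^{u_\ep}/\int_M h_1 e^{u_\ep}\,dv_g$ at $x_l$ is $\ge 8\pi$. Passing to the limit $\ep\to 0$ yields $h_1(x_l)\mu_1(\{x_l\})\ge 1$, and in particular $h_1(x_l)>0$ since $\mu_1\ge 0$.

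Combining the two steps gives $1 = \sum_{l=1}^L h_1(x_l)\mu_1(\{x_l\}) \ge \#\{l:\mu_1(\{x_l\})>0\}$, so at most one atom is nonzero; blow-up guarantees $S\ne\emptyset$ and the equation $=1$ rules out the all-zero case, so exactly one $x_l$ carries $\mu_1$-mass, proving the lemma. The main obstacle is the quantization step: it requires decoupling the two forcing terms near a blow-up point of $u_\ep$ and verifying that the classical $8\pi$-quantization for the single Liouville equation survives the presence of the negative-exponential perturbation. Everything else is measure-theoretic bookkeeping.
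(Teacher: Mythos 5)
Your approach—count atoms via the identity $\int_M h_1\,d\mu_1=1$ plus a per-atom lower bound $h_1(x_l)\mu_1(\{x_l\})\ge 1$—is a natural idea, but the key quantization step is not actually delivered by the tools you cite, and the paper's proof deliberately routes around it.

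The concrete gap is in the second step. A Brezis--Merle style argument (Theorem 1 of \cite{BM}, Lemma 2.8 of \cite{DJLW97}) only gives the \emph{threshold} $4\pi$: if the local mass of the source is below $4\pi$, $u_\ep$ stays bounded. That yields $|\gamma(\{x_l\})|\ge 4\pi$, i.e.\ $h_1(x_l)\mu_1(\{x_l\})\ge\tfrac12$ modulo the $\mu_2$ contribution, which is not strong enough (it would allow two atoms). Upgrading to the sharp $8\pi$-quantization, i.e.\ $h_1(x_l)\mu_1(\{x_l\})\ge 1$ at every atom, is a Li--Shafrir type result; it requires controlling oscillation on small circles and a Pohozaev-type argument, and it is not ``in the spirit of'' Brezis--Merle but a genuinely harder theorem, further complicated here by the second forcing term $\rho_2 h_2 e^{-u_\ep}/\int_M h_2 e^{-u_\ep}dv_g$. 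Moreover, since $h_1$ may change sign, you must separately rule out the possibility that $\mu_1$ carries an atom at a point $x_l$ with $h_1(x_l)\le 0$; if that happened, $h_1(x_l)\mu_1(\{x_l\})\le 0$ and the counting identity $\sum_l h_1(x_l)\mu_1(\{x_l\})=1$ no longer caps the number of atoms. Your parenthetical ``in particular $h_1(x_l)>0$ since $\mu_1\ge 0$'' is circular: positivity of $h_1(x_l)$ is a consequence of the bound you are trying to prove, not a premise.

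The paper takes a different and much shorter route that avoids quantization entirely. It observes that if $\mathrm{supp}\,\mu_1$ contained two or more points, then the normalized measures $e^{u_\ep}/\int_M e^{u_\ep}dv_g$ split mass between at least two regions, so Aubin's improved Moser--Trudinger inequality (cf.\ \cite{CL91b}, Theorem 2.1) applies with constant $\tfrac{1}{32\pi}+\ep'$; choosing $\ep'=\tfrac{1}{96\pi}$ and plugging into $J_{8\pi-\ep,\rho_2}(u_\ep)\le C$, together with \eqref{char-1} and Lemma~\ref{lem-ubd}, forces $\tfrac16\|\nabla u_\ep\|_2^2\le C$, contradicting blow-up. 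This argument needs no bubble-counting, no Pohozaev identity, and is insensitive to the sign of $h_1$ at the atoms. If you want to pursue the quantization route, you would need to first establish (a) that atoms of $\mu_1$ can only occur where $h_1>0$, and (b) a full Li--Shafrir quantization for the mixed two-term equation, both of which are substantial additional work that the paper's Aubin-inequality argument bypasses.
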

\begin{proof}
It follows from \eqref{mu1mu2} and Lemma \ref{lem-bd} that $\text{supp}\mu_1$ is not empty. If $\text{supp}\mu_1$ has two or more different
points, then by Aubin's inequality (cf. \cite{CL91b}, Theorem 2.1): for any $\ep'>0$, there exists a positive constant $C=C(\ep')$ such that
\begin{align*}
\log\int_Me^{u_\ep}dv_g\leq\left(\frac{1}{32\pi}+\ep'\right)\int_M|\nabla u_\ep|^2dv_g+C.
\end{align*}
Choosing $\ep'=\frac{1}{96\pi}$, using \eqref{char-1} and Lemma \ref{lem-ubd}, we have
\begin{align*}
C\geq& J_{8\pi-\ep,\rho_2}(u_\ep)\\
    =& \frac{1}{2}\int_{M}|\nabla u_\ep|^2dv_g - (8\pi-\ep)\log\int_Mh_1e^{u_\ep}dv_g-\rho_2\log\int_Mh_2e^{-u_\ep}dv_g\\
  \geq& \frac{1}{6}\int_{M}|\nabla u_\ep|^2dv_g - C,
\end{align*}
which shows $\|\nabla u_\ep\|_2\leq C$ and this contradicts with $u_\ep$ blows up. Therefore, $\text{supp}\mu_1$
is a single point set and we finish the proof.
\end{proof}

Since of \eqref{mu1mu2}, we know $\text{supp}\mu_1\subset S$. Without loss of generality, we can assume $\text{supp}\mu_1=\{x_1\}$.
Notice the definition of $\mu_1$, we have
\begin{align}\label{mu1}
h_1\mu_1=\delta_{x_1}.
\end{align}

\begin{lem}\label{lem-gamma}
For any $x_l\in S$, we have $\gamma(\{x_l\})\geq4\pi$.
\end{lem}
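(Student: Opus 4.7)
My strategy is to prove Lemma \ref{lem-gamma} by contradiction. Suppose for some $x_l\in S$ we have $\gamma(\{x_l\})<4\pi$; I will deduce that $u_\ep$ is uniformly bounded in $L^\infty$ on a neighborhood of $x_l$, which contradicts $x_l\in S$ because then both $e^{u_\ep}$ and $e^{-u_\ep}$ would be locally bounded, forcing $\mu_1(\{x_l\})=\mu_2(\{x_l\})=0$.

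The core of the argument is a local Brezis--Merle analysis. Fix $r>0$ small enough that $\overline{B_{2r}(x_l)}\cap S=\{x_l\}$, so that $u_\ep$ is uniformly bounded on $\partial B_{2r}(x_l)$ by \eqref{uep-out}. Decompose $u_\ep=v_\ep+z_\ep$ on $B_{2r}(x_l)$, where $v_\ep$ solves the Dirichlet problem
\[
-\Delta v_\ep = (8\pi-\ep)\frac{h_1 e^{u_\ep}}{\int_M h_1 e^{u_\ep}\,dv_g} - \rho_2\frac{h_2 e^{-u_\ep}}{\int_M h_2 e^{-u_\ep}\,dv_g}-(8\pi-\ep)+\rho_2\text{ in }B_{2r}(x_l),
\]
with $v_\ep=0$ on $\partial B_{2r}(x_l)$, and $z_\ep$ absorbs the harmonic piece with boundary data from $u_\ep$; using Lemma \ref{lem-Ls} together with trace estimates, $z_\ep$ is uniformly bounded on $B_r(x_l)$. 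I then split $v_\ep=v_\ep^+-v_\ep^-$ into the Dirichlet solutions driven by the positive and negative parts of the right-hand side respectively (both nonnegative by the maximum principle).

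Using Lemma \ref{lem-bd} for denominator control, Lemma \ref{lem-single} (so that $\mu_1$ contributes only at $x_1$) and \eqref{mu1}, together with Lemma \ref{lem-ubd} to control the mass of $h_2e^{-u_\ep}$, one checks that as $\ep\to 0$ and $r\to 0$ the $L^1$ masses $\|f_\ep^\pm\|_{L^1(B_{2r}(x_l))}$ converge to the positive and negative parts of $\gamma(\{x_l\})$ respectively, up to background that vanishes with $r$. The assumption $\gamma(\{x_l\})<4\pi$ together with the bound $|\gamma(\{x_l\})|\geq 4\pi$ obtained in passing (needed to stay in $S$) then keeps at least one of these masses strictly below $4\pi$ for small $r$; by Brezis--Merle, $e^{p v_\ep^+}$ (resp.\ $e^{p v_\ep^-}$) is bounded in $L^1(B_r(x_l))$ for some $p>1$. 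Reinserting into \eqref{eq-uep-1} and applying standard elliptic $L^p\to L^\infty$ theory bootstraps to $u_\ep\in L^\infty(B_{r/2}(x_l))$, giving the contradiction.

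The main obstacle is the sign asymmetry: the naive Brezis--Merle output is only $|\gamma(\{x_l\})|\geq 4\pi$, whereas the lemma asserts the stronger $\gamma(\{x_l\})\geq 4\pi$. Ruling out $\gamma(\{x_l\})\leq -4\pi$ crucially relies on the partial-critical setting $\rho_2<8\pi$ combined with Lemma \ref{lem-ubd}: at $x_l\neq x_1$ a negative atom of size $\geq 4\pi$ would force $-u_\ep$ to form a concentration at a point where $h_2>0$ with local $\rho_2 h_2\mu_2$-mass at least $4\pi$, and the coupled use of the global bound $\log\int_M h_2 e^{-u_\ep}\,dv_g\leq C$ with the Brezis--Merle quantization applied to the $v_\ep=-u_\ep$ equation (which has only a subcritical coefficient $\rho_2<8\pi$ on the blowing-up side) excludes this possibility. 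Handling this sign issue cleanly is the technical heart of the lemma.
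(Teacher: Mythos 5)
Your intuition about \emph{what} drives the lemma (the partial-critical input $\rho_2<8\pi$ entering via Lemma \ref{lem-ubd}) is correct, but the argument as written has a genuine logical gap. In the opening paragraph you claim that $\gamma(\{x_l\})<4\pi$ forces $u_\ep$ to be uniformly bounded in $L^\infty$ near $x_l$. Since $x_l\in S$ means $|\gamma(\{x_l\})|\geq 4\pi$, your hypothesis is in fact $\gamma(\{x_l\})\leq -4\pi$; in that regime $-u_\ep$ does concentrate (the atom of $\mu_2$ at $x_l$ is strictly positive), so $u_\ep$ is \emph{not} locally bounded, and the conclusion ``$\mu_1(\{x_l\})=\mu_2(\{x_l\})=0$'' cannot be reached. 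Your own decomposition confirms this: with $\gamma(\{x_l\})\leq -4\pi$ only the positive-part mass $\|f_\ep^+\|_{L^1}$ drops below $4\pi$, so Brezis--Merle controls $e^{pv_\ep^+}$ (i.e.\ $e^{u_\ep}$) but says nothing about $v_\ep^-$, whose driving mass converges to $|\gamma(\{x_l\})|\geq 4\pi$. Reinserting into \eqref{eq-uep-1} then fails to bootstrap to $u_\ep\in L^\infty(B_{r/2}(x_l))$ precisely because the $h_2e^{-u_\ep}$ term is not yet controlled, which is the whole point of the lemma.

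You recognize this as ``the main obstacle,'' but the proposed patch---``Brezis--Merle quantization applied to the $-u_\ep$ equation''---does not close the gap. Quantization of the concentration mass (say to $8\pi m$) is compatible with a negative atom, and the normalization $\int_M h_2 e^{-u_\ep}\,dv_g$ being bounded does not a priori force the local Brezis--Merle mass of $\rho_2 h_2 e^{-u_\ep}/\int_M h_2 e^{-u_\ep}\,dv_g$ below $8\pi$, since $\mu_2(M)$ is not a probability measure. What actually kills the negative atom is the \emph{integrability} of $e^{-u_\ep}$: by Lemmas \ref{lem-bd} and \ref{lem-ubd} one has $\int_M e^{-u_\ep}\,dv_g\leq C$. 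The paper then compares $-u_\ep$ from below with the solution $w_\ep$ of the corresponding local Dirichlet problem with boundary data $-C_0$; the maximum principle gives $w_\ep\leq -u_\ep$, the weak limit $w_0$ behaves like $\frac{\gamma(\{x_l\})}{2\pi}\log|x-x_l|$ near $x_l$ (this uses $h_2(x_l)>0$ to keep the nonatomic part of $-\Delta w_0$ nonnegative), and Fatou's lemma forces $\int_{B_r(x_l)}|x-x_l|^{\gamma(\{x_l\})/(2\pi)}\,dx<\infty$, i.e.\ $\gamma(\{x_l\})>-4\pi$. Combined with $|\gamma(\{x_l\})|\geq 4\pi$ this yields $\gamma(\{x_l\})\geq 4\pi$. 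Your decomposition could, with care, be reshaped into the same argument (note $e^{-u_\ep}\geq c\,e^{v_\ep^-}$ once $v_\ep^+$ and $z_\ep$ are shown bounded near $x_l$, and then $v_\ep^-\rightharpoonup v_0^-$ with explicit logarithmic singularity), but as stated the false intermediate claim and the appeal to quantization leave the proof incomplete.
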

\begin{proof}
In view of \eqref{uep-out}, for any $x_l\in S$ and sufficiently small $r>0$, there exists some positive constant $C_0$ such that
$-u_\ep\mid_{\p B_r(x_l)}\geq -C_0$. Consider the solution of
\begin{align*}
\begin{cases}
-\Delta w_\ep = \rho_2\left(\frac{h_2e^{-u_\ep}}{\int_Mh_2e^{-u_\ep}dv_g}-1\right) - (8\pi-\ep)\left(\frac{h_1e^{u_\ep}}{\int_Mh_1e^{u_\ep}dv_g}-1\right)
~~\text{in}~~B_r(x_l),\\
w_\ep=-C_0~~\text{on}~~\p B_r(x_l).
\end{cases}
\end{align*}
By the maximum principle, we have $w_\ep\leq-u_\ep$ in $B_r(x_l)$. It follows from Lemma \ref{lem-bd} that
$\rho_2\frac{h_2e^{-u_\ep}}{\int_Mh_2e^{-u_\ep}dv_g}-(8\pi-\ep)\frac{h_1e^{u_\ep}}{\int_Mh_1e^{u_\ep}dv_g}$ is bounded in
$L^1(B_r(x_l))$, then $w_\ep\rightharpoonup w_0$ weakly in $W^{1,s}(B_r(x_l))$ as $\ep\to0$ for any $s\in(1,2)$. Here $w_0$ is the solution of
\begin{align*}
\begin{cases}
-\Delta w_0 = \rho_2h_2\beta_2e^{-G}+8\pi-\rho_2-\gamma(\{x_l\})\delta_{x_l}~~\text{in}~~B_r(x_l),\\
w_0=-C_0~~\text{on}~~\p B_r(x_l).
\end{cases}
\end{align*}
If $\gamma(\{x_l\})<0$, since of \eqref{mu1}, one has $h_2(x_l)>0$. For simplicity, we assume $r$ is
small enough to ensure $h_2(x)>0$ in $B_r(x_l)$. Hence,
\begin{align*}
\rho_2h_2\beta_2e^{-G}+8\pi-\rho_2>0~~\text{in}~~B_r(x_l)
\end{align*}
and then $-\Delta w_0 \geq -\gamma(\{x_l\})\delta_{x_l}$ in $B_r(x_l)$. By the maximum principle, we have
\begin{align*}
w_0\geq\frac{1}{2\pi}\gamma(\{x_l\})\log|x-x_l|-C ~~\text{in}~~B_r(x_l).
\end{align*}
So
\begin{align}\label{gamma-1}
e^{w_0}\geq C|x-x_l|^{\frac{\gamma({x_l})}{2\pi}}.
\end{align}
 By Lemmas \ref{lem-bd} and \ref{lem-ubd}, one obtains
\begin{align}\label{gamma-2}
\int_Me^{-u_\ep}dv_g\leq C.
\end{align}
It follows from Fatou's lemma and \eqref{gamma-2}
\begin{align}\label{gamma-3}
\int_{B_r(x_l)}e^{w_0}dv_g\leq\lim_{\ep\to0}\int_{B_r(x_l)}e^{w_\ep}dv_g\leq\lim_{\ep\to0}\int_{B_r(x_l)}e^{-u_\ep}dv_g\leq C.
\end{align}
Taking \eqref{gamma-1} into \eqref{gamma-3} we obtain that
\begin{align*}
\gamma(\{x_l\})>-4\pi,~~\forall l=1,2,\cdots,L.
\end{align*}
Recall the definition of $S$, we know $\gamma(\{x_l\})\geq4\pi$. This finishes the proof.
\end{proof}

Based on this lemma, we can further show that in the partial critical case with $\rho_1=8\pi$ and $\rho_2\in(0,8\pi)$, $-u_\ep$ will
not blow up. Precisely, there holds
\begin{lem}\label{lem--uep}
If $u_\ep$ blows up, then we have $-u_\ep\leq C$.
\end{lem}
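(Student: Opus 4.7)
The plan is to argue by contradiction. Suppose $\max_M(-u_\ep)\to\infty$ along a subsequence. Since Lemma \ref{lem-ubd} gives $\log\int_M h_2e^{-u_\ep}dv_g\leq C$, this forces $n_\ep\to\infty$, which is equivalent to $\mu_2$ having an atom at some point $y_*\in S$. Choose $y_\ep\to y_*$ attaining $\max_M(-u_\ep)$. Using Lemma \ref{lem-gamma} together with $\mathrm{supp}\,\mu_1=\{x_1\}$ (Lemma \ref{lem-single}), I split into two cases according to whether $y_*=x_1$.

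Case A: $y_*\neq x_1$. Then $\mu_1(\{y_*\})=0$, so $\gamma(\{y_*\})=-\rho_2 h_2(y_*)\mu_2(\{y_*\})\geq 4\pi$ forces $h_2(y_*)<0$. Pick $r>0$ small so that $h_2<0$ on $\overline{B_r(y_*)}$, $\overline{B_r(y_*)}\cap S=\{y_*\}$, and $u_\ep$ stays bounded above on $\overline{B_r(y_*)}$ (no concentration of $u_\ep$ at $y_*$ since $\mathrm{supp}\,\mu_1=\{x_1\}$). Rewriting the equation for $v_\ep:=-u_\ep$ on this ball: the term $\rho_2 h_2 e^{v_\ep}/\int_M h_2 e^{-u_\ep}dv_g$ is $\leq 0$ (as $h_2<0$ and the denominator is positive and bounded by Lemmas \ref{lem-bd} and \ref{lem-ubd}); the cross-term $(8\pi-\ep)h_1 e^{u_\ep}/\int_M h_1 e^{u_\ep}dv_g$ tends uniformly to $0$ since $u_\ep$ is bounded above on $\overline{B_r(y_*)}$ while $\int_M h_1e^{u_\ep}dv_g\to\infty$; and the remaining constant is bounded. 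Hence $-\Delta v_\ep\leq C$ on $B_r(y_*)$. Since $v_\ep|_{\partial B_r(y_*)}\leq C$ by \eqref{uep-out}, comparison with the barrier solving $-\Delta\psi=C$, $\psi|_{\partial B_r(y_*)}=0$ yields $v_\ep\leq C'$ on $B_r(y_*)$, contradicting $v_\ep(y_\ep)\to\infty$.

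Case B: $y_*=x_1$. Perform the standard Liouville rescaling $\tilde v_\ep(z):=-u_\ep(y_\ep+\lambda_\ep z)+u_\ep(y_\ep)$ with $\lambda_\ep:=e^{u_\ep(y_\ep)/2}\to 0$ in normal coordinates centered at $y_\ep$. Then $\tilde v_\ep\leq 0=\tilde v_\ep(0)$, and by elliptic regularity $\tilde v_\ep\to\tilde v$ in $C^2_{\mathrm{loc}}(\R^2)$ with
\[
-\Delta\tilde v=\frac{\rho_2 h_2(x_1)}{A}e^{\tilde v}\ \text{on}\ \R^2,\qquad A:=\lim_{\ep\to 0}\int_M h_2 e^{-u_\ep}dv_g\in(0,\infty),
\]
the cross-term and constant sources vanishing because $\lambda_\ep^2\to 0$ and $\int_M h_1 e^{u_\ep}dv_g\to\infty$; moreover Fatou applied to $\int_M e^{-u_\ep}dv_g\leq C$ gives $\int_{\R^2}e^{\tilde v}\,dz<\infty$. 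If $h_2(x_1)>0$, the Chen--Li classification of entire Liouville solutions gives the sharp mass $\rho_2 h_2(x_1)\mu_2(\{x_1\})\geq 8\pi$, hence $\gamma(\{x_1\})=8\pi-\rho_2 h_2(x_1)\mu_2(\{x_1\})\leq 0$, contradicting Lemma \ref{lem-gamma}. If $h_2(x_1)\leq 0$, the limit equation makes $\tilde v$ subharmonic (resp.\ harmonic), and since $\tilde v\leq 0=\tilde v(0)$, the strong maximum principle forces $\tilde v\equiv 0$, which is incompatible with the PDE itself (if $h_2(x_1)<0$) or with $\int_{\R^2}e^{\tilde v}\,dz<\infty$ (if $h_2(x_1)=0$).

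The main obstacle is the blow-up analysis in Case B: one must center the rescaling precisely at the minimum point $y_\ep$ of $u_\ep$ so that the positive spike of $u_\ep$ escapes to spatial infinity in the rescaled picture and the limit equation sees only the negative bubble of $v_\ep=-u_\ep$; it is then essential to invoke the sharp $8\pi$-quantization rather than the weaker $4\pi$ lower bound from Brezis--Merle, because only the former produces the clean contradiction $\gamma(\{x_1\})\leq 0<4\pi$.
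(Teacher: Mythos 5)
Your proof goes by contradiction with a two-case split and, in Case B, a full Liouville rescaling and Chen--Li classification. The paper's argument is considerably more direct: it fixes $x_l \in S$, uses Lemma~\ref{lem-gamma} to claim a one-sided bound $-\Delta(-u_\ep) \leq (8\pi-\ep)-\rho_2$ in $B_r(x_l)$, and then applies the local maximum principle (Theorem 8.17 of Gilbarg--Trudinger) with the $W^{1,s}$ bound from Lemma~\ref{lem-Ls} to conclude $\sup_{B_{r/2}(x_l)}(-u_\ep) \leq C$, combining with \eqref{uep-out} to cover all of $M$. No rescaling or blow-up classification is invoked at this stage. So your route is genuinely different in spirit.

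However, I do not think Case B is complete. You perform a rescaling at the minimum point $y_\ep$ of $u_\ep$ with scale $\lambda_\ep = e^{u_\ep(y_\ep)/2}$ and assert that ``the positive spike of $u_\ep$ escapes to spatial infinity.'' This is precisely the step that needs proof, and you flag it as an obstacle without supplying the argument. The cross-term $\lambda_\ep^2\,(8\pi-\ep)\frac{h_1 e^{u_\ep}}{\int_M h_1 e^{u_\ep}\,dv_g}$ in the rescaled equation has $L^1_{\mathrm{loc}}$ mass on $B_R(0)$ equal to $(8\pi-\ep)\int_{B_{R\lambda_\ep}(y_\ep)}\frac{h_1 e^{u_\ep}}{\int_M h_1 e^{u_\ep}}\,dv_g$, and since $h_1\mu_1=\delta_{x_1}=\delta_{y_*}$ in Case B this could converge to $8\pi$ rather than $0$ if $\mathrm{dist}(x_\ep,y_\ep)=O(\lambda_\ep)$. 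In that situation the limit is not $-\Delta\tilde v=\frac{\rho_2 h_2(x_1)}{A}e^{\tilde v}$ but rather picks up a $-8\pi\delta_{z_*}$ term, and the $8\pi$-quantization you rely on fails; the clean contradiction $\gamma(\{x_1\})\leq 0$ then does not follow. One would need to first establish a separation result of the form $\mathrm{dist}(x_\ep,y_\ep)/\lambda_\ep\to\infty$, which your argument does not do.

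A secondary concern is the assertion in Case A that ``$u_\ep$ stays bounded above on $\overline{B_r(y_*)}$'' on the basis that $\mathrm{supp}\,\mu_1=\{x_1\}$. That gives vanishing $L^1$ mass of $\frac{e^{u_\ep}}{\int_M h_1 e^{u_\ep}}\,dv_g$ near $y_*$, but since $y_*\in S$ the estimate \eqref{uep-out} does not apply, and the positive part of the source for $u_\ep$ near $y_*$ is $-\rho_2\frac{h_2 e^{-u_\ep}}{\int_M h_2 e^{-u_\ep}\,dv_g}$ (positive since $h_2(y_*)<0$), which carries mass $\geq\gamma(\{y_*\})\geq 4\pi$, exactly at the Brezis--Merle threshold. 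A pointwise upper bound on $u_\ep$ near $y_*$ therefore does not follow automatically from the support statement and needs a separate argument. The paper's use of the one-sided bound on $-\Delta(-u_\ep)$ together with the $L^s$ control of $(-u_\ep)^+$ from Lemma~\ref{lem-Ls} sidesteps this issue because the local maximum principle applied directly to $-u_\ep$ does not require an a priori upper bound on $u_\ep$.
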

\begin{proof}
In view of Lemma \ref{lem-gamma}, $\gamma(\{x_l\})\geq4\pi$ for any $x_l\in S$. Fixed $x_l\in S$, for sufficiently small $r>0$, there holds
\begin{align*}
-\Delta(-u_\ep)=&\rho_2\left(\frac{h_2e^{-u_\ep}}{\int_Mh_2e^{-u_\ep}dv_g}-1\right) - (8\pi-\ep)\left(\frac{h_1e^{u_\ep}}{\int_Mh_1e^{u_\ep}dv_g}-1\right)\\
\leq&(8\pi-\ep)-\rho_2~~\text{in}~~B_r(x_l).
\end{align*}
By Theorem 8.17 in \cite{GT}, we have for $s\in(1,2)$ that
\begin{align}\label{-uep-1}
\sup_{B_{r/2}(x_l)}(-u_\ep)\leq& C(\|(-u_\ep)^+\|_{L^s(B_r(x_l))}+C)\nonumber\\
\leq& C(\|u_\ep\|_{L^s(M)}+C)\nonumber\\
\leq& C(\|\nabla u_\ep\|_{L^s(M)}+C)\nonumber\\
\leq& C,
\end{align}
where in the last inequality we have used Lemma \ref{lem-Ls}.  Combining \eqref{uep-out} and \eqref{-uep-1}, we obtain $-u_\ep\leq C$
and finish the proof.
\end{proof}

For any $x_l\in S$, we have by Lemma \ref{lem--uep} that
\begin{align*}
\mu_2(\{x_l\})=\lim_{r\to0}\lim_{\ep\to0}\frac{\int_{B_r(x_l)}e^{-u_\ep}dv_g}{\int_Mh_2e^{-u_\ep}dv_g}=0.
\end{align*}
Hence, we obtain
\begin{align*}
\mu_2 = \beta_2e^{-G}.
\end{align*}

Since $\text{supp}\mu_1=\{x_1\}$ and $\mu_{2}(\{x_l\})=0$ for any $x_l\in S$, we have $S=\{x_1\}$. Recall that $h_1\mu_1=\delta_{x_1}$, there exists
$x_\ep\in M$ such that $u_\ep(x_\ep)=\max_Mu_\ep$. It is obvious that $x_\ep\to x_1$ as $\ep\to0$. Choosing an isothermal coordinate system around $x_1$,
with similar arguments like the proof of Lemma 2.5 in \cite{DJLW97}, we have
\begin{align}\label{bubble-1}
u_\ep(x_\ep+r_\ep x)-u_\ep(x_\ep)\to-2\log\left(1+\pi h_1(x_1)|x|^2\right)~~\text{in}~~C^{\infty}_{\text{loc}}(\mathbb{R}^2)
\end{align}
as $\ep\to0$, where $r_\ep=e^{-m_\ep/2}$. By Lemma \ref{lem--uep} and the Lebesgue convergence theorem, we have $\beta_2=\frac{1}{\int_Mh_2e^{-G}dv_g}$.
Concluding, one obtains $u_\ep\rightharpoonup G$ in $W^{1,s}(M)$ as $\ep\to0$ for any $s\in(1,2)$, where $G$ satisfies
\begin{align}\label{green}
\begin{cases}
-\Delta G = 8\pi(\delta_{x_1}-1)-\rho_2(\beta_2h_2e^{-G}-1)~~\text{on}~~M,\\
\int_M G dv_g = 0.
\end{cases}
\end{align}
The standard elliptic estimates together with \eqref{uep-out} yield
\begin{align}\label{uGreen}
u_\ep\to G~~\text{in}~~C_{\text{loc}}^{\infty}(\mathbb{R}^2\setminus\{x_1\})
\end{align}
as $\ep\to0$. In an isothermal coordinate system around $x_1$, one has by the elliptic estimates that
\begin{align}\label{grenn-exp}
G(x,x_1)=-4\log r&+A(x_1)+\psi(x),
\end{align}
where $r(x)=\text{dist}(x,x_1)$, $\psi$ is a smooth function which is zero at $x_1$.

\subsection{The lower bound for the partial critical case}

In this subsection, we assume $u_\ep$ blows up and derive an explicit lower bound for $J_{8\pi,\rho_2}$ when $\rho_2\in(0,8\pi)$.

Let $\delta>0$ small enough such that \eqref{grenn-exp} holds in $B_\delta(x_\ep)$. In the sequel, we denote by $o_\ep(1)$ (resp. $o_L(1)$; $o_{\delta}(1)$) the terms which tends to $0$ as $\ep\to0$ (resp. $L\to\infty$; $\delta\to0$).

Recalling that $r_\ep=e^{-m_\ep/2}$. We assume $\ep$ is small enough such that $\delta>Lr_\ep$ for any fixed $L>0$. Then
\begin{align*}
\int_{M}|\nabla u_\ep|^2dv_g = &\int_{M\setminus B_\delta(x_\ep)}|\nabla u_\ep|^2dv_g
         +\int_{B_\delta(x_\ep)\setminus B_{Lr_\ep}(x_\ep)}|\nabla u_\ep|^2dv_g\nonumber\\
         &+\int_{B_{Lr_\ep}(x_\ep)}|\nabla u_\ep|^2dv_g.
\end{align*}
By \eqref{bubble-1}, we have
\begin{align}\label{lower-2}
\int_{B_{Lr_\ep}(x_\ep)}|\nabla u_\ep|^2dv_g = 16\pi\log\left(1+\pi h_1(x_1)L^2\right)-16\pi+o_\ep(1)+o_L(1).
\end{align}
It follows from \eqref{green}, \eqref{uGreen} and \eqref{grenn-exp} that
\begin{align}\label{lower-3}
\int_{M\setminus B_\delta(x_\ep)}|\nabla u_\ep|^2dv_g =& \int_{M\setminus B_{\delta}(x_1)}|\nabla G|^2dv_g+o_\ep(1)\nonumber\\
=& -\rho_2\beta_2\int_M h_2Ge^{-G}dv_g - \int_{\partial B_{\delta}(x_1)}G\frac{\partial G}{\partial n}ds_g\nonumber\\
&+o_\delta(1)+o_\ep(1)\nonumber\\
=& -\rho_2\beta_2\int_M h_2Ge^{-G}dv_g -32\pi\log\delta+8\pi A(x_1)\nonumber\\
&+o_\delta(1)+o_\ep(1).
\end{align}
To estimate $\int_{B_{\delta}(x_\ep)\setminus B_{Lr_\ep}(x_\ep)}|\nabla u_\ep|^2dv_g$, we shall follow \cite{LL05} closely. Denote by
\begin{align*}
a_\ep = \inf_{\partial B_{Lr_\ep}(x_\ep)}u_\ep,~~~~b_\ep = \sup_{\partial B_{\delta}(x_\ep)}u_\ep.
\end{align*}
We set $a_\ep-b_\ep=u_\ep(x_\ep)+d_\ep$. Then
\begin{align*}
d_\ep = -2\log\left(1+\pi h_1(x_1)L^2\right)-\sup_{\partial B_{\delta}(x_1)}G+o_{\ep}(1).
\end{align*}
Let $f_\ep=\max\{\min\{u_\ep,a_\ep\},b_\ep\}$. We have
\begin{align*}
\int_{B_{\delta}(x_\ep)\setminus B_{Lr_\ep}(x_\ep)}|\nabla u_\ep|^2dv_g
\geq&\int_{B_{\delta}(x_\ep)\setminus B_{Lr_\ep}(x_\ep)}|\nabla f_\ep|^2dv_g\\
=&\int_{B_{\delta}(x_\ep)\setminus B_{Lr_\ep}(x_\ep)}|\nabla_{\mathbb{R}^2} f_\ep|^2dx\\
\geq&\inf_{\Psi|_{\partial B_{Lr_\ep}(0)}=a_\ep,\Psi|_{\partial B_{\delta}(0)}=b_\ep}\int_{B_{\delta}(0)\setminus B_{Lr_\ep}(0)}|\nabla_{\mathbb{R}^2} \Psi|^2dx.
\end{align*}
By the Dirichlet's principle, we know
$$\inf_{\Psi|_{\partial B_{Lr_\ep}(0)}=a_\ep,\Psi|_{\partial B_{\delta}(0)}=b_\ep}\int_{B_{\delta}(0)\setminus B_{Lr_\ep}(0)}|\nabla_{\mathbb{R}^2} \Psi|^2dx$$
is uniquely attained by the harmonic function
\begin{align*}
\begin{cases}
&-\Delta_{\mathbb{R}^2}\phi = 0,\\
&\phi|_{\partial B_{Lr_\ep}(0)}=a_\ep,\phi|_{\partial B_{\delta}(0)}=b_\ep.
\end{cases}
\end{align*}
Thus,
\begin{align*}
\phi=\frac{a_\ep-b_\ep}{-\log Lr_\ep+\log\delta}\log r - \frac{a_\ep\log\delta-b_\ep\log Lr_\ep}{-\log Lr_\ep+\log\delta},
\end{align*}
and then
\begin{align*}
\int_{B_{\delta}(0)\setminus B_{Lr_\ep}(0)}|\nabla_{\mathbb{R}^2}\phi|^2dx = \frac{4\pi(a_\ep-b_\ep)^2}{-\log(Lr_\ep)^2+\log\delta^2}.
\end{align*}
Concluding, we have
\begin{align*}
\int_{B_{\delta}(x_\ep)\setminus B_{Lr_\ep}(x_\ep)}|\nabla u_\ep|^2dv_g
\geq \frac{4\pi(a_\ep-b_\ep)^2}{-\log(Lr_\ep)^2+\log\delta^2}.
\end{align*}
Since $-\log(r_\ep)^2=m_\ep$, we obtain
\begin{align}\label{eq-neck-1}
\int_{B_{\delta}(x_\ep)\setminus B_{Lr_\ep}(x_\ep)}|\nabla u_\ep|^2dv_g
\geq& 4\pi\frac{\left(m_\ep+\log\int_Mh_1e^{u_\ep}dv_g+d_\ep\right)^2}{m_\ep-\log L^2+\log\delta^2}.
\end{align}
By Lemma \ref{lem-ubd}, one has
\begin{align}\label{eq-neck-2}
&\frac{1}{2}\int_{B_{\delta}(x_\ep)\setminus B_{Lr_\ep}(x_\ep)}|\nabla u_\ep|^2-(8\pi-\ep)\log\int_Mh_1e^{u_\ep}dv_g\nonumber\\
\leq& J_{8\pi-\ep,\rho_2}(u_\ep)+\rho_2\log\int_Mh_2e^{-u_\ep}dv_g\leq C.
\end{align}
It follows from \eqref{eq-neck-1} and \eqref{eq-neck-2} that
\begin{align}\label{eq-neck-3}
2\pi\frac{\left(m_\ep+\log\int_Mh_1e^{u_\ep}dv_g+d_\ep\right)^2}{m_\ep-\log L^2+\log\delta^2}-(8\pi-\ep)\log\int_Mh_1e^{u_\ep}dv_g\leq C.
\end{align}
Recalling that $\log\int_Mh_1e^{u_\ep}dv_g\to+\infty$ and $m_\ep\to+\infty$, we get from \eqref{eq-neck-3}
\begin{align}\label{eq-neck-4}
\frac{\log\int_Mh_1e^{u_\ep}dv_g}{m_\ep}=1+o_{\ep}(1)
\end{align}
by dividing both sides by $m_\ep$ and letting $\ep$ tend to $0$. Taking \eqref{eq-neck-4} into \eqref{eq-neck-1}, we have
\begin{align}\label{eq-neck-5}
\int_{B_{\delta}(x_\ep)\setminus B_{Lr_\ep}(x_\ep)}|\nabla u_\ep|^2dv_g
\geq&4\pi\frac{\left(m_\ep+\log\int_Mh_1e^{u_\ep}dv_g\right)^2}{m_\ep}\nonumber\\
&+16\pi\left(d_\ep+\log L^2-\log\delta^2+o_{\ep}(1)\right).
\end{align}

It follows from \eqref{lower-2}, \eqref{lower-3} and \eqref{eq-neck-5} that
\begin{align}\label{lower}
J_{8\pi-\ep,\rho_2}(u_\ep)\geq&-8\pi-8\pi\log\pi-4\pi\left(A(x_1)+2\log h_1(x_1)\right)\nonumber\\
&+2\pi\frac{\left(m_\ep-\log\int_Mh_1e^{u_\ep}dv_g\right)^2}{m_\ep}-\frac{\rho_2}{2}\beta_2\int_Mh_2Ge^{-G}dv_g\nonumber\\
&-\rho_2\log\int_Mh_2e^{-G}dv_g+o_\ep(1)+o_L(1)+o_{\delta}(1)\nonumber\\
\geq&-8\pi-8\pi\log\pi-4\pi\left(A(x_1)+2\log h_1(x_1)\right)\nonumber\\
&-\frac{\rho_2}{2}\beta_2\int_Mh_2Ge^{-G}dv_g-\rho_2\log\int_Mh_2e^{-G}dv_g\nonumber\\
&+o_\ep(1)+o_L(1)+o_{\delta}(1).
\end{align}
By letting $\ep\to0$ in \eqref{lower} first, then $L\to+\infty$ and then $\delta\to0$, we obtain finally that
\begin{align}\label{lower-bound}
\inf_{\mathcal{E}}J_{8\pi,\rho_2}(u)\geq&-8\pi-8\pi\log\pi-4\pi\left(A(x_1)+2\log h_1(x_1)\right)\nonumber\\
&-\frac{\rho_2}{2}\beta_2\int_Mh_2Ge^{-G}dv_g-\rho_2\log\int_Mh_2e^{-G}dv_g\nonumber\\
\geq&-8\pi-8\pi\log\pi-4\pi\max_{x\in M_1^+}(A(x)+2\log h_1(x))\nonumber\\
&-\frac{\rho_2}{2}\beta_2\int_Mh_2Ge^{-G}dv_g-\rho_2\log\int_Mh_2e^{-G}dv_g.
\end{align}

\subsection{Test function for the partial critical case}
In this subsection, we construct a sequence of test functions  $\phi_\ep$. We shall prove that, under assumption \eqref{cond-1},
\begin{align*}
J_{8\pi,\rho_2}(\phi_\ep)<&-8\pi-8\pi\log\pi-4\pi\max_{x\in M_1^+}(A(x)+2\log h_1(x))\nonumber\\
&-\frac{\rho_2}{2}\beta_2\int_Mh_2Ge^{-G}dv_g-\rho_2\log\int_Mh_2e^{-G}dv_g,
\end{align*}
for $\ep>0$ sufficiently small. This contradicts with \eqref{lower-bound}.

Suppose that $A(p)+2\log h_1(p)=\max_{x\in M_1^+}(A(x)+2\log h_1(x))$.
Let $(\Omega;(x^1,x^2))$ be an isothermal coordinate system around $p$ and we assume the metric to be
$$g|_\Omega = e^{\phi}\left((dx^1)^2+(dx^2)^2\right),$$
and
$$\phi=b_1(p)x^1+b_2(p)x^2+c_1(p)(x^1)^2+c_2(p)(x^2)^2+c_{12}(p)x^1x^2+O(r^3),$$
where $r(x^1,x^2)=\sqrt{(x^1)^2+(x^2)^2}$.
Moreover, we assume near $p$ that
\begin{align*}
G(x,p)=-4\log r+A(p)+&\lambda(p)x^1+\nu(p)x^2+\alpha(p)(x^1)^2+\beta(p)(x^2)^2+\xi(p)x^1x^2\\
    &+\ell(x^1,x^2)+O(r^4).
\end{align*}
It is well known that
\begin{align*}
K(p)=-(c_1(p)+c_2(p)),\\
|\nabla u|^2dv_g=|\nabla u|^2dx^1dx^2,
\end{align*}
and
\begin{align*}
\frac{\p u}{\p n}dS_g=\frac{\p u}{\p r}rd\theta.
\end{align*}
For $\alpha(p)$ and $\beta(p)$, we have the following lemma:
\begin{lem}\label{lem-alpha1}
We have
\begin{align*}
\alpha(p)+\beta(p)=4\pi-\frac{\rho_2}{2}.
\end{align*}
\end{lem}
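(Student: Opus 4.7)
The plan is to read off the sum $\alpha(p)+\beta(p)$ directly from the equation satisfied by $G(\cdot,p)$ (an analogue of \eqref{green} with the pole $p$ in place of $x_1$), by expanding in the isothermal chart $(\Omega;(x^1,x^2))$ around $p$ and extracting the constant term in the Taylor expansion at $x=0$. The crucial observation is that the nonlinear term $\rho_2\beta_2 h_2 e^{-G}$ is suppressed near $p$: since $G\sim-4\log r$, one has $e^{-G}\sim r^4e^{-A(p)}\to 0$, so only the constant contribution $\rho_2-8\pi$ on the right-hand side of \eqref{green} survives at $x=0$ and thus determines $\alpha(p)+\beta(p)$.

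Concretely, in the isothermal coordinates, $dv_g=e^\phi\,dx^1dx^2$ and $\Delta_g=e^{-\phi}\Delta_{\mathbb{R}^2}$, so the equation for $G$ becomes the distributional identity
\begin{align*}
-\Delta_{\mathbb{R}^2} G \;=\; 8\pi\,\delta_{p} \;+\; (\rho_2-8\pi)\,e^\phi \;-\; \rho_2\beta_2\,h_2\,e^{\phi-G}\quad\text{on }\Omega.
\end{align*}
Since $\Delta_{\mathbb{R}^2}(-4\log r)=-8\pi\,\delta_{p}$ in $\Omega$, the regular part $R(x):=G(x,p)+4\log r$ is smooth near the origin and satisfies the pointwise equation
\begin{align*}
\Delta_{\mathbb{R}^2} R \;=\; (8\pi-\rho_2)\,e^\phi \;+\; \rho_2\beta_2\,h_2\,e^{\phi-G}.
\end{align*}

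Evaluating both sides at $x=0$: from the prescribed expansion, $R(x)=A(p)+\lambda(p)x^1+\nu(p)x^2+\alpha(p)(x^1)^2+\beta(p)(x^2)^2+\xi(p)x^1x^2+\ell(x^1,x^2)+O(r^4)$, where $\ell$ collects terms of order $O(r^3)$ so that $\Delta_{\mathbb{R}^2}\ell|_0=0$. Hence $\Delta_{\mathbb{R}^2} R(0)=2\alpha(p)+2\beta(p)$. On the right-hand side, $\phi(0)=0$ yields $e^{\phi(0)}=1$, while $e^{(\phi-G)(x)}=e^{\phi(x)}\,r^4\,e^{-R(x)}\to 0$ as $x\to 0$, so the nonlinear term drops out. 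Combining these yields $2\alpha(p)+2\beta(p)=8\pi-\rho_2$, which is the claim after dividing by $2$.

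I do not anticipate a genuine obstacle: the argument is a direct Taylor expansion once the singular part $-4\log r$ has been isolated. The only delicate bookkeeping is transferring between $\Delta_g$ and $\Delta_{\mathbb{R}^2}$ via the conformal factor $e^\phi$ and identifying the Dirac mass $\delta_p$ (as a measure against $dv_g$) with the Euclidean Dirac mass at the origin of the chart, both of which are harmless because $\phi(0)=0$ in the chosen isothermal coordinates.
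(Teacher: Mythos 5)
Your argument is correct and is essentially the same as the paper's: write the equation for $G$ in the isothermal chart, isolate the singular $-4\log r$ part, note that $\Delta_{\mathbb{R}^2}$ of the regular part at the origin equals $2(\alpha(p)+\beta(p))$, and kill the nonlinear term by observing $e^{-G}=O(r^4)$. If anything you are slightly more careful than the paper about the direction of the conformal factor (the paper writes $e^{-\phi}$ where $e^{\phi}$ should appear, an immaterial slip since $\phi(0)=0$), and about explicitly transferring the Dirac mass from $dv_g$ to Lebesgue measure, but the underlying computation is identical.
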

\begin{proof}
Near $p$, we have that
\begin{align*}
2\alpha(p)+2\beta(p)+O(r)=\Delta_{\mathbb{R}^2}G(x,p)=e^{-\phi}[8\pi+\rho_2(\beta_2h_2e^{-G}-1)].
\end{align*}
By letting $r\to0$ in both sides and noticing $e^{-G(x,p)}=O(r^4)$ near $p$, we finish the proof.
\end{proof}

We set
\begin{align*}
w(x)=-2\log(1+\pi|x|^2)
\end{align*}
and define
\begin{align*}
\phi_\ep=
\begin{cases}
w(\frac{x}{\ep})+\lambda(p)r\cos\theta+\nu(p)r\sin\theta, &x\in B_{L\ep}(p),\\
G-\eta H+4\log(L\ep)-2\log(1+\pi L^2)-A(p),  &x\in B_{2L\ep}(p)\setminus B_{L\ep}(p),\\
G+4\log(L\ep)-2\log(1+\pi L^2)-A(p),&\text{otherwise}.
\end{cases}
\end{align*}
Here,
$$H=G+4\log r-A(p)-\lambda(p)r\cos\theta-\nu(p)r\sin\theta$$
and $\eta$ is a cut-off function which equals $1$ in $B_{L\ep}(p)$, equals $0$ in $M\setminus B_{2L\ep}(p)$ and satisfies $|\nabla\eta|\leq\frac{C}{L\ep}$.

We have
\begin{align}\label{energy}
\int_{M}|\nabla\phi_\ep|^2dv_g=&\int_{B_{L\ep}(p)}|\nabla\phi_\ep|^2dv_g+\int_{M\setminus B_{L\ep}(p)}|\nabla G|^2dv_g\nonumber\\
&-2\int_{M\setminus B_{L\ep}(p)}\nabla G\nabla(\eta H)dv_g+\int_{M\setminus B_{L\ep}(p)}|\nabla(\eta H)|^2dv_g.
\end{align}
By direct calculations, one obtains
\begin{align}\label{energy-1}
\int_{B_{L\ep}(p)}|\nabla\phi_\ep|^2dv_g
=&\int_{B_L(0)}|\nabla_{\mathbb{R}^2} w|^2dx^1dx^2+\pi(\lambda(p)^2+\nu(p)^2)(L\ep)^2\nonumber\\
=&16\pi\log(1+\pi L^2)-\frac{16\pi^2L^2}{1+\pi L^2}+\pi(\lambda(p)^2+\nu(p)^2)(L\ep)^2
\end{align}
and
\begin{align}\label{energy-2}
\int_{M\setminus B_{L\ep}(p)}|\nabla(\eta H)|^2dv_g=\int_{B_{2L\ep}(p)\setminus B_{L\ep}(p)}O(r^2)dv_g=O((L\ep)^4).
\end{align}
Using $\int_0^{2\pi}\ell(r,\theta)d\theta=0$ and Lemma \ref{lem-alpha1}, one has
\begin{align}\label{energy-4}
-2\int_{M\setminus B_{L\ep}(p)}\nabla G\nabla(\eta H)dv_g
=-8\pi\left(4\pi-\frac{\rho_2}{2}\right)(L\ep)^2+O((L\ep)^4).
\end{align}
By \eqref{grenn-exp}, one has
$$\int_{B_{L\ep}(p)}h_2Ge^{-G}dv_g=O((L\ep)^4).$$
This together with equation \eqref{green} yields
\begin{align}\label{energy-3}
\int_{M\setminus B_{L\ep}(p)}|\nabla G|^2dv_g
=& -\int_{M\setminus B_{L\ep}(p)}G\Delta Gdv_g-\int_{\p B_{L\ep}(p)}G\frac{\p G}{\p n}dS_g\nonumber\\
=&(8\pi-\rho_2)\int_{B_{L\ep}(p)}Gdv_g - \rho_2\beta_2\int_Mh_2Ge^{-G}dv_g\nonumber\\
 &-\int_{\p B_{L\ep}(p)}G\frac{\p G}{\p n}dS_g+O((L\ep)^4).
\end{align}
Similar as Lemma 5.2 in \cite{LL05}, since $\int_0^{2\pi}\ell(r,\theta) d\theta=\int_0^{2\pi}\frac{\p \ell}{\p r}(r,\theta)d\theta=0$,
we have by Lemma \ref{lem-alpha1} that
\begin{align}\label{energy-31}
\int_{\p B_{L\ep}(p)}G\frac{\p G}{\p n}dS_g
=& 32\pi\log(L\ep)-4\pi\left(4\pi-\frac{\rho_2}{2}\right)(L\ep)^2+\pi(\lambda(p)^2+\nu(p)^2)(L\ep)^2\nonumber\\
 &-8\pi A(p)+2\pi\left(4\pi-\frac{\rho_2}{2}\right)A(p)(L\ep)^2\nonumber\\
 &-8\pi\left(4\pi-\frac{\rho_2}{2}\right)(L\ep)^2\log(L\ep)+O((L\ep)^4\log(L\ep)).
\end{align}
It is obvious that
\begin{align}\label{energy-32}
\int_{B_{L\ep}(p)}Gdv_g = -4&\pi(L\ep)^2\log(L\ep)+2\pi(L\ep)^2\nonumber\\
&+\pi A(p)(L\ep)^2+O((L\ep)^4\log(L\ep)).
\end{align}
Substituting \eqref{energy-31} and \eqref{energy-32} into \eqref{energy-3}, we have
\begin{align}\label{energy-30}
\int_{M\setminus B_{L\ep}(p)}|\nabla G|^2dv_g
=&-32\pi\log(L\ep)+8\pi A(p)-\rho_2\beta_2\int_Mh_2Ge^{-G}dv_g\nonumber\\
 &+4\pi(8\pi-\rho_2)(L\ep)^2-\pi(\lambda(p)^2+\nu(p)^2)(L\ep)^2\nonumber\\
 &+O((L\ep)^4\log(L\ep)).
\end{align}
By taking \eqref{energy-1}, \eqref{energy-2}, \eqref{energy-4} and \eqref{energy-30} into \eqref{energy}, one obtains
\begin{align}\label{energy-last}
\int_{M}|\nabla\phi_\ep|^2dv_g=&-32\pi\log(L\ep)+8\pi A(p)-\rho_2\beta_2\int_Mh_2Ge^{-G}dv_g\nonumber\\
&+16\pi\log(1+\pi L^2)-\frac{16\pi^2L^2}{1+\pi L^2}\nonumber\\
&+O((L\ep)^4\log(L\ep)).
\end{align}
By direct calculations, we have
\begin{align}\label{mean-last}
\int_{M}\phi_\ep=&4\log(L\ep)-A(p)-2\log(1+\pi L^2)\nonumber\\
                   &-2\ep^2\log(1+\pi L^2)+O((L\ep)^4\log(L\ep)).
\end{align}
Denote by $\mathcal{M}=\frac{1}{\pi}\left(-\frac{K(p)}{2}+\frac{(b_1(p)+\lambda(p))^2+(b_2(p)+\nu(p))^2}{4}\right)$. Suppose that in $B_\delta(p)$
\begin{align*}
h_1(x)-h_1(p)=&k_1r\cos\theta+k_2r\sin\theta\\
&+k_3r^2\cos^2\theta+2k_4r^2\cos\theta\sin\theta+k_5r^2\sin^2\theta+O(r^3).
\end{align*}
Following the same calculations as Section 4.2 in \cite{SZhu24+}, we have
\begin{align}\label{loghphi-1}
 &\log\int_Mh_1e^{\phi_1^\ep}\nonumber\\
=&\log h_1(p)+\log\ep^2\nonumber\\
&+\mathcal{M}\ep^2\log(1+\pi L^2)-\left(\mathcal{M}+\frac{4\pi-\frac{\rho_2}{2}}{2\pi}\right)\ep^2\log(L\ep)^2\nonumber\\
 &+\frac{1}{2\pi h_1(p)}[k_3+k_5+k_1(b_1+\lambda)+k_2(b_2+\nu)]\ep^2\log(1+\pi L^2)\nonumber\\
 &-\frac{1}{2\pi h_1(p)}[k_3+k_5+k_1(b_1+\lambda)+k_2(b_2+\nu)]\ep^2\log(L\ep)^2\nonumber\\
 &+O(\ep^2)+O\left(\frac{1}{L^4}\right).
\end{align}
It is obvious that
\begin{align}\label{loghphi-2}
\int_Mh_2e^{-\phi_\ep}dv_g =& \log\int_Mh_2e^{-G}dv_g-4\log(L\ep)\nonumber\\
&+2\log(1+\pi L^2)+A(p)+O((L\ep)^4\log(L\ep)).
\end{align}
Taking \eqref{energy-last}, \eqref{mean-last}, \eqref{loghphi-1} and \eqref{loghphi-2} into the functional $J_{8\pi,\rho_2}$, we obtain that
\begin{align*}
J_{8\pi,\rho_2}(\phi_\ep)
=&-8\pi-8\pi\log\pi-8\pi\log h_1(p)-4\pi A(p)\nonumber\\
 &-\frac{\rho_2}{2}\beta_2\int_{M}h_2Ge^{-G}dv_g-\rho_2\log\int_Mh_2e^{-G}dv_g\nonumber\\
 &-8\pi\left[\mathcal{M}+\frac{4\pi-\frac{\rho_2}{2}}{2\pi}+\frac{k_3+k_5+k_1(b_1+\lambda)+k_2(b_2+\nu)}{2\pi h_1(p)}\right]\nonumber\\
 &~~~~\times\ep^2[\log(1+\pi L^2)-\log(L\ep)^2]\nonumber\\
 &+O(\ep^2)+O\left(\frac{1}{L^4}\right)+O((L\ep)^4\log(L\ep))+O(\ep^3\log L).
\end{align*}
Note that under the assumption \eqref{cond-1}, we have
\begin{align*}
\mathcal{N}:=&\mathcal{M}+\frac{4\pi-\frac{\rho_2}{2}}{2\pi}+\frac{k_3+k_5+k_1(b_1+\lambda)+k_2(b_2+\nu)}{2\pi h_1(p)}\nonumber\\
=&-\frac{K(p)}{2\pi}+\frac{(b_1+\lambda)^2+(b_2+\nu)^2}{4\pi}+\frac{4\pi-\frac{\rho_2}{2}}{2\pi}+\frac{\frac{1}{2}\Delta h_1(p)+k_1(b_1+\lambda)+k_2(b_2+\nu)}{2\pi h_1(p)}\nonumber\\
=&\frac{1}{4\pi}\left[\Delta\log h_1(p)+(8\pi-\rho_2)-2K(p)\right]+\frac{1}{4\pi}\left[(b_1+\lambda+k_1)^2+(b_2+\nu+k_2)^2\right]\nonumber\\
>&0,
\end{align*}
where we have used $\Delta h_1(p)=\frac{1}{2}(k_3+k_5)$ and $\nabla h_1(p)=(k_1,k_2)$.

By choosing $L^4\ep^2=\frac{1}{\log(-\log\ep)}$, we have
\begin{align*}
 J_{8\pi,\rho_2}(\phi_\ep)
=&-8\pi-8\pi\log\pi-8\pi\log h_1(p)-4\pi A(p)\nonumber\\
 &-\frac{\rho_2}{2}\beta_2\int_{M}h_2Ge^{-G}dv_g-\rho_2\log\int_Mh_2e^{-G}dv_g\nonumber\\
 &-4\pi\mathcal{N}\ep^2(-\log\ep^2)+o(\ep^2(-\log\ep^2)).
\end{align*}
Since $\mathcal{N}>0$, we have for sufficiently small $\ep$ that
\begin{align*}
J_{8\pi,\rho_2}(\phi_\ep)
<&-8\pi-8\pi\log\pi-8\pi\log h_1(p)-4\pi A(p)\nonumber\\
 &-\frac{\rho_2}{2}\beta_2\int_{M}h_2Ge^{-G}dv_g-\rho_2\log\int_Mh_2e^{-G}dv_g,
\end{align*}
which contradicts \eqref{lower-bound} and shows $u_\ep$ does not blow up. Then $u_\ep$ converges to some $u\in\mathcal{E}$ which 
minimizes $J_{8\pi,\rho_2}$ in $\mathcal{E}$ and satisfies \eqref{eq-1}. This finishes the proof of Theorem \ref{thm-1}. $\hfill{\square}$

\section{Proof of Theorem \ref{thm-2}}
In this section, we shall prove Theorem \ref{thm-2}.
Firstly, we do some analysis on
the full critical case, i.e., $\rho_1=\rho_2=8\pi$. Secondly, if $u_\ep$ blows up, then we are in three cases: since two cases can be reduced to the partial critical case,
we just need to deal with the third case (both $u_\ep$ and $-u_\ep$ blow up). We derive the lower bound for $J_{8\pi,8\pi}$ by assuming
$u_\ep$ blows up. Lastly, we construct a blowup sequence $\phi_\ep$ such that $J_{8\pi,8\pi}(\phi_\ep)$ is smaller than the lower bound, which
contradicts with $u_\ep$ blows up. We remind the reader that we still use notations like $u_\ep$ and $\phi_\ep$, but they are not the same as in Sect. 2.

\subsection{Analysis for the full critical case}

In view of inequality \eqref{mt-ineq}, for any $\ep>0$ small, there exists a $u_\ep\in \mathcal{E}$ such that
\begin{align*}
J_{8\pi-\ep,8\pi-\ep}(u_\ep) = \inf_{u\in\mathcal{E}}J_{8\pi-\ep,8\pi-\ep}(u)
\end{align*}
and
\begin{align}\label{eq-uep-2}
 \begin{cases}
 -\Delta u_\ep = (8\pi-\ep)\left(\frac{h_1e^{u_\ep}}{\int_Mh_1e^{u_\ep}dv_g}-\frac{h_2e^{-u_\ep}}{\int_Mh_2e^{-u_\ep}dv_g}\right),\\
 \int_Mu_\ep dv_g=0.
 \end{cases}
\end{align}
We shall study analytic properties of $u_\ep$ in this subsection.

The following three lemmas can be obtained with the same arguments as in Section 2, we omit the proofs and just state them below.
\begin{lem}\label{lem-bd2}
There exist two positive constants $C_1$ and $C_2$ such that
\begin{align*}
C_1\leq \frac{\int_Me^{u_\ep}dv_g}{\int_{M}h_1e^{u_\ep}dv_g}, \frac{\int_Me^{-u_\ep}dv_g}{\int_{M}h_2e^{-u_\ep}dv_g}\leq C_2.
\end{align*}
\end{lem}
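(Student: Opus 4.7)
The plan is to repeat the proof of Lemma \ref{lem-bd} with $\rho_2$ replaced by $8\pi-\ep$ throughout. The structural symmetry between equations \eqref{eq-uep-1} and \eqref{eq-uep-2}, and between the functionals $J_{8\pi-\ep,\rho_2}$ and $J_{8\pi-\ep,8\pi-\ep}$, makes this adaptation essentially mechanical; no ingredient from Section 2 is lost in passing to the full critical case.

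For the lower bound I would keep the same choice
\[
C_1=\min\!\left\{\frac{1}{\max_M h_1},\frac{1}{\max_M h_2}\right\},
\]
which is positive because $h_1,h_2$ are positive somewhere; the pointwise estimates $\int_M h_i e^{\pm u_\ep}\,dv_g \leq \max_M h_i \int_M e^{\pm u_\ep}\,dv_g$ deliver the lower bounds on both ratios at once. For the upper bound I would fix any $w_0\in\mathcal{E}$ (the admissible set is nonempty by hypothesis) and use the minimizing property to obtain $J_{8\pi-\ep,8\pi-\ep}(u_\ep)\leq J_{8\pi-\ep,8\pi-\ep}(w_0)\leq C$ uniformly for small $\ep$. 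Rewriting $\tfrac{1}{16\pi}\int_M|\nabla u_\ep|^2\,dv_g$ through the definition of $J_{8\pi-\ep,8\pi-\ep}$ and plugging into the Moser-Trudinger inequality \eqref{mt-ineq} then produces
\[
\log\int_M e^{u_\ep}dv_g+\log\int_M e^{-u_\ep}dv_g \leq C+\frac{8\pi-\ep}{8\pi}\log\int_M h_1 e^{u_\ep}dv_g+\frac{8\pi-\ep}{8\pi}\log\int_M h_2 e^{-u_\ep}dv_g.
\]
Jensen's inequality gives $\log\int_M e^{\pm u_\ep}dv_g\geq 0$, so subtracting the right-hand log terms and combining with the already established lower bound on each ratio yields a uniform upper bound $C_2$.

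The only point that deserves attention is that the argument hinges on the coefficient $(8\pi-\ep)/(8\pi)$ being strictly less than $1$, which is exactly what allows the log ratios to be absorbed. This plays the same role that $\rho_2/(8\pi)<1$ played in Lemma \ref{lem-bd}, and since it continues to hold for every $\ep>0$, I do not foresee any genuine obstacle. The proof is, as the authors remark, obtained by the same arguments as in Section 2.
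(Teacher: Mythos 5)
Your proposal is correct and matches the paper's intended argument: the paper explicitly omits the proof, stating that Lemmas \ref{lem-bd2}--\ref{lem-char2} ``can be obtained with the same arguments as in Section 2,'' and you have carried out exactly that substitution of $\rho_2$ by $8\pi-\ep$ in the proof of Lemma \ref{lem-bd}. One small remark: the step you flag does not genuinely require the coefficient $(8\pi-\ep)/8\pi$ to be \emph{strictly} less than $1$ --- what is used is that it lies in $(0,1]$ (so the residual multiples of the nonnegative quantities $\log\int_M e^{\pm u_\ep}dv_g$ can be dropped) and is bounded away from $0$ as $\ep\to 0$, which is plainly the case.
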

\begin{lem}\label{lem-Ls2}
For any $s\in(1,2)$, $\|\nabla u_\ep\|_{s}\leq C$ for $i=1,2$.
\end{lem}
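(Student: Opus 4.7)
The plan is to mirror verbatim the duality argument used to prove Lemma \ref{lem-Ls}, with Lemma \ref{lem-bd2} replacing Lemma \ref{lem-bd} and equation \eqref{eq-uep-2} replacing \eqref{eq-uep-1}. The only structural change between the two settings is that the coefficient of the second bracket in the right-hand side of \eqref{eq-uep-2} is $8\pi-\ep$ instead of $\rho_2$, which plays no role in an $L^1$ bound on $\Delta u_\ep$.

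First I would record the preliminary consequence of Lemma \ref{lem-bd2}: since the denominators $\int_M h_1 e^{u_\ep}dv_g$ and $\int_M h_2 e^{-u_\ep}dv_g$ are bounded below by the $L^1$ norms of $e^{\pm u_\ep}$ up to constants, we have
\begin{align*}
\left\|(8\pi-\ep)\frac{h_1 e^{u_\ep}}{\int_M h_1 e^{u_\ep}dv_g} - (8\pi-\ep)\frac{h_2 e^{-u_\ep}}{\int_M h_2 e^{-u_\ep}dv_g}\right\|_1 \leq C,
\end{align*}
so by \eqref{eq-uep-2} one obtains $\|\Delta u_\ep\|_1 \leq C$.

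Next I would exploit the standard duality representation of Sobolev norms: for $s\in(1,2)$, setting $s'=s/(s-1)>2$,
\begin{align*}
\|\nabla u_\ep\|_s = \sup\left\{\left|\int_M \nabla u_\ep \cdot \nabla\phi\, dv_g\right| : \phi\in W^{1,s'}(M),\, \int_M\phi\, dv_g=0,\, \|\phi\|_{1,s'}=1\right\}.
\end{align*}
Since $s'>2$, the Sobolev embedding on the compact surface $(M,g)$ yields $\|\phi\|_{L^\infty(M)}\leq C$. Integrating by parts and invoking the $L^1$ bound on $\Delta u_\ep$ gives
\begin{align*}
\left|\int_M \nabla u_\ep\cdot\nabla\phi\, dv_g\right| = \left|\int_M \phi(-\Delta u_\ep)\, dv_g\right| \leq \|\phi\|_{L^\infty(M)} \|\Delta u_\ep\|_1 \leq C,
\end{align*}
from which the claim follows by taking the supremum over admissible $\phi$.

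There is no real obstacle: the argument is essentially the one already presented in Lemma \ref{lem-Ls}. The only point requiring (very mild) attention is to verify that the $L^1$ bound on $\Delta u_\ep$ passes through unchanged in the full critical case, which is immediate from Lemma \ref{lem-bd2} and the boundedness of $h_1,h_2$. For this reason the authors' decision to omit the proof and state the result is justified.
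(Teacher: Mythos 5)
Your proposal is correct and follows exactly the route the paper intends: the authors explicitly state that Lemma \ref{lem-Ls2} is obtained ``with the same arguments as in Section 2,'' i.e., by repeating the duality argument of Lemma \ref{lem-Ls} with Lemma \ref{lem-bd2} and equation \eqref{eq-uep-2} in place of Lemma \ref{lem-bd} and \eqref{eq-uep-1}. You also correctly write the conjugate exponent as $s'=s/(s-1)$, fixing the typo $s'=1/s$ that appears in the paper's proof of Lemma \ref{lem-Ls}.
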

\begin{lem}\label{lem-char2} The following three items are equivalent:

$(i)$ $m_1^{\ep}+m_2^{\ep}\to+\infty$ as $\ep\to0$;

$(ii)$ $\|\nabla u_\ep\|_2\to+\infty$ as $\ep\to0$;

$(iii)$ $\log\int_Mh_1e^{u_\ep}dv_g+\log\int_Mh_2e^{-u_\ep}dv_g\to+\infty$ as $\ep\to0$.

Here,
\begin{align*}
m_1^\ep=\max_M\left(u_\ep-\log\int_Mh_1e^{u_\ep}dv_g\right),~~m_2^\ep=\max_M\left(-u_\ep-\log\int_Mh_2e^{-u_\ep}dv_g\right).
\end{align*}
\end{lem}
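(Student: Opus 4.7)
The plan is to mirror the proof of Lemma \ref{lem-char} in Section 2, exploiting the symmetry between $u_\ep$ and $-u_\ep$ in the full critical case, and using Lemmas \ref{lem-bd2} and \ref{lem-Ls2} in place of Lemmas \ref{lem-bd} and \ref{lem-Ls}. I would establish the three implications $(i)\Rightarrow(ii)\Rightarrow(iii)\Rightarrow(i)$ in order.

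For $(i)\Rightarrow(ii)$, argue by contradiction: assume $\|\nabla u_\ep\|_2\le C$. Since $\overline{u_\ep}=0$, the standard Moser-Trudinger inequality yields $e^{\pm u_\ep}$ bounded in $L^s(M)$ for every $s\ge 1$. Jensen's inequality gives $\int_M e^{\pm u_\ep}dv_g\ge 1$, which combined with Lemma \ref{lem-bd2} shows that both $1/\int_M h_1 e^{u_\ep}dv_g$ and $1/\int_M h_2 e^{-u_\ep}dv_g$ are bounded. Plugging these into \eqref{eq-uep-2} gives $\|\Delta u_\ep\|_2\le C$; Poincaré and elliptic regularity then yield $\|u_\ep\|_{2,2}\le C$, and Sobolev embedding gives $\|u_\ep\|_{C^1(M)}\le C$. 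Together with the just-obtained bound on the denominators, this forces $m_1^\ep,m_2^\ep\le C$, contradicting $(i)$.

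For $(ii)\Rightarrow(iii)$, the key observation is that $J_{8\pi-\ep,8\pi-\ep}(u_\ep)$ is uniformly bounded from above: testing with $u\equiv 0\in\mathcal{E}$ and using the minimality of $u_\ep$ gives $J_{8\pi-\ep,8\pi-\ep}(u_\ep)\le -(8\pi-\ep)\log\int_M h_1 dv_g-(8\pi-\ep)\log\int_M h_2 dv_g\le C$. Rearranging the definition of the functional,
\begin{align*}
(8\pi-\ep)\left(\log\int_M h_1 e^{u_\ep}dv_g+\log\int_M h_2 e^{-u_\ep}dv_g\right)\ge \frac{1}{2}\|\nabla u_\ep\|_2^2-C,
\end{align*}
so $(ii)$ forces the left-hand side to tend to $+\infty$, which is $(iii)$.

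For $(iii)\Rightarrow(i)$, again argue by contradiction: if $m_1^\ep+m_2^\ep\le C$, the lower bound $m_i^\ep\ge\log C_1$ coming from Lemma \ref{lem-bd2} together with Jensen's inequality (exactly as in the proof of Lemma \ref{lem-char}) forces each $m_i^\ep$ to be bounded. Then the two normalized densities in \eqref{eq-uep-2} are bounded in $L^\infty(M)$, so $\|\Delta u_\ep\|_\infty\le C$. Combining this with Lemma \ref{lem-Ls2} and standard elliptic estimates, we get $\|u_\ep\|_{C^1(M)}\le C$, and the chain of inequalities
\begin{align*}
C\ge \max_M u_\ep+\max_M(-u_\ep)\ge 2\log C_1+\log\int_M h_1 e^{u_\ep}dv_g+\log\int_M h_2 e^{-u_\ep}dv_g
\end{align*}
contradicts $(iii)$. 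The only mildly nontrivial step is the uniform upper bound on $J_{8\pi-\ep,8\pi-\ep}(u_\ep)$ needed in $(ii)\Rightarrow(iii)$; everything else is a routine adaptation of the partial critical argument, with both sides of the equation now playing symmetric roles.
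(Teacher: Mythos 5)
Your proof mirrors the paper's intended argument: the paper explicitly states that Lemma \ref{lem-char2} ``can be obtained with the same arguments as in Section 2,'' i.e., by transferring the proof of Lemma \ref{lem-char} with Lemmas \ref{lem-bd2} and \ref{lem-Ls2} in place of Lemmas \ref{lem-bd} and \ref{lem-Ls}, which is exactly what you do. The three implications, the Moser--Trudinger and Jensen steps in $(i)\Rightarrow(ii)$, and the elliptic/Sobolev bootstrap in $(iii)\Rightarrow(i)$ are all as in the paper.

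One small but genuine flaw: in $(ii)\Rightarrow(iii)$ you deduce the uniform upper bound on $J_{8\pi-\ep,8\pi-\ep}(u_\ep)$ by ``testing with $u\equiv 0\in\mathcal{E}$.'' But the hypotheses of Theorem~\ref{thm-2} only assume $h_1,h_2$ are positive \emph{somewhere}; they may change sign, and nothing guarantees $\int_M h_i\,dv_g>0$, so $u\equiv 0$ need not lie in $\mathcal{E}$. The correct argument is to fix \emph{any} $w_0\in\mathcal{E}$ (nonempty by hypothesis) and observe that $J_{8\pi-\ep,8\pi-\ep}(u_\ep)\le J_{8\pi-\ep,8\pi-\ep}(w_0)$, which converges to $J_{8\pi,8\pi}(w_0)$ as $\ep\to0$ and is hence uniformly bounded. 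With that replacement your argument is complete; the rest is a faithful adaptation of the partial-critical proof (in fact $(ii)\Rightarrow(iii)$ is slightly cleaner here since $\rho_1=\rho_2$, so one does not even need the lower bounds from Lemma \ref{lem-bd2} to pass from the weighted sum to the unweighted one).
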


\begin{definition}[Blow up]\label{def2} If one of the three items in Lemma \ref{lem-char2} holds, we call $u_\ep$ blows up.
\end{definition}

For $(u_\ep)$, there are two possibilities: One is $\|\nabla u_\ep\|_2\leq C$, then since $\overline{u_\ep}=0$ we have by
Poincar\'{e}'s inequality that $\|u_\ep\|_{1,2}\leq C$, one can show $u_\ep\rightharpoonup u_0\in \mathcal{E}$ weakly in $H^1(M)$ as $\ep\to0$
and $u_0$ minimizes $J_{8\pi,8\pi}$ in $\mathcal{E}$, then the proof of Theorem \ref{thm-2} can terminate. The other is
$\|\nabla u_\ep\|_2\to+\infty$ as $\ep\to0$. By Lemma \ref{lem-Ls2} and Definition \ref{def2}, $u_\ep$ blows up. In the rest of the proof of
Theorem \ref{thm-2}, we always assume $u_\ep$ blows up.

By Lemma \ref{lem-char2} (iii), we divide the whole proof into three cases:

\textbf{Case 1} $\log\int_Mh_1e^{u_\ep}dv_g\to+\infty$, $\log\int_Mh_2e^{-u_\ep}dv_g\leq C$ as $\ep\to0$;

\textbf{Case 2} $\log\int_Mh_1e^{u_\ep}dv_g\leq C$, $\log\int_Mh_2e^{-u_\ep}dv_g\to+\infty$ as $\ep\to0$;

\textbf{Case 3} $\log\int_Mh_1e^{u_\ep}dv_g\to+\infty$, $\log\int_Mh_2e^{-u_\ep}dv_g\to+\infty$ as $\ep\to0$.

Suppose we are in Case 1, by checking the proof of Theorem \ref{thm-1} carefully, we find that $\rho_2<8\pi$ is used to show
$\log\int_Mh_2e^{-u_\ep}dv_g\leq C$ which happens to be the situation in Case 1. At any other place $\rho_2<8\pi$
can be replaced by $\rho_2=8\pi$. By Theorem \ref{thm-1}, if
$$\Delta \log h_1(x)-2K(x)>0~~\text{for~any}~x\in M_1^+,$$
where $M_1^+=\{x\in M:~h_1(x)>0\}$, $J_{8\pi,8\pi}$ has a minimizer $u\in\mathcal{E}$ which satisfies \eqref{eq-2}.

Suppose we are in Case 2, similar as Case 1, if
$$\Delta \log h_2(x)-2K(x)>0~~\text{for~any}~x\in M_2^+,$$
where $M_2^+=\{x\in M:~h_2(x)>0\}$, then $J_{8\pi,8\pi}$ has a minimizer $u\in\mathcal{E}$ which satisfies \eqref{eq-2}.

If we are in Case 3, by Lemma \ref{lem-Ls2}, there exists $G$ such that $u_\ep\rightharpoonup G$ weakly in $W^{1,s}(M)$
as $\ep\to0$ for any $s\in(1,2)$.
In view of Lemma \ref{lem-bd2}, there exist two nonnegative bounded measures $\mu_1$ and $\mu_2$ such that
\begin{align*}
e^{u_\ep-\log\int_Mh_1e^{u_\ep}dv_g}dv_g\to\mu_1~~\text{and}~~e^{-u_\ep-\log\int_Mh_2e^{-u_\ep}dv_g}dv_g\to\mu_2
\end{align*}
as $\ep\to0$ in the sense of measures on $M$. We denote by
\begin{align*}
\gamma = 8\pi h_1\mu_1 - 8\pi h_2\mu_2
\end{align*}
and
\begin{align*}
S=\{x\in M:~|\gamma(\{x\})|\geq 4\pi\}.
\end{align*}
Using similar arguments as Lemma 2.8 in \cite{DJLW97}, we have
\begin{align}\label{uep-out2}
\|u_\ep\|_{L_{\text{loc}}^{\infty}(M\setminus S)}\leq C.
\end{align}

Since $u_\ep$ blows up, $S$ is not empty. Or else, following a finite covering argument and \eqref{uep-out2}, we have
$\|u_\ep\|_{L^\infty(M)} \leq C$, which contradicts Case 3. By the definition of $S$, for any $x \in S$, there holds
$$\mu_1(\{x\}) \geq \frac{1}{4\max_M |h_1|} \quad \text{or} \quad \mu_2(\{x\}) \geq \frac{1}{4\max_M |h_2|}.$$
In view of $\mu_1$ and $\mu_2$ are bounded, $S$ is a finite set. We denote by $S=\{x_l\}_{l=1}^L$.
It follows from
\eqref{uep-out2} and Fatou's lemma that
\begin{align}\label{mu_i=}
\mu_i=\sum_{l=1}^L\mu_i(\{x_l\})\delta_{x_l},~~i=1,2,
\end{align}
where $\delta_{x}$ is the Dirac distribution.

By Lemma \ref{lem-bd2} and \eqref{mu_i=} one knows that ${\rm{supp}}\mu_i\neq\emptyset$, $i=1,2$. If there are at least two points in each ${\rm{supp}}\mu_i$, then by the improved Moser-Trudinger inequality (cf. \cite[Proposition 2.6]{J13}), for any $\ep'>0$, there exists some $C=C(\ep')>0$ such that
\begin{align*}
\log\int_Me^{u_\ep}+\log\int_Me^{-u_\ep}&\leq\left(\frac{1}{32\pi}+\ep'\right)\int_M |\nabla u_\ep|^2dv_g+C.
\end{align*}
By choosing $\ep'=\frac{1}{96\pi}$, we have
\begin{align*}
C\geq& J_{8\pi-\ep,8\pi-\ep}(u_\ep)\nonumber\\
 =&\frac{1}{2}\int_M|\nabla u_\ep|^2dv_g-(8\pi-\ep)\log\int_Mh_1e^{u_\ep}dv_g-(8\pi-\ep)\log\int_Mh_2e^{-u_\ep}dv_g\nonumber\\
 \geq&\frac{1}{6}\int_M|\nabla u_\ep|^2dv_g-(8\pi-\ep)(\log\max_Mh_1+\log\max_Mh_2)-C,
\end{align*}
which shows $\|\nabla u_\ep\|_2$ is bounded and contradicts with $u_\ep$ blows up (Lemma \ref{lem-char2} (ii)).
Hence, either ${\rm{supp}}\mu_1$ or ${\rm{supp}}\mu_2$ has only one point. Without loss of generality, we assume that ${\rm{supp}}\mu_1$ has only one point and ${\rm{supp}}\mu_1=\{x_1\}$ since ${\rm{supp}}\mu_1\subset S$. Recalling the definition of $\mu_1$, we have
\begin{align}\label{h_1mu_1-2}
h_1\mu_1=\delta_{x_1}.
\end{align}

The following result which is based on Pohozaev identities is very important in the understanding of blowup set.
\begin{lem}\label{lem-poho}
Denote by $h_i\mu_i=\sigma_i$ for $i=1,2$, we have
\begin{align}\label{pohozaev8}
\left[\sigma_1(\{x_l\})-\sigma_2(\{x_l\})\right]^2=\sigma_1(\{x_l\})+\sigma_2(\{x_l\}),
\end{align}
where $l=1,2,\cdots,L$.
\end{lem}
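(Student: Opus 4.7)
My plan is to derive a local Pohozaev-type identity around each blow-up point $x_l\in S$, pass to the limit $\ep\to 0$, and then shrink the radius. Pick an isothermal coordinate system $(\Omega;x^1,x^2)$ centered at $x_l$ with $g|_\Omega = e^\phi((dx^1)^2+(dx^2)^2)$, and choose $r>0$ so small that $\overline{B_r(x_l)}\cap S=\{x_l\}$. Setting $y=x-x_l$ and
\begin{equation*}
V_{1,\ep} := \frac{(8\pi-\ep)e^{\phi}h_1}{\int_M h_1 e^{u_\ep}\,dv_g},\qquad V_{2,\ep} := \frac{(8\pi-\ep)e^{\phi}h_2}{\int_M h_2 e^{-u_\ep}\,dv_g},
\end{equation*}
equation \eqref{eq-uep-2} reads $-\Delta_{\mathbb R^2}u_\ep = V_{1,\ep}e^{u_\ep}-V_{2,\ep}e^{-u_\ep}$ in these coordinates. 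Multiplying by $y\cdot\nabla u_\ep$, integrating over $B_r(x_l)$, and applying the divergence theorem yields the standard Pohozaev identity
\begin{align*}
\tfrac{1}{2}\int_{\partial B_r}(y\cdot\nu)|\nabla u_\ep|^2 dS - \int_{\partial B_r}(y\cdot\nabla u_\ep)(\partial_\nu u_\ep)\,dS
=&\int_{\partial B_r}(V_{1,\ep}e^{u_\ep}+V_{2,\ep}e^{-u_\ep})(y\cdot\nu)\,dS\\
&-\,2\int_{B_r}(V_{1,\ep}e^{u_\ep}+V_{2,\ep}e^{-u_\ep})\,dx\\
&-\int_{B_r}(y\cdot\nabla V_{1,\ep})e^{u_\ep}dx-\int_{B_r}(y\cdot\nabla V_{2,\ep})e^{-u_\ep}dx.
\end{align*}

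I would then send $\ep\to 0$ with $r$ fixed. Smooth convergence $u_\ep\to G$ in $C^\infty_{\text{loc}}(M\setminus S)$ (obtained by the same Brezis--Merle type argument used in Section~2) handles the two boundary integrals on the left and also shows that the first boundary integral on the right tends to $0$, because on $\partial B_r$ the exponentials $e^{\pm u_\ep}$ stay bounded while $\int_M h_i e^{\pm u_\ep}dv_g\to+\infty$ in Case~3. For the interior integrals, the weak-measure convergence $V_{i,\ep}e^{\pm u_\ep}dx\to 8\pi\sigma_i$ combined with the atomic structure \eqref{mu_i=} gives $\int_{B_r}V_{i,\ep}e^{\pm u_\ep}dx\to 8\pi\sigma_i(\{x_l\})$. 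The gradient volume terms are handled via the factorization $\nabla V_{i,\ep}=V_{i,\ep}\nabla\log(e^\phi h_i)$, whence
\begin{equation*}
\int_{B_r}(y\cdot\nabla V_{i,\ep})e^{\pm u_\ep}dx=\int_{B_r}[y\cdot\nabla\log(e^\phi h_i)]\,V_{i,\ep}e^{\pm u_\ep}dx\longrightarrow 8\pi[y\cdot\nabla\log(e^\phi h_i)](x_l)\,\sigma_i(\{x_l\})=0,
\end{equation*}
the last equality because $y(x_l)=0$ (and if $h_i(x_l)=0$ then $\sigma_i(\{x_l\})=h_i(x_l)\mu_i(\{x_l\})$ vanishes automatically).

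Finally I would send $r\to 0$ in the resulting identity for $G$. The distributional limit of \eqref{eq-uep-2} is $-\Delta_g G=8\pi(\sigma_1-\sigma_2)$ on $M$; combined with \eqref{mu_i=} this yields the local expansion
\begin{equation*}
G(x)=-4\alpha_l\log|x-x_l|+\tilde G(x),\qquad \alpha_l:=\sigma_1(\{x_l\})-\sigma_2(\{x_l\}),
\end{equation*}
with $\tilde G$ smooth near $x_l$ by local elliptic regularity (the source has no singular part at $x_l$ beyond the Dirac already extracted). A direct polar-coordinate computation then gives $\tfrac{1}{2}\int_{\partial B_r}(y\cdot\nu)|\nabla G|^2 dS\to 16\pi\alpha_l^2$ and $\int_{\partial B_r}(y\cdot\nabla G)\partial_\nu G\, dS\to 32\pi\alpha_l^2$, so the limiting left-hand side tends to $-16\pi\alpha_l^2$ while the limiting right-hand side tends to $-16\pi[\sigma_1(\{x_l\})+\sigma_2(\{x_l\})]$. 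Equating yields $[\sigma_1(\{x_l\})-\sigma_2(\{x_l\})]^2=\sigma_1(\{x_l\})+\sigma_2(\{x_l\})$, which is \eqref{pohozaev8}. The main obstacle is the passage to the limit in the gradient volume terms: a priori $\nabla V_{i,\ep}$ is only bounded, not small, so one cannot estimate these integrals as negligible by a direct pointwise bound; the factorization through $V_{i,\ep}$ is what makes the argument go through, converting the integrand into a fixed continuous function (vanishing at $x_l$) times a quantity with known weak-measure limit.
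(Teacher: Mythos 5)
Your argument follows the paper's proof essentially verbatim: write the equation in isothermal coordinates around each $x_l$, multiply by $y\cdot\nabla u_\ep$, integrate over $B_r(x_l)$, pass to the limit $\ep\to 0$ using the $C^2_{\mathrm{loc}}(M\setminus S)$ convergence of $u_\ep$ and the weak-measure convergence of the nonlinear terms, and then shrink $r$ using the local logarithmic expansion of $G$; your limiting boundary computation and final identity reproduce \eqref{pohozaev7}.

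The one step that needs a small repair is the factorization $\nabla V_{i,\ep}=V_{i,\ep}\nabla\log(e^\phi h_i)$. Since the paper explicitly allows $h_i$ to vanish or change sign, $\nabla\log(e^\phi h_i)$ need not be defined or bounded on $B_r(x_l)$, so the claim that the integrand becomes ``a fixed continuous function times a weakly convergent measure'' does not hold in general; your parenthetical remark covers only the point value $h_i(x_l)=0$, not the ill-definedness of $\log h_i$ near its zero set. The cleaner route, which is what \eqref{pohozaev4} encodes, is to avoid logarithms altogether: $\nabla V_{i,\ep}=(8\pi-\ep)\nabla(e^\phi h_i)\big/\int_M h_i e^{\pm u_\ep}dv_g$ is smooth regardless of zeros of $h_i$, so
\begin{equation*}
\int_{B_r}(y\cdot\nabla V_{i,\ep})\,e^{\pm u_\ep}\,dx
=(8\pi-\ep)\int_{B_r}\bigl[y\cdot\nabla(e^\phi h_i)\bigr]\,\frac{e^{\pm u_\ep}}{\int_M h_i e^{\pm u_\ep}dv_g}\,dx,
\end{equation*}
and now the measure $\frac{e^{\pm u_\ep}}{\int_M h_i e^{\pm u_\ep}dv_g}\,dx$ is nonnegative, converges weakly to $e^{-\phi}\mu_i$, and (by the atomic structure \eqref{mu_i=}) is concentrated at $x_l$ on $B_r$; the test function $y\cdot\nabla(e^\phi h_i)$ is smooth and vanishes at $y=0$, so the integral tends to $0$ as $\ep\to 0$. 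With this substitution your proof matches the paper's.
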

\begin{proof}
It follows from \eqref{uep-out2}, \eqref{mu_i=} and \eqref{h_1mu_1-2} that $G$ satisfies
\begin{align}\label{eq-green-2}
\begin{cases}
-\Delta G = 8\pi(\delta_{x_1}-h_2\sum_{l=1}^L\mu_2(\{x_l\})\delta_{x_l}),\\
\int_MGdv_g=0.
\end{cases}
\end{align}
By the standard elliptic estimates, we have
\begin{align}\label{converge-out}
u_\ep\to G~~\text{in}~~C^2_{\text{loc}}(M\setminus S)
\end{align}
as $\ep\to0$. Inspired by the proof of Estimation B in \cite{CLin},
We let $(B_{\delta}(x_l);(x^1,x^2))$ be a coordinate system around $x_l$ and we assume the metric to be
$$g|_\Omega = e^{\phi}\left((dx^1)^2+(dx^2)^2\right)$$
with $\phi(0)=0$ and $\nabla_{\mathbb{R}^2}\phi(0)=0$. Here $\nabla_{\mathbb{R}^2}=(\frac{\p}{\partial x^1},\frac{\p}{\partial x^2})$.
It is well known that
\begin{align}\label{pohozaev0}
G = -\frac{\gamma(\{x_l\})}{2\pi}\log r + \psi,
\end{align}
where $r=\sqrt{(x^1)^2+(x^2)^2}$ and $\psi$ is a smooth function near $x_l$. In this coordinate system, \eqref{eq-uep-2} reduces to
\begin{align}\label{pohozaev1}
-\Delta_{\mathbb{R}^2}u_\ep = (8\pi-\ep)e^{\phi}\left(\frac{h_1e^{u_\ep}}{\int_Mh_1e^{u_\ep}dv_g}-\frac{h_2e^{-u_\ep}}{\int_Mh_2e^{-u_\ep}dv_g}\right)
\end{align}
for $|x|\leq\delta$, where $\Delta_{\mathbb{R}^2}=\frac{\p^2}{\p(x^1)^2}+\frac{\p^2}{\p(x^2)^2}$ is the Laplacian in $\mathbb{R}^2$.
By \eqref{pohozaev1} we know $u_\ep$ satisfies
\begin{align}\label{pohozaev2}
-\Delta_{\mathbb{R}^2}u_\ep = (8\pi-\ep)\hat{h}_1e^{u_\ep}-(8\pi-\ep)\hat{h}_2e^{-u_\ep}
\end{align}
for $|x|\leq\delta$, where
\begin{align}\label{pohozaev3}
\hat{h}_1(x) = e^{\phi(x)}\frac{h_1(x)}{\int_Mh_1e^{u_\ep}dv_g},~\hat{h}_2(x) = e^{\phi(x)}\frac{h_2(x)}{\int_Mh_2e^{-u_\ep}dv_g}.
\end{align}
It follows from the choice of $\phi(x)$ and \eqref{pohozaev3} that
\begin{align}\label{pohozaev4}
\hat{h}_1(0) = \frac{h_1(x_l)}{\int_Mh_1e^{u_\ep}dv_g}~~\text{and}~~\nabla_{\mathbb{R}^2} \hat{h}_1(0) = \frac{\nabla h_1(x_l)}{\int_Mh_1e^{u_\ep}dv_g},\nonumber\\
\hat{h}_2(0) =\frac{h_2(x_l)}{\int_Mh_2e^{-u_\ep}dv_g}~~\text{and}~~\nabla_{\mathbb{R}^2} \hat{h}_2(0) = \frac{\nabla h_2(x_l)}{\int_Mh_2e^{-u_\ep}dv_g}.
\end{align}
Multiplying \eqref{pohozaev2} with $x\cdot\nabla_{\mathbb{R}^2}u_\ep$ and integrating on $\mathbb{B}_{\delta}(0)$,
we have the following Pohozaev identity
\begin{align}\label{pohozaev5}
&-\delta\int_{\p\mathbb{B}_{\delta}(0)}\left(\left(\frac{\p u_\ep}{\p r}\right)^2-\frac{1}{2}|\nabla_{\mathbb{R}^2}u_\ep|^2\right)ds\nonumber\\
=& (8\pi-\ep)\delta\int_{\p\mathbb{B}_{\delta}(0)}\hat{h}_1e^{u_\ep}ds
  - (8\pi-\ep)\int_{\mathbb{B}_{\delta}(0)}(2\hat{h}_1e^{u_\ep}+x\cdot\nabla_{\mathbb{R}^2}\hat{h}_1e^{u_\ep})dx\nonumber\\
  &+ (8\pi-\ep)\delta\int_{\p\mathbb{B}_{\delta}(0)}\hat{h}_2e^{-u_\ep}ds
  - (8\pi-\ep)\int_{\mathbb{B}_{\delta}(0)}(2\hat{h}_2e^{-u_\ep}+x\cdot\nabla_{\mathbb{R}^2}\hat{h}_2e^{-u_\ep})dx.
\end{align}
Letting $\ep\to0$ first and then $\delta\to0$ in \eqref{pohozaev5}, by using \eqref{converge-out}, \eqref{pohozaev0}, \eqref{pohozaev3} and \eqref{pohozaev4} we conclude
\begin{align}\label{pohozaev7}
-\pi\left(-\frac{\gamma(\{x_l\})}{2\pi}\right)^2=-16\pi\left[h_1(x_l)\mu_1(\{x_l\})+h_2(x_l)\mu_2(\{x_l\})\right].
\end{align}
Recalling that $h_i\mu_i=\sigma_i$ for $i=1,2$, then \eqref{pohozaev7} reduces to \eqref{pohozaev8}, this ends the proof.
\end{proof}

We know from \eqref{h_1mu_1-2} that $\sigma_1(\{x_1\})=1$ and $\sigma_1(\{x_l\})=0$ for any $l\geq2$, taking this fact into \eqref{pohozaev8} we obtain that
\begin{align*}
&\sigma_2(\{x_1\})=0~~\text{or}~~\sigma_2(\{x_1\})=3;\\
&\sigma_2(\{x_l\})=0~~\text{or}~~\sigma_2(\{x_l\})=1,~~\forall l\geq2.
\end{align*}
This together with $\sigma_2(M)=1$ and \eqref{mu_i=} yields
\begin{align*}
\sigma_2(\{x_m\})=1~~\text{for~some}~m\geq2~~\text{and}~\sigma_2(\{x_l\})=0~~\forall l\in\{1,\cdots,L\}\setminus\{m\}.
\end{align*}
Without loss of generality, we assume $m=2$. Then we have
\begin{align}\label{h_2mu_2}
h_2\mu_2=\sigma_2=\delta_{x_2}.
\end{align}
We would like to collect \eqref{h_1mu_1-2} and \eqref{h_2mu_2} as the following lemma.
\begin{lem}\label{mu_1mu_2}
If we are in Case 3, that is, $\log\int_Mh_1e^{u_\ep}dv_g\to+\infty$ and $\log\int_Mh_2e^{-u_\ep}dv_g\to+\infty$ as $\ep\to0$.
Then $h_1\mu_1=\delta_{x_1}$ and $h_2\mu_2=\delta_{x_2}$ with $x_1\neq x_2$.
\end{lem}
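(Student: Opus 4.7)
The plan is to combine three ingredients already developed in the preceding text: the improved two-singularity Moser-Trudinger inequality to force each blow-up mass to concentrate, the Pohozaev identity of Lemma \ref{lem-poho} to quantize the blow-up masses, and the total mass constraints coming from the normalization of $\mu_1$ and $\mu_2$.

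First I would record that in Case 3 both sequences $\log \int_M h_1 e^{u_\ep}\,dv_g$ and $\log \int_M h_2 e^{-u_\ep}\,dv_g$ diverge, so neither $\text{supp}\,\mu_1$ nor $\text{supp}\,\mu_2$ is empty (if either were empty, the corresponding total mass $\mu_i(M)$ would fail to equal one, contradicting Lemma \ref{lem-bd2}). Next, the dichotomy argument already sketched shows that $\text{supp}\,\mu_1$ and $\text{supp}\,\mu_2$ cannot both contain two or more points: if they did, the two-point improved Moser-Trudinger inequality of \cite[Prop. 2.6]{J13} would yield $\log\int_M e^{u_\ep}\,dv_g + \log\int_M e^{-u_\ep}\,dv_g \leq (\tfrac{1}{32\pi}+\ep')\|\nabla u_\ep\|_2^2+C$, which combined with the uniform bound on $J_{8\pi-\ep,8\pi-\ep}(u_\ep)$ and Lemma \ref{lem-bd2} would give $\|\nabla u_\ep\|_2 \leq C$, contradicting Lemma \ref{lem-char2}(ii). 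After relabeling, I may assume $\text{supp}\,\mu_1=\{x_1\}$, and then using $\mu_1(M)=1$ together with $h_1\mu_1=\sigma_1$ I obtain $h_1\mu_1=\delta_{x_1}$, which is \eqref{h_1mu_1-2}.

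The second stage is to pin down $\mu_2$ via the Pohozaev identity \eqref{pohozaev8} applied at every $x_l\in S$. At $x_1$, plugging $\sigma_1(\{x_1\})=1$ into $(\sigma_1-\sigma_2)^2=\sigma_1+\sigma_2$ yields $\sigma_2(\{x_1\})\in\{0,3\}$; for every other $x_l\in S$, the vanishing $\sigma_1(\{x_l\})=0$ forces $\sigma_2(\{x_l\})\in\{0,1\}$. Now I impose the total-mass identity $\sum_{l}\sigma_2(\{x_l\})=\mu_2(M)=1$ (recall $h_2\mu_2=\sigma_2$ and $\mu_2(M)$ is the limit of normalized integrals, which equals one by Lemma \ref{lem-bd2}). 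The value $\sigma_2(\{x_1\})=3$ is incompatible with total mass one, so $\sigma_2(\{x_1\})=0$, and exactly one of the remaining $x_l$ (say $x_2$, after relabeling) carries $\sigma_2(\{x_2\})=1$, with the rest vanishing. Thus $h_2\mu_2=\delta_{x_2}$, which is \eqref{h_2mu_2}, and since $\sigma_2(\{x_1\})=0$ while $\sigma_2(\{x_2\})=1$ we automatically have $x_1\neq x_2$.

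I expect the only genuine subtlety to be ensuring that the Pohozaev identity \eqref{pohozaev8} is applicable at $x_1$ regardless of the sign or vanishing of $h_2(x_1)$; this is already handled in Lemma \ref{lem-poho}, where the identity is derived purely from the limiting behaviour of $u_\ep$ on a small circle. Everything else is bookkeeping: the argument is really just a clean packaging of the dichotomy, quantization, and mass-balance that the preceding subsection has already assembled.
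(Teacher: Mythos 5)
Your argument follows the paper's proof essentially step for step: nonempty supports from Lemma~\ref{lem-bd2}, the improved Moser--Trudinger dichotomy to force one support to be a singleton, the Pohozaev quantization from Lemma~\ref{lem-poho}, and the mass balance to rule out $\sigma_2(\{x_1\})=3$ and single out $x_2\neq x_1$. One small slip in bookkeeping: you repeatedly write $\mu_2(M)=1$ and attribute it to Lemma~\ref{lem-bd2}, but $\mu_2(M)=\lim_{\ep\to 0}\frac{\int_M e^{-u_\ep}\,dv_g}{\int_M h_2 e^{-u_\ep}\,dv_g}$ is only bounded between $C_1$ and $C_2$, not equal to $1$; likewise $\sum_l\sigma_2(\{x_l\})=\sigma_2(M)\neq\mu_2(M)$ unless $h_2\equiv 1$ on $S$. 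The correct mass constraint is $\sigma_2(M)=\int_M h_2\,d\mu_2=1$, which comes directly from weak convergence of $\frac{h_2 e^{-u_\ep}\,dv_g}{\int_M h_2 e^{-u_\ep}\,dv_g}$ (a probability measure) against the continuous function $h_2$, not from Lemma~\ref{lem-bd2}. Since the quantity you actually use in the pigeonhole step is $\sigma_2(M)=1$, the argument goes through once the labels are corrected.
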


\begin{rem} From the definition of $S$, we know $S=\{x_1,x_2\}$.
\end{rem}

To do blow-up analysis near $x_1$ (resp. $x_2$), one still needs the upper bound of $-u^\ep$ (resp. $u_\ep$) near $x_1$ (resp. $x_2$).
In fact, we have
\begin{lem}\label{upperbound}
There exists  a positive number $r$ which is less than $\text{dist}(x_1,x_2)/2$ and makes $h_i>0$ in $B_r(x_i)$ for $i=1,2$, such that
\begin{align*}
\sup_{B_{r/4}(x_1)}\left(-u_\ep\right) \leq C~~\text{and}~~\sup_{B_{r/4}(x_2)} u_\ep\leq C.
\end{align*}
\end{lem}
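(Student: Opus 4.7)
The plan is to mirror the proof of Lemma~\ref{lem--uep} by using the separation $x_1\neq x_2$ furnished by Lemma~\ref{mu_1mu_2}. The two bounds are symmetric under $u_\ep\mapsto-u_\ep$ together with swapping $(h_1,x_1)\leftrightarrow(h_2,x_2)$, so I will focus on establishing $\sup_{B_{r/4}(x_1)}(-u_\ep)\leq C$.

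First I would fix $r$ with $2r<\dist(x_1,x_2)$ and small enough that $h_1>0$ on $\overline{B_r(x_1)}$ and $h_2>0$ on $\overline{B_r(x_2)}$; this is possible because Lemma~\ref{mu_1mu_2} forces $h_1(x_1),h_2(x_2)>0$. Since $\partial B_{r/2}(x_1)\subset M\setminus S$, estimate \eqref{uep-out2} makes $-u_\ep|_{\partial B_{r/2}(x_1)}$ uniformly bounded. Next, on $B_r(x_1)$ the positivity $h_1>0$ lets me discard the nonpositive term in the equation for $-u_\ep$, yielding the subsolution inequality
\begin{align*}
-\Delta(-u_\ep)\leq(8\pi-\ep)\frac{h_2e^{-u_\ep}}{\int_Mh_2e^{-u_\ep}dv_g}=:f_\ep,
\end{align*}
and the measure convergence $h_2\mu_2=\delta_{x_2}$ together with $x_2\notin\overline{B_r(x_1)}$ yields the crucial small-mass decay $\|f_\ep\|_{L^1(B_r(x_1))}\to0$ as $\ep\to0$.

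The heart of the argument is a Brezis-Merle bootstrap. Let $v_\ep$ solve $-\Delta v_\ep=f_\ep$ on $B_{r/2}(x_1)$ with $v_\ep=-u_\ep$ on the boundary; the maximum principle then gives $-u_\ep\leq v_\ep$. Writing $v_\ep$ as the sum of the harmonic extension of its (uniformly bounded) boundary values and an $H^1_0$ part with source $f_\ep$, Brezis-Merle's inequality applied to the latter yields $e^{\alpha|v_\ep|}\in L^1(B_{r/2}(x_1))$ for arbitrarily large $\alpha$ once $\ep$ is small. Consequently $e^{-u_\ep}\leq e^{v_\ep}$ is bounded in $L^p(B_{r/2}(x_1))$ for every $p<\infty$, which upgrades $f_\ep$ to $L^p$; standard interior elliptic estimates (for example Theorem 8.17 of \cite{GT} combined with the Sobolev embedding $W^{2,p}\hookrightarrow L^\infty$ in dimension two) then deliver $v_\ep\leq C$ on $B_{r/4}(x_1)$, and hence $-u_\ep\leq C$ there. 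The second bound follows by the symmetric argument with the roles of $u_\ep$, $h_1$, $x_1$ swapped with those of $-u_\ep$, $h_2$, $x_2$.

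The main obstacle is activating the bootstrap: one needs $\|f_\ep\|_{L^1(B_r(x_1))}$ to drop below the Brezis-Merle threshold as $\ep\to0$. This is exactly where Lemma~\ref{mu_1mu_2} is indispensable, since its identification of $h_2\mu_2$ as a single Dirac mass at $x_2\neq x_1$ prevents concentration of $e^{-u_\ep}$ at $x_1$; in any degenerate scenario with $x_1=x_2$ this mechanism would collapse and the present lemma would be false.
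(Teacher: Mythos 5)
Your proof is correct and rests on the same mechanism as the paper's: the Brezis--Merle small-mass criterion, activated by Lemma~\ref{mu_1mu_2} ($h_2\mu_2=\delta_{x_2}$ with $x_2\neq x_1$), which drives $\|f_\ep\|_{L^1(B_r(x_1))}\to0$ and hence gives high integrability of $e^{-u_\ep}$ near $x_1$. The two arguments differ only in how they decompose $-u_\ep$. The paper writes $-u_\ep=v^1_\ep+v^2_\ep$ directly, where $v^1_\ep$ is the zero-boundary Dirichlet potential of $f_\ep$ (handled by Brezis--Merle) and $v^2_\ep$ is subharmonic on $B_r(x_1)$ (since $h_1>0$ there); it then bounds $v^2_\ep$ interiorly via Theorem~8.17 of \cite{GT}, controlling $\|(v^2_\ep)^+\|_{L^s}$ through the global $L^s$ gradient bound of Lemma~\ref{lem-Ls2} and the Brezis--Merle estimate for $v^1_\ep$. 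You instead first dominate $-u_\ep$ by a supersolution $v_\ep$ via the comparison principle and then split $v_\ep$ into the harmonic extension of its (uniformly bounded, by \eqref{uep-out2}) boundary values plus the same zero-boundary Dirichlet potential, bounding the harmonic piece by the maximum principle. Your variant makes explicit use of the annulus estimate \eqref{uep-out2} where the paper uses Lemma~\ref{lem-Ls2}; both are available, and both ultimately close the bootstrap the same way: small $L^1$ mass, Brezis--Merle exponential integrability, upgrade $f_\ep$ to $L^p$ (using that $\int_Mh_2e^{-u_\ep}dv_g$ is bounded below by Lemma~\ref{lem-bd2}), and conclude by elliptic $W^{2,p}\hookrightarrow L^\infty$ estimates. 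One stylistic note: Theorem~8.17 of \cite{GT} is already an $L^\infty$ estimate for subsolutions, so invoking it ``combined with $W^{2,p}\hookrightarrow L^\infty$'' is redundant; the natural reference for the last step is the Calder\'on--Zygmund $W^{2,p}$ estimate with Sobolev embedding.
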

\begin{proof} For $i=1$, we consider the solution of
\begin{align*}
\begin{cases}
-\Delta v^{1}_\ep = (8\pi-\ep)\frac{h_2e^{-u_\ep}}{\int_Mh_2e^{-u_\ep}dv_g} &\text{in}~B_r(x_1),\\
v^{1}_\ep=0 &\text{on}~\p B_r(x_1).
\end{cases}
\end{align*}
Denote by $v^2_\ep=-u_\ep-v^1_\ep$, then
\begin{align*}
-\Delta v^2_\ep = -(8\pi-\ep)\frac{h_1e^{u_\ep}}{\int_Mh_1e^{u_\ep}dv_g}\leq 0~~\text{in}~B_r(x_1),
\end{align*}
since $h_1>0$ in $B_r(x_1)$. By Theorem 8.17 in \cite{GT} and Lemma \ref{lem-Ls2}, we have for $s\in(1,2)$ that
\begin{align*}
\sup_{B_{r/2}(x_1)}v^2_\ep \leq& C\left(\|(v^2_\ep)^+\|_{L^s(B_r(x_1))}+C\right)\\
\leq& C\left(\|-u_\ep\|_{L^s(M)}+\|v^1_\ep\|_{L^s(B_r(x_1))}+C\right)\\
\leq& C\left(\|\nabla u_\ep\|_{L^s(M)}+\|v^1_\ep\|_{L^s(B_r(x_1))}+C\right)\\
\leq& C\left(\|v^1_\ep\|_{L^s(B_r(x_1))}+C\right).
\end{align*}
Since
$$\lim_{r\to0}\lim_{\ep\to0}\frac{\int_{B_r(x_1)}h_2e^{-u_\ep}dv_g}{\int_Mh_2e^{-u_\ep}dv_g}=\mu_2(\{x_1\})=0,$$
it follows from Theorem 1 in \cite{BM} that: for sufficiently small $r$,  $\int_{B_r(x_1)}e^{t|v^1_\ep|}dv_g\leq C$ for some $t>1$, which yields that
$$\|v^1_\ep\|_{L^s(B_r(x_1))}\leq C.$$
Then we have
$$\sup_{B_{r/2}(x_1)}v^2_\ep\leq C.$$
Note that
\begin{align*}
\int_{B_{r/2}(x_1)}e^{-tu_\ep}dv_g=&\int_{B_{r/2}(x_1)}e^{tv^2_\ep}e^{tv^1_\ep}dv_g\\
\leq& C\int_{B_{r/2}(x_1)}e^{t|v^1_\ep|}dv_g\\
\leq& C.
\end{align*}
By the standard elliptic estimates, we have
\begin{align*}
\|v^1_\ep\|_{L^{\infty}(B_{r/4}(x_1))} \leq C.
\end{align*}
Therefore, one obtains that
\begin{align*}
-u_\ep\leq C~~\text{in}~~B_{r/4}(x_1).
\end{align*}
Similarly, we can prove
\begin{align*}
u_\ep\leq C~~\text{in}~~B_{r/4}(x_2).
\end{align*}
This finishes the proof.
\end{proof}

Assume $\max_{M}u_\ep(x)=u_\ep(x_1^\ep)$, $\max_{M}(-u_\ep)(x)=-u_\ep(x_2^\ep)$, it follows from \eqref{uep-out2}, Lemmas \ref{mu_1mu_2} and \ref{upperbound} that
\begin{align*}
x_i^\ep\to x_i~~\text{as}~~\ep\to0,~i=1,2.
\end{align*}
Let $(\Omega_i;(x^1,x^2))$ be an isothermal coordinate system around $x_i$ and we assume the metric to be
$$g|_{\Omega_i} = e^{\phi_i}\left((dx^1)^2+(dx^2)^2\right),~~\phi_i(0)=0.$$
Similar as Case 1 in \cite{LL05} and Lemma 2.5 in \cite{DJLW97}, we have as $\ep\to0$ that
\begin{align}\label{bubble-2}
\begin{cases}
u_\ep(x^\ep_1+r_1^{\ep}x)-u_\ep(x^\ep_1)\to -2\log\left(1+\pi h_1(x_1)|x|^2\right),\\
-u_\ep(x^\ep_2+r_2^\ep x)+u_\ep(x^\ep_2)\to -2\log(1+\pi h_2(x_2)|x|^2),
\end{cases}
\end{align}
where $r_i^\ep=e^{-m_i^\ep/2}$ for $i=1,2$ with $m_1^\ep=u_\ep(x_1^\ep)-\log\int_Mh_1e^{u_\ep}dv_g$ and  $m_2^\ep=-u_\ep(x_2^\ep)-\log\int_Mh_2e^{-u_\ep}dv_g$.

By taking \eqref{h_2mu_2} into \eqref{eq-green-2}, we have
\begin{align}\label{green-2}
\begin{cases}
-\Delta G = 8\pi(\delta_{x_1}-\delta_{x_2}),\\
\int_MGdv_g=0.
\end{cases}
\end{align}
Recalling that for any $s\in(1,2)$, we have
$u_\ep$ converges to $G$ weakly in $W^{1,s}(M)$ and strongly in $C^2_{\text{loc}}(M\setminus\{x_1,x_2\})$ as $\ep\to0$.
Therefore, in $\Omega_1$,
\begin{align}\label{green2-1}
G(x) = -4\log r + A_{x_1}+f_1,
\end{align}
where $f_1$ is a smooth function which is zero at $x_1$. In $\Omega_2$,
\begin{align}\label{green2-2}
G(x) = 4\log r + A_{x_2}+ f_2,
\end{align}
where $f_2$ is a smooth function which is zero at $x_2$.

\subsection{The lower bound for the full critical case}

In this subsection, by assume $u_\ep$ blows up and Case 3 happens, we shall derive an explicit lower bound for $J_{8\pi,8\pi}$.

Let $\delta>0$ small enough such that \eqref{green2-1} holds in $B_\delta(x_1^\ep)$ and \eqref{green2-2} holds in $B_\delta(x_2^\ep)$.
Recalling that $r_i^\ep=e^{-m_i^\ep/2}$, we assume $\ep$ is small enough such that $\delta>Lr_i^\ep$ ($i=1,2$)
for any fixed $L>0$. Then, one has
\begin{align*}
\int_{M}|\nabla u_\ep|^2dv_g = &\int_{B_{Lr_1^\ep}(x_1^\ep)}|\nabla u_\ep|^2dv_g+\int_{B_{Lr_2^\ep}(x_2^\ep)}|\nabla u_\ep|^2dv_g\nonumber\\
         &+\int_{B_\delta(x_1^\ep)\setminus B_{Lr_1^\ep}(x_1^\ep)}|\nabla u_\ep|^2dv_g
          +\int_{B_\delta(x_2^\ep)\setminus B_{Lr_2^\ep}(x_2^\ep)}|\nabla u_\ep|^2dv_g\nonumber\\
         &+\int_{M\setminus (B_\delta(x_1^\ep)\cup B_\delta(x_2^\ep))}|\nabla u_\ep|^2dv_g.
\end{align*}
By \eqref{bubble-2}, we have
\begin{align}\label{lower2-2}
\int_{B_{Lr_1^\ep}(x_1^\ep)}|\nabla u_\ep|^2dv_g = 16\pi\log\left(1+\pi h_1(x_1)L^2\right)-16\pi+o_\ep(1)+o_R(1)
\end{align}
and
\begin{align}\label{lower2-3}
\int_{B_{Lr_2^\ep}(x_2^\ep)}|\nabla u_\ep|^2dv_g = 16\pi\log(1+\pi h_2(x_2)L^2)-16\pi+o_\ep(1)+o_R(1).
\end{align}
By \eqref{green2-1} and \eqref{green2-2}, we obtain
\begin{align}\label{lower2-4}
&\int_{M\setminus (B_\delta(x_1^\ep)\cup B_\delta(x_2^\ep))}|\nabla u_\ep|^2dv_g\nonumber\\
=& \int_{M\setminus (B_{\delta}(x_1)\cup B_{\delta}(x_2))}|\nabla G|^2dv_g+o_\ep(1)\nonumber\\
=&  - \int_{\partial B_{\delta}(x_1)}G\frac{\partial G}{\partial n}ds_g
 - \int_{\partial B_{\delta}(x_2)}G\frac{\partial G}{\partial n}ds_g+o_\delta(1)+o_\ep(1)\nonumber\\
=& -64\pi\log\delta+8\pi (A_{x_1}-A_{x_2})+o_\delta(1)+o_\ep(1).
\end{align}
To estimate $\int_{B_{\delta}(x_i^\ep)\setminus B_{Lr_i^\ep}(x_i^\ep)}|\nabla u_\ep|^2dv_g$ ($i=1,2$),
we shall follow \cite{LL05} closely. Denote by
\begin{align*}
a_1^\ep = \inf_{\partial B_{Lr_1^\ep}(x_1^\ep)}u_\ep,~~~~b_1^\ep = \sup_{\partial B_{\delta}(x_1^\ep)}u_\ep,\\
a_2^\ep = \inf_{\partial B_{Lr_2^\ep}(x_2^\ep)}(-u_\ep),~~~~b_2^\ep = \sup_{\partial B_{\delta}(x_2^\ep)}(-u_\ep).
\end{align*}
We set $a_1^\ep-b_1^\ep=u_\ep(x_1^\ep)+d_1^\ep$ and $a_2^\ep-b_2^\ep=-u_\ep(x_2^\ep)+d_2^\ep$. Then
\begin{align*}
d_1^\ep = -2\log\left(1+\pi h_1(x_1)L^2\right)-\sup_{\partial B_{\delta}(x_1)}G+o_{\ep}(1),\\
d_2^\ep = -2\log(1+\pi h_2(x_2)L^2)-\sup_{\p B_{\delta}(x_2)}(-G)+o_\ep(1).
\end{align*}
Define $f_1^\ep=\max\{\min\{u_\ep,a_1^\ep\},b_1^\ep\}$ and $f_2^\ep=\max\{\min\{-u_\ep,a_2^\ep\},b_2^\ep\}$. We have
\begin{align*}
\int_{B_{\delta}(x_i^\ep)\setminus B_{Lr_i^\ep}(x_i^\ep)}|\nabla u_\ep|^2dv_g
\geq&\int_{B_{\delta}(x_i^\ep)\setminus B_{Lr_i^\ep}(x_i^\ep)}|\nabla f_i^\ep|^2dv_g\\
=&\int_{B_{\delta}(x_i^\ep)\setminus B_{Lr_i^\ep}(x_i^\ep)}|\nabla_{\mathbb{R}^2} f_i^\ep|^2dx\\
\geq&\inf_{\Psi_i|_{\partial B_{Lr_i^\ep}(0)}=a_i^\ep,\Psi_i|_{\partial B_{\delta}(0)}=b_i^\ep}\int_{B_{\delta}(0)\setminus B_{Lr_i^\ep}(0)}|\nabla_{\mathbb{R}^2} \Psi_i|^2dx.
\end{align*}
By the Dirichlet's principle, we know
$$\inf_{\Psi_i|_{\partial B_{Lr_i^\ep}(0)}=a_i^\ep,\Psi|_{\partial B_{\delta}(0)}=b_i^\ep}\int_{B_{\delta}(0)\setminus B_{Lr_i^\ep}(0)}|\nabla_{\mathbb{R}^2} \Psi_i|^2dx$$
is uniquely attained by the harmonic function
\begin{align*}
\begin{cases}
&-\Delta_{\mathbb{R}^2}\phi_i = 0~~\text{in}~~B_{\delta}(0)\setminus B_{Lr_i^\ep}(0),\\
&\phi_i|_{\partial B_{Lr_i^\ep}(0)}=a_i^\ep,\phi_i|_{\partial B_{\delta}(0)}=b_i^\ep.
\end{cases}
\end{align*}
Thus,
\begin{align*}
\phi_i=\frac{a_i^\ep-b_i^\ep}{-\log Lr_i^\ep+\log\delta}\log r - \frac{a_i^\ep\log\delta-b_i^\ep\log Lr_i^\ep}{-\log Lr_i^\ep+\log\delta},
\end{align*}
and then
\begin{align*}
\int_{B_{\delta}(0)\setminus B_{Lr_i^\ep}(0)}|\nabla_{\mathbb{R}^2}\phi_i|^2dx = \frac{4\pi(a_i^\ep-b_i^\ep)^2}{-\log(Lr_i^\ep)^2+\log\delta^2}.
\end{align*}
Concluding, we have
\begin{align*}
\int_{B_{\delta}(x_i^\ep)\setminus B_{Lr_i^\ep}(x_i^\ep)}|\nabla u_\ep|^2dv_g
\geq \frac{4\pi(a_i^\ep-b_i^\ep)^2}{-\log(Lr_i^\ep)^2+\log\delta^2}.
\end{align*}
Since $-\log(r_i^\ep)^2=m_i^\ep$, we obtain
\begin{align}\label{eq2-neck-1}
\int_{B_{\delta}(x_i^\ep)\setminus B_{Lr_i^\ep}(x_i^\ep)}|\nabla u_\ep|^2dv_g
\geq& 4\pi\frac{(m_i^\ep+\lambda_i^\ep+d_\ep)^2}{m_i^\ep-\log L^2+\log\delta^2},
\end{align}
where $\lambda_1^\ep=\log\int_Mh_1e^{u_\ep}dv_g$ and $\lambda_2^\ep=\log\int_Mh_2e^{-u_\ep}dv_g$.
One has
\begin{align}\label{eq2-neck-2}
&\frac{1}{2}\left(\int_{B_{\delta}(x_1^\ep)\setminus B_{Lr_1^\ep}(x_1^\ep)}|\nabla u_\ep|^2dv_g
+\int_{B_{\delta}(x_2^\ep)\setminus B_{Lr_2^\ep}(x_2^\ep)}|\nabla u_\ep|^2dv_g\right)\nonumber\\
&\,\,\,\,-(8\pi-\ep)\log\int_Mh_1e^{u_\ep}dv_g-(8\pi-\ep)\log\int_Mh_2e^{-u_\ep}dv_g\nonumber\\
\leq& J_{8\pi-\ep,8\pi-\ep}(u_\ep)\leq C.
\end{align}
It follows form \eqref{eq2-neck-1} and \eqref{eq2-neck-2} that
\begin{align}\label{eq2-neck-3}
2\pi\left(\frac{(m_1^\ep+\lambda_1^\ep+d_1^\ep)^2}{m_1^\ep-\log L^2+\log\delta^2}
+\frac{(m_2^\ep+\lambda_2^\ep+d_2^\ep)^2}{m_2^\ep-\log L^2+\log\delta^2}\right)
-(8\pi-\ep)(\lambda_1^\ep+\lambda_2^\ep)\leq C.
\end{align}
Recalling that $\lambda_i^\ep\to+\infty$ and $m_i^\ep\to+\infty$ as $\ep\to0$ for $i=1,2$, we get from \eqref{eq2-neck-3}
\begin{align}\label{eq2-neck-4}
\frac{\lambda_i^\ep}{m_i^\ep}=1+o_{\ep}(1).
\end{align}
Taking \eqref{eq2-neck-4} into \eqref{eq2-neck-1}, we have
\begin{align}\label{eq2-neck-5}
\int_{B_{\delta}(x_i^\ep)\setminus B_{Lr_i^\ep}(x_i^\ep)}|\nabla u_\ep|^2dv_g
\geq&4\pi\frac{(m_i^\ep+\lambda_i^\ep)^2}{m_i^\ep}\nonumber\\
&+16\pi\left(d_i^\ep+\log L^2-\log\delta^2+o_{\ep}(1)\right).
\end{align}

It follows from \eqref{lower2-2}, \eqref{lower2-3}, \eqref{lower2-4} and \eqref{eq2-neck-5} that
\begin{align}\label{lower2}
J_{8\pi-\ep,8\pi-\ep}(u_\ep)\geq&-16\pi-16\pi\log\pi-4\pi\left(2\log h_1(x_1)+A_{x_1}\right)\nonumber\\
&-4\pi\left(2\log h_2(x_2)-A_{x_2}\right)+o_\ep(1)+o_L(1)+o_{\delta}(1).
\end{align}
By letting $\ep\to0$ in \eqref{lower2} first, then $L\to+\infty$ and then $\delta\to0$, we obtain finally that
\begin{align}\label{lower2-bound}
\inf_{\mathcal{E}}J_{8\pi,8\pi}(u)\geq&-16\pi-16\pi\log\pi-4\pi\left(2\log h_1(x_1)+A_{x_1}\right)\nonumber\\
&-4\pi\left(2\log h_2(x_2)-A_{x_2}\right).
\end{align}

\subsection{Test function for the full critical case}
In this subsection, we construct a sequence of test functions (still denote by $\phi_\ep$). We shall prove that, under assumption \eqref{cond-2},
\begin{align*}
J_{8\pi,8\pi}(\phi_\ep)<&-16\pi-16\pi\log\pi-4\pi\left(2\log h_1(x_1)+A_{x_1}\right)\nonumber\\
&-4\pi\left(2\log h_2(x_2)-A_{x_2}\right),
\end{align*}
for $\ep>0$ sufficiently small. This contradicts with \eqref{lower2-bound}.

Let $(\Omega_i;(x,y))$ be an isothermal coordinate system around $x_i$ for $i=1,2$ and we assume the metric to be
$$g|_{\Omega_i} = e^{\phi_i}(dx^2+dy^2),$$
and
$$\phi_i=b_1(x_i)x+b_2(x_i)y+c_1(x_i)x^2+c_2(x_i)y^2+c_{12}(x_i)xy+O(r^3),$$
where $r(x,y)=\sqrt{(x)^2+(y)^2}$.
Moreover we assume near $x_i$ that
\begin{align*}
G=a_i\log r+A_{x_i}+&\lambda_ix+\nu_iy+\alpha_ix^2+\beta_iy^2\nonumber\\&+\xi_ixy+\tau_i(x,y)+O(r^4), ~i=1,2,
\end{align*}
where $a_1=-4,~a_2=4$. It is well known that
\begin{align*}
K(p)=-(c_1(p)+c_2(p)),\\
|\nabla u|^2dv_g=|\nabla u|^2dxdy,
\end{align*}
and
\begin{align*}
\frac{\p u}{\p n}dS_g=\frac{\p u}{\p r}rd\theta.
\end{align*}
For $\alpha_i$ and $\beta_i$, we have the following lemma:
\begin{lem}\label{lem-alpha}
It holds that
\begin{align*}
\alpha_i+\beta_i=0,~~i=1,2.
\end{align*}
\end{lem}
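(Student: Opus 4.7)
The plan is to follow the strategy behind the proof of Lemma \ref{lem-alpha1}: evaluate $\Delta_{\mathbb{R}^2} G$ at $x_i$ in two distinct ways and equate them. The decisive simplification in the present full critical setting is that equation \eqref{green-2} has zero right-hand side away from $\{x_1, x_2\}$, so the Euclidean Laplacian of the regular part of $G$ vanishes identically rather than equalling a positive constant as in \eqref{lem-alpha1}.

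First, I would fix $i \in \{1,2\}$ and work in the isothermal chart $(\Omega_i;(x,y))$ around $x_i$, identified with a neighborhood of $0\in\mathbb{R}^2$. Since $\Omega_i$ may be chosen to avoid $x_{3-i}$, equation \eqref{green-2} localizes to $-\Delta_g G = \pm 8\pi\,\delta_{x_i}$, with the sign $+$ when $i=1$ and $-$ when $i=2$. Exploiting the isothermal identities $dv_g = e^{\phi_i}\,dx\,dy$ and $\Delta_g = e^{-\phi_i}\Delta_{\mathbb{R}^2}$ in the distributional pairing with test functions, so that $\Delta_g\varphi\,dv_g = \Delta_{\mathbb{R}^2}\varphi\,dx\,dy$, yields $-\Delta_{\mathbb{R}^2} G = \pm 8\pi\,\delta_0$ on $\Omega_i$. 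Combined with $\Delta_{\mathbb{R}^2}(a_i\log r) = 2\pi a_i\,\delta_0$ and the values $a_1 = -4$, $a_2 = 4$, this gives
\begin{align*}
\Delta_{\mathbb{R}^2}(G - a_i\log r) = 0 \quad \text{distributionally on } \Omega_i.
\end{align*}

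Next, the function $H_i := G - a_i\log r$ is smooth at $0$ by construction, so its distributional Laplacian coincides with its classical one and $H_i$ is a genuine harmonic function near the origin. Reading off the Taylor expansion
\begin{align*}
H_i = A_{x_i} + \lambda_i x + \nu_i y + \alpha_i x^2 + \beta_i y^2 + \xi_i xy + \tau_i(x,y) + O(r^4),
\end{align*}
one has $\partial_{xx}H_i(0) = 2\alpha_i$ and $\partial_{yy}H_i(0) = 2\beta_i$, while the cubic block $\tau_i$ contributes only a linear polynomial (vanishing at $0$) to $\Delta_{\mathbb{R}^2} H_i$ and the $O(r^4)$ tail contributes $O(r^2)$. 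Hence
\begin{align*}
0 = \Delta_{\mathbb{R}^2} H_i(0) = 2\alpha_i + 2\beta_i,
\end{align*}
which is the claim.

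The only delicate point is the distributional bookkeeping that converts the $dv_g$-equation $-\Delta_g G = 8\pi(\delta_{x_1} - \delta_{x_2})$ into the Euclidean identity $-\Delta_{\mathbb{R}^2} G = \pm 8\pi\,\delta_0$; once the cancellation of conformal factors in $\Delta_g\varphi\,dv_g = \Delta_{\mathbb{R}^2}\varphi\,dx\,dy$ is noted, the remaining argument is a routine Taylor computation, carried out identically for $i=1$ and $i=2$.
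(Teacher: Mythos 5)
Your proof is correct and follows essentially the same route as the paper's: the regular part of $G$ near $x_i$ has vanishing Euclidean Laplacian (because $G$ is Green's for $8\pi(\delta_{x_1}-\delta_{x_2})$ with no background term in the full critical case), and the Taylor expansion then forces $2\alpha_i+2\beta_i=0$. The paper's own proof is a one-liner that evaluates $\Delta_{\mathbb{R}^2}G=0$ away from the singularity and lets $r\to0$; your distributional bookkeeping (peeling off $a_i\log r$ and identifying the smooth harmonic remainder $H_i$) makes the same computation rigorous but does not change the argument.
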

\begin{proof}
We have near $x_i$ that
\begin{align*}
2\alpha_i+2\beta_i+O(r)=\Delta_{\mathbb{R}^2}G=0.
\end{align*}
then the lemma is proved by letting $r\to0$.
\end{proof}

We consider the test functions
\begin{align*}
\phi_\ep=
\begin{cases}
w(\frac{x}{\ep})+\lambda_1r\cos\theta+\nu_1r\sin\theta~~&\text{in}~~B_{L\ep}(x_1)\\
G-\eta_1H_{x_1}+4\log L\ep-2\log(1+\pi L^2)-A_{x_1}~~&\text{in}~~B_{2L\ep}(x_1)\setminus B_{L\ep}(x_1)\\
G-\eta_2H_{x_2}+4\log L\ep-2\log(1+\pi L^2)-A_{x_1}~~&\text{in}~~B_{2L\ep}(x_2)\setminus B_{L\ep}(x_2)\\
-w(\frac{x}{\ep})+\lambda_2r\cos\theta+\nu_2r\sin\theta+8\log L\ep\\
~~~~~~~~-4\log(1+\pi L^2)-A_{x_1}+A_{x_2}~~&\text{in}~~B_{L\ep}(x_2)\\
G+4\log L\ep-2\log(1+\pi L^2)-A_{x_1}~~&\text{otherwise.}
\end{cases}
\end{align*}
Here, for $i=1,2$, we have near $x_i$ that
$$H_{x_i}=\alpha_ir^2\cos^2\theta+\beta_ir^2\sin^2\theta+\xi_ir^2\cos\theta\sin\theta +\tau_i(r,\theta)+O(r^4)$$
and
$\eta_i$ is a cut-off function which equals $1$ in $B_{L\ep}(x_i)$, equals $0$ in $M\setminus B_{2L\ep}(x_i)$ and satisfies $|\nabla\eta_i|\leq\frac{C}{L\ep}$.

Now, we calculate $J_{8\pi,8\pi}(\phi_\ep-\overline{\phi_\ep})$. Since $J_{8\pi,8\pi}(u+c)=J_{8\pi,8\pi}(u)$ for any constant $c$, it is equivalent
to calculate $J_{8\pi,8\pi}(\phi_\ep)$. We would like to remark that after calculating $\int_Mh_1e^{\phi_\ep}dv_g$ and $\int_Mh_2e^{-\phi_\ep}dv_g$, we shall see this two terms are both positive, therefore $\phi_\ep-\overline{\phi_\ep}\in\mathcal{E}$.

Firstly, we have
\begin{align}\label{test-energy}
\int_M|\nabla \phi_\ep|^2dv_g=&\int_{B_{L\ep}(x_1)\cup B_{L\ep}(x_2)}|\nabla \phi_\ep|^2dv_g+\int_{M\setminus(B_{L\ep}(x_1)\cup B_{L\ep}(x_2))}|\nabla G|^2dv_g\nonumber\\
&-2\sum_{i=1,2}\int_{B_{2L\ep}(x_i)\setminus B_{L\ep}(x_i)}\nabla G\nabla(\eta_iH_{x_i})dv_g\nonumber\\
&+\sum_{i=1,2}\int_{B_{2L\ep}(x_i)\setminus B_{L\ep}(x_i)}|\nabla(\eta_iH_{x_i})|^2dv_g.
\end{align}
By direct calculations, we have
\begin{align}\label{test-energy-1}
&\int_{B_{L\ep}(x_1)\cup B_{L\ep}(x_2)}|\nabla \phi_\ep|^2dv_g\nonumber\\
=&2\int_{B_L}|\nabla w|^2dxdy+\pi(\lambda_1^2+\nu_1^2+\lambda_2^2+\nu_2^2)(L\ep)^2\nonumber\\
=&32\pi\log(1+\pi L^2)-\frac{32\pi^2L^2}{1+\pi L^2}+\pi(\lambda_1^2+\nu_1^2+\lambda_2^2+\nu_2^2)(L\ep)^2.
\end{align}
Using \eqref{green-2} and integrating by parts, one obtains
\begin{align}\label{test-energy-2}
\int_{M\setminus(B_{L\ep}(x_1)\cup B_{L\ep}(x_2))}|\nabla G|^2dv_g
=-\int_{\p B_{L\ep}(x_1)}G\frac{\p G}{\p n}dS_g-\int_{\p B_{L\ep}(x_2)}G\frac{\p G}{\p n}dS_g.
\end{align}
Since $\int_0^{2\pi}\tau_i(r,\theta)d\theta=\int_0^{2\pi}\frac{\p \tau_i}{\p r}(r,\theta)d\theta=0$, we have
\begin{align*}
\int_{\p B_r(x_i)}G\frac{\p G}{\p n}dS_g
=&\int_0^{2\pi}\left(a_i\log r+A_{x_i}+\lambda_ir\cos\theta+\nu_ir\sin\theta+\alpha_ir^2\cos^2\theta\right.\\
                    &\left.~~~~+\beta_ir^2\sin^2\theta+\xi_ir^2\cos\theta\sin\theta +\tau_i(r,\theta)+O(r^4)\right)\\
              &\times\left(\frac{a_i}{r}+\lambda_i\cos\theta+\nu_i\sin\theta+2\alpha_ir\cos^2\theta\right.\\
                    &\left.~~~~+2\beta_ir\sin^2\theta+2\xi_ir\cos\theta\sin\theta +\frac{\p\tau_i}{\p r}(r,\theta)+O(r^3)\right)rd\theta\\
=&2\pi a_i^2\log r+2\pi a_iA_{x_i}+\pi a_i(\alpha_i+\beta_i)r^2+\pi(\lambda_i^2+\nu_i^2)r^2\\
 &+2\pi a_i(\alpha_i+\beta_i)r^2\log r+2\pi A_{x_i}(\alpha_i+\beta_i)r^2+O(r^4\log r)\\
=&2\pi a_i^2\log r+2\pi a_iA_{x_i}+\pi(\lambda_i^2+\nu_i^2)r^2+O(r^4\log r),
\end{align*}
where in the last equality we have used $\alpha_i+\beta_i=0$ (cf. Lemma \ref{lem-alpha}). Taking this into \eqref{test-energy-2} and noticing $a_1=-a_2=-4$, one obtains
\begin{align}\label{test-energy-3}
\int_{M\setminus(B_{L\ep}(x_1)\cup B_{L\ep}(x_2))}&|\nabla G|^2dv_g=-64\pi\log(L\ep)+8\pi(A_{x_1}-A_{x_2})\nonumber\\
&-\pi(\lambda_1^2+\nu_1^2+\lambda_2^2+\nu_2^2)(L\ep)^2+O((L\ep)^4\log(L\ep)).
\end{align}
Integrating by parts and using \eqref{green-2}, we have
\begin{align}\label{test-energy-4}
&\int_{B_{2L\ep}(x_i)\setminus B_{L\ep}(x_i)}\nabla G\nabla(\eta_iH_{x_i})dv_g\nonumber\\
=&-\int_{\p B_{L\ep}(x_i)}H_{x_i}\frac{\p G}{\p n}dS_g\nonumber\\
=&-\int_0^{2\pi}\left(\alpha_ir^2\cos^2\theta+\beta_ir^2\sin^2\theta+\xi_ir^2\cos\theta\sin\theta +\tau_i(r,\theta)+O(r^4)\right)\nonumber\\
 &~~~~~~~~\times\left(\frac{a_i}{r}+\lambda_i\cos\theta+\nu_i\sin\theta+O(r)\right)r|_{r=L\ep}d\theta\nonumber\\
=&-\pi a_i(\alpha_i+\beta_i)(L\ep)^2+O((L\ep)^4)\nonumber\\
=&O((L\ep)^4),
\end{align}
where in the last equality we have used $\alpha_i+\beta_i=0$. It is clear that
\begin{align}\label{test-energy-5}
\int_{B_{2L\ep}(x_i)\setminus B_{L\ep}(x_i)}|\nabla(\eta_iH_{x_i})|^2dv_g=\int_{B_{2L\ep}(x_i)\setminus B_{L\ep}(x_i)}O(r^2)dv_g=O((L\ep)^4).
\end{align}
Substituting \eqref{test-energy-1}, \eqref{test-energy-3}, \eqref{test-energy-4} and \eqref{test-energy-5} into \eqref{test-energy}, one obtains
\begin{align}\label{test-energy-last}
\int_M|\nabla \phi_\ep|^2dv_g=&32\pi\log(1+\pi L^2)-\frac{32\pi^2L^2}{1+\pi L^2}-64\pi\log(L\ep)\nonumber\\
&+8\pi(A_{x_1}-A_{x_2})+O((L\ep)^4\log(L\ep)).
\end{align}

Denote by $\mathcal{M}_1=\frac{1}{\pi}(-\frac{K(x_1)}{2}+\frac{(\lambda_1+b_1(x_1))^2+(\nu_1+b_2(x_1))^2}{4})$. We have
\begin{align}\label{test-phi-1}
&\int_{B_{L\ep}(x_1)}e^{\phi_\ep}dv_g\nonumber\\
=&\ep^2\int_{B_L}\frac{e^{(\lambda_1+b_1(x_1))\ep x+(\nu_1+b_2(x_1))\ep y+c_1(x_1)\ep^2x^2+c_2(x_1)\ep^2y^2+c_{12}(x_1)\ep^2xy+O((\ep r)^3)}}{(1+\pi r^2)^2}dxdy\nonumber\\
=&\ep^2\left(1-\frac{1}{1+\pi L^2}+\ep^2\mathcal{M}_1\log(1+\pi L^2)+O(\ep^2)\right).
\end{align}
Since $\alpha_i+\beta_i=0$ for $i=1,2$, we have
\begin{align}\label{test-phi-2}
&\int_{B_{\delta}(x_1)\setminus B_{L\ep}(x_1)}e^{\phi_\ep}dv_g\nonumber\\
=&\frac{(L\ep)^4}{(1+\pi L^2)^2}\int_0^{2\pi}\int_{L\ep}^{\delta}\frac{1}{r^4}e^{\lambda_1r\cos\theta+\nu_1r\sin\theta+(1-\eta_1)H_{x_1}}\nonumber\\
&\times e^{b_1(x_1)r\cos\theta+b_2(x_1)r\sin\theta+c_1(x_1)r^2\cos^2\theta+c_2(x_1)r^2\sin^2\theta+c_{12}(x_1)r^2\cos\theta\sin\theta+O(r^3)}rdrd\theta\nonumber\\
=&\frac{(L\ep)^4}{(1+\pi L^2)^2}\int_{L\ep}^{\delta}\left(\frac{2\pi}{r^3}+2\pi^2\mathcal{M}_1\frac{1}{r}+O(1)\right)dr\nonumber\\
=&\ep^2\left(\frac{\pi L^2}{(1+\pi L^2)^2}-\mathcal{M}_1\ep^2\log(L\ep)^2+O(\ep^2)+O\left(\frac{1}{L^4}\right)\right).
\end{align}
It is clear that
\begin{align}\label{test-phi-3}
\int_{M\setminus B_{\delta}(x_1)}e^{\phi_\ep}dv_g=O(\ep^4).
\end{align}
From \eqref{test-phi-1}, \eqref{test-phi-2} and \eqref{test-phi-3} one obtains
\begin{align*}
\int_Me^{\phi_\ep}dv_g=\ep^2\left(1+\mathcal{M}_1\ep^2(-\log\ep^2)+O(\ep^2)+O\left(\frac{1}{L^4}\right)\right).
\end{align*}

For small $\delta>0$, we assume in $B_\delta(x_1)$ that
\begin{align*}
h_1(x)-h_1(x_1)=&k_1r\cos\theta+k_2r\sin\theta\\
&+k_3r^2\cos^2\theta+2k_4r^2\cos\theta\sin\theta+k_5r^2\sin^2\theta+O(r^3).
\end{align*}
By a simple calculation, we obtain
\begin{align*}
&\int_{B_{L\ep}(x_1)}(h_1(x)-h_1(x_1))e^{\phi_\ep}dv_g\\
=&\frac{1}{2\pi}\left[k_3+k_5+k_1(\lambda_1+b_1(x_1))+k_2(\nu_1+b_2(x_1))\right]\ep^4\log(1+\pi L^2)+O(\ep^4)
\end{align*}
and
\begin{align*}
&\int_{M\setminus B_{L\ep}(x_1)}(h_1(x)-h_1(x_1))e^{\phi_\ep}dv_g\\
=&\int_{B_{\delta}(x_1)\setminus B_{L\ep}(x_1)}(h_1(x)-h_1(x_1))e^{\phi_\ep}dv_g+\int_{M\setminus B_{\delta}(x_1)}(h_1(x)-h_1(x_1))e^{\phi_\ep}dv_g\\
=&-\frac{1}{2\pi}\left[k_3+k_5+k_1(\lambda_1+b_1(x_1))+k_2(\nu_1+b_2(x_1))\right]\ep^4\log((L\ep)^2)+O(\ep^4).
\end{align*}
Therefore, we have
\begin{align*}
&\int_{M}h_1e^{\phi_\ep}dv_g\nonumber\\
=&h_1(x_1)\ep^2\left[1+\left(\mathcal{M}_1+\frac{k_3+k_5+k_1(\lambda_1+b_1(x_1))+k_2(\nu_1+b_2(x_1))}{2\pi h_1(x_1)}\right)\ep^2(-\log\ep^2)\right.\\
 &~~~~~~~~~~~~~~~~\left.+O(\ep^2)+O\left(\frac{1}{L^4}\right)\right].
\end{align*}
Notice that
\begin{align*}
\mathcal{M}_1+\frac{k_3+k_5+k_1(\lambda_1+b_1(x_1))+k_2(\nu_1+b_2(x_1))}{2\pi h_1(x_1)}\geq\frac{1}{4\pi}\left[\Delta\log h_1(x_1)-2K(x_1)\right].
\end{align*}
So,
\begin{align}\label{test-hphi}
&\log\int_{M}h_1e^{\phi_\ep}dv_g\geq\log h_1(x_1)+\log\ep^2\nonumber\\
&~~~~~~~~~+\frac{1}{4\pi}\left[\Delta\log h_1(x_1)-2K(x_1)\right]\ep^2(-\log\ep^2)+O(\ep^2)+O\left(\frac{1}{L^4}\right).
\end{align}

Lastly, we calculate $\int_Mh_2e^{-\phi_\ep}dv_g$.

Denote by $\mathcal{M}_2=\frac{1}{\pi}\left(-\frac{K(x_2)}{2}+\frac{(-\lambda_2+b_1(x_2))^2+(-\nu_2+b_2(x_2))^2}{4}\right)$. We have
\begin{align}\label{test--phi-1}
&\int_{B_{L\ep}(x_2)}e^{-\phi_\ep}dv_g\nonumber\\
=&\frac{(1+\pi L^2)^4}{(L\ep)^8}e^{A_{x_1}-A_{x_2}}\ep^2\left(1-\frac{1}{1+\pi L^2}+\ep^2\mathcal{M}_2\log(1+\pi L^2)+O(\ep^2)\right).
\end{align}
By direct calculations and using the fact $\alpha_2+\beta_2=0$, we have
\begin{align}\label{test--phi-2}
&\int_{B_{\delta}(x_2)\setminus B_{L\ep}(x_2)}e^{-\phi_\ep}dv_g\nonumber\\
=&\frac{(1+\pi L^2)^2}{(L\ep)^4}e^{A_{x_1}-A_{x_2}}\left(\frac{\pi}{(L\ep)^2}-\pi^2\mathcal{M}_2\log(L\ep)^2+O(1)\right).
\end{align}
It is clear that
\begin{align}\label{test--phi-3}
\int_{M\setminus B_{\delta}(x_2)}e^{-\phi_\ep}dv_g=O(\ep^{-4}).
\end{align}

Using \eqref{test--phi-1}, \eqref{test--phi-2} and \eqref{test--phi-3} one obtains
\begin{align*}
\int_Me^{-\phi_\ep}dv_g=\pi^4\ep^{-6}e^{A_{x_1}-A_{x_2}}\left(1+\frac{4}{\pi L^2}+\mathcal{M}_2\ep^2(-\log\ep^2)+O(\ep^2)+O\left(\frac{1}{L^4}\right)\right).
\end{align*}

For small $\delta>0$, we assume in $B_\delta(x_2)$ that
\begin{align*}
h_2(x)-h_2(x_2)=&l_1r\cos\theta+l_2r\sin\theta\\
&+l_3r^2\cos^2\theta+2l_4r^2\cos\theta\sin\theta+l_5r^2\sin^2\theta+O(r^3).
\end{align*}

By direct calculations, we obtain
\begin{align*}
&\int_{B_{L\ep}(x_2)}(h_2-h_2(x_2))e^{-\phi_\ep}dv_g=\frac{(1+\pi L^2)^4}{(L\ep)^8}e^{A_{x_1}-A_{x_2}}\\
&\times\frac{1}{2\pi}\left[l_3+l_5+l_1(-\lambda_2+b_1(x_2))+l_2(-\nu_2+b_2(x_2))\right]\ep^4\log(1+\pi L^2)+O(\ep^{-4})
\end{align*}
and
\begin{align*}
&\int_{M\setminus B_{L\ep}(x_2)}(h_2-h_2(x_2))e^{-\phi_\ep}dv_g\\
=&\int_{B_{\delta}(x_2)\setminus B_{L\ep}(x_2)}(h_2-h_2(x_2))e^{-\phi_\ep}dv_g+\int_{M\setminus B_{\delta}(x_2)}(h_2-h_2(x_2))e^{-\phi_\ep}dv_g\\
=&\frac{(1+\pi L^2)^2}{(L\ep)^4}e^{A_{x_1}-A_{x_2}}\nonumber\\
&\times\left(-\frac{1}{2\pi}\left[l_3+l_5+l_1(-\lambda_2+b_1(x_2))+l_2(-\nu_2+b_2(x_2))\right]\right)
\log(L\ep)^2+O(\ep^{-4}).
\end{align*}
Therefore, we have
\begin{align*}
&\int_{M}h_2e^{-\phi_\ep}dv_g=h_2(x_2)e^{A_{x_1}-A_{x_2}}\pi^4\ep^{-6}\nonumber\\
 &\times\left[1+\frac{4}{\pi L^2}+(\mathcal{M}_2+\frac{l_3+l_5+l_1(-\lambda_2+b_1(x_2))+l_2(-\nu_2+b_2(x_2))}{2\pi h_2(x_2)})\ep^2(-\log\ep^2)\right.\\
 &~~~~~~~~~~~~~~~~\left.+O(\ep^2)+O\left(\frac{1}{L^4}\right)\right].
\end{align*}
Notice that
\begin{align*}
\mathcal{M}_2+\frac{l_3+l_5+l_1(-\lambda_2+b_1(x_2))+l_2(-\nu_2+b_2(x_2))}{2\pi h_2(x_2)}\geq\frac{1}{4\pi}\left[\Delta\log h_2(x_2)-2K(x_2)\right].
\end{align*}
So,
\begin{align}\label{test--hphi}
&\log\int_{M}h_2e^{-\phi_\ep}dv_g\geq\log h_2(x_2)+(A_{x_1}-A_{x_2})+4\log\pi-6\log\ep+\frac{4}{\pi L^2}\nonumber\\
&~~~~~~~~~+\frac{1}{4\pi}\left[\Delta\log h_2(x_2)-2K(x_2)\right]\ep^2(-\log\ep^2)+O(\ep^2)+O\left(\frac{1}{L^4}\right).
\end{align}

Finally, by \eqref{test-energy-last}, \eqref{test-hphi} and \eqref{test--hphi}, we have
\begin{align}\label{uper-bound2}
J_{8\pi,8\pi}(\phi_\ep)\leq&16\pi\log(1+\pi L^2)-\frac{16\pi^2L^2}{1+\pi L^2}-32\pi\log(L\ep)\nonumber\\
&+4\pi(A_{x_1}-A_{x_2})+O((L\ep)^4\log(L\ep))\nonumber\\
&-8\pi\log h_1(x_1)-8\pi\log\ep^2\nonumber\\
&-2\left[\Delta\log h_1(x_1)-2K(x_1)\right]\ep^2(-\log\ep^2)+O(\ep^2)+O\left(\frac{1}{L^4}\right)\nonumber\\
&-8\pi\log h_2(x_2)-8\pi(A_{x_1}-A_{x_2})-32\pi\log\pi+48\pi\log\ep-\frac{32}{L^2}\nonumber\\
&-2\left[\Delta\log h_2(x_2)-2K(x_2)\right]\ep^2(-\log\ep^2)+O(\ep^2)+O\left(\frac{1}{L^4}\right)\nonumber\\
=&-16\pi\log\pi-16\pi-4\pi(A_{x_1}-A_{x_2})-8\pi\log h_1(x_1)-8\pi\log h_2(x_2)\nonumber\\
&-2\left[\Delta\log h_1(x_1)-2K(x_1)+\Delta\log h_2(x_2)-2K(x_2)\right]\ep^2(-\log\ep^2)\nonumber\\
&+O((L\ep)^4\log(L\ep))+O(\ep^2)+O\left(\frac{1}{L^4}\right).
\end{align}
Letting $L^4\ep^2=\frac{1}{\log(-\log\ep)}$ in \eqref{uper-bound2}, we obtain
\begin{align*}
J_{8\pi,8\pi}(\phi_\ep)\leq&-16\pi\log\pi-16\pi-4\pi(A_{x_1}-A_{x_2})-8\pi\log h_1(x_1)-8\pi\log h_2(x_2)\nonumber\\
&-2\left[\Delta\log h_1(x_1)-2K(x_1)+\Delta\log h_2(x_2)-2K(x_2)\right]\ep^2(-\log\ep^2)\nonumber\\
&+o(\ep^2(-\log\ep^2)).
\end{align*}
Under assumption \eqref{cond-2}, one has
$$\Delta\log h_1(x_1)-2K(x_1)+\Delta\log h_2(x_2)-2K(x_2)>0.$$
Then for sufficiently small $\ep>0$, we have
\begin{align*}
J_{8\pi,8\pi}(\phi_\ep)<-16\pi\log\pi-16\pi-4\pi(A_{x_1}-A_{x_2})-8\pi\log h_1(x_1)-8\pi\log h_2(x_2).
\end{align*}
This together with \eqref{lower2-bound} proves Theorem \ref{thm-2}. $\hfill{\square}$


\begin{thebibliography}{99}

\bibitem{B15} L. Battaglia et al., A general existence result for the Toda system on compact surfaces, Adv. Math. {\bf 285} (2015), 937--979.

\bibitem{BL24} Y. Bi and J.~Y. Li, The prescribed $Q$-curvature flow for arbitrary even dimension in a critical case, Adv. Math. {\bf 438} (2024), Paper No. 109478, 47 pp.

\bibitem{BM} H.~R. Brezis and F. Merle, Uniform estimates and blow-up behavior for solutions of $-\Delta u=V(x)e^u$ in two dimensions, Comm. Partial Differential Equations {\bf 16} (1991), no.~8-9, 1223--1253.

\bibitem{CL93}
K.-C. Chang and J. Liu, On Nirenberg's problem, Internat. J. Math. {\bf 4} (1993), no.~1, 35--58.

\bibitem{CY87}
S.-Y.~A. Chang and P.~C.-P. Yang, Prescribing Gaussian curvature on $S^2$, Acta Math. {\bf 159} (1987), no.~3-4, 215--259.

\bibitem{CY88}
S.-Y.~A. Chang and P.~C.-P. Yang, Conformal deformation of metrics on $S^2$, J. Differential Geom. {\bf 27} (1988), no.~2, 259--296.

\bibitem{CLin}
C.-C. Chen and C.-S. Lin, Sharp estimates for solutions of multi-bubbles in compact Riemann surfaces, Comm. Pure Appl. Math. {\bf 55} (2002), no.~6, 728--771.

\bibitem{CD88}
W. Chen and W.~Y. Ding, Scalar curvatures on $S^2$, Trans. Amer. Math. Soc. {\bf 303} (1987), no.~1, 365--382.

\bibitem{CL91b} W. Chen and C. Li, Prescribing Gaussian curvatures on surfaces with conical singularities, J. Geom. Anal. {\bf 1} (1991), no.~4, 359--372.

\bibitem{CLLX21}
X. Chen et al., On Gaussian curvature flow, J. Differential Equations {\bf 294} (2021), 178--250.

\bibitem{Ch} A.~J. Chorin, {\it Vorticity and turbulence}, Applied Mathematical Sciences, 103, Springer, New York, 1994.

\bibitem{DJLW97} W.~Y. Ding et al., The differential equation $\Delta u=8\pi-8\pi he^u$ on a compact Riemann surface, Asian J. Math. {\bf 1} (1997), no.~2, 230--248.

\bibitem{GT} D. Gilbarg and N.~S. Trudinger, {\it Elliptic partial differential equations of second order}, reprint of the 1998 edition,
Classics in Mathematics, Springer, Berlin, 2001.

\bibitem{H90}
Z.-C. Han, Prescribing Gaussian curvature on $S^2$, Duke Math. J. {\bf 61} (1990), no.~3, 679--703.

\bibitem{H86}
C.~W. Hong, A best constant and the Gaussian curvature, Proc. Amer. Math. Soc. {\bf 97} (1986), no.~4, 737--747.

\bibitem{J13}
A. Jevnikar, An existence result for the mean-field equation on compact surfaces in a doubly supercritical regime, Proc. Roy. Soc. Edinburgh Sect. A {\bf 143} (2013), no.~5, 1021--1045.

\bibitem{JM} G. Joyce and D. Montgomery, Negative temperature states for the two-dimensional guiding-centre plasma, J. Plasma Phys. {\bf 10} (1973), 107--121.

\bibitem{JW01} J. Jost and G. Wang, Analytic aspects of the Toda system. I. A Moser-Trudinger inequality, Comm. Pure Appl. Math. {\bf 54} (2001), no.~11, 1289--1319.

\bibitem{JLW06} J. Jost, C.-S. Lin and G. Wang, Analytic aspects of the Toda system. II. Bubbling behavior and existence of solutions, Comm. Pure Appl. Math. {\bf 59} (2006), no.~4, 526--558.

\bibitem{KW74}  J.~L. Kazdan and F.~W. Warner, Curvature functions for compact $2$-manifolds, Ann. of Math. (2) {\bf 99} (1974), 14--47.

\bibitem{LL05} J.~Y. Li and Y. Li, Solutions for Toda systems on Riemann surfaces, Ann. Sc. Norm. Super. Pisa Cl. Sci. (5) {\bf 4} (2005), no.~4, 703--728.

\bibitem{LLL12}
J.~Y. Li, Y. Li and P. Liu, The $Q$-curvature on a 4-dimensional Riemannian manifold $(M,g)$ with $\int_MQdV_g=8\pi^2$, Adv. Math. {\bf 231} (2012), no.~3-4, 2194--2223.

\bibitem{LZ19} J.~Y. Li and C. Zhu, The convergence of the mean field type flow at a critical case, Calc. Var. Partial Differential Equations {\bf 58} (2019), no.~2, Paper No. 60, 18 pp.

\bibitem{LX22} M. Li and X. Xu, A flow approach to mean field equation, Calc. Var. Partial Differential Equations {\bf 61} (2022), no.~4, Paper No. 143, 18 pp.

\bibitem{Lions} P.-L. Lions, {\it On Euler equations and statistical physics}, Cattedra Galileiana, Sc. Norm. Super., Pisa, 1998.

\bibitem{M71}
J.~K. Moser, A sharp form of an inequality by N. Trudinger, Indiana Univ. Math. J. {\bf 20} (1970/71), 1077--1092.

\bibitem{M73}
J.~K. Moser, On a nonlinear problem in differential geometry, in {\it Dynamical systems (Proc. Sympos., Univ. Bahia, Salvador, 1971)}, pp. 273--280, Academic Press, New York-London.

\bibitem{New} P.~K. Newton, {\it The $N$-vortex problem}, Applied Mathematical Sciences, 145, Springer, New York, 2001.

\bibitem{NT}
M. Nolasco and G. Tarantello, On a sharp Sobolev-type inequality on two-dimensional compact manifolds, Arch. Ration. Mech. Anal. {\bf 145} (1998), no.~2, 161--195.


\bibitem{OS}
H. Ohtsuka and T. Suzuki, Mean field equation for the equilibrium turbulence and a related functional inequality, Adv. Differential Equations {\bf 11} (2006), no.~3, 281--304.

\bibitem{PL} Y. B. Pointin and T. S. Lundgren, Statistical mechanics of two-dimensional vortices in a bounded container, Phys. Fluids {\bf 19} (1976), 1459--1470.


\bibitem{St05} M. Struwe, A flow approach to Nirenberg's problem, Duke Math. J. {\bf 128} (2005), no.~1, 19--64.

\bibitem{SZ21} L. Sun and J. Zhu, Global existence and convergence of a flow to Kazdan-Warner equation with non-negative prescribed function, Calc. Var. Partial Differential Equations {\bf 60} (2021), no.~1, Paper No. 42, 26 pp.

\bibitem{SZ24} L. Sun and J. Zhu, An existence result for the Kazdan-Warner equation with a sign-changing prescribed function, Calc. Var. Partial Differential Equations {\bf 63} (2024), no.~2, Paper No. 52, 16 pp.

\bibitem{SZhu24+} L. Sun and X. Zhu, Existence results for Toda systems with sign-changing prescribed functions: Part I, arXiv:2412.05578.

\bibitem{SZhu24++} L. Sun and X. Zhu, Existence results for Toda systems with sign-changing prescribed functions: Part II, arXiv:2412.07537.

\bibitem{WY22} Y. Wang and Y.~Y. Yang, A mean field type flow with sign-changing prescribed function on a symmetric Riemann surface, J. Funct. Anal. {\bf 282} (2022), no.~11, Paper No. 109449, 31 pp.

\bibitem{YZ17} Y.~Y. Yang and X. Zhu, A remark on a result of Ding-Jost-Li-Wang, Proc. Amer. Math. Soc. {\bf 145} (2017), no.~9, 3953--3959.

\bibitem{Zhou}
C. Zhou, Existence of solution for mean field equation for the equilibrium turbulence, Nonlinear Anal. {\bf 69} (2008), no.~8, 2541--2552.

\bibitem{Z24} X. Zhu, Another remark on a result of Ding-Jost-Li-Wang, Proc. Amer. Math. Soc. {\bf 152} (2024), no.~2, 639--651.

\end{thebibliography}
\end{document}